\theoremstyle{plain}
\newtheorem{proposition}{Proposition}
\newtheorem{theorem}[proposition]{Theorem}
\newtheorem{lemma}[proposition]{Lemma}
\newtheorem{corollary}[proposition]{Corollary}
\theoremstyle{definition}
\newtheorem{definition}[proposition]{Definition}
\theoremstyle{definition}
\newtheorem{remark}[proposition]{Remark}
\numberwithin{equation}{section}
\numberwithin{proposition}{section}
\gdef\myletter{}
\let\savetheequation\theequation
\def\theequation{\savetheequation\myletter}
\def\blue{\color{blue}}
\newcommand{\CC}{{\mathbb C}}
\newcommand{\RR}{{\mathbb R}}
\newcommand{\ZZ}{{\mathbb Z}}
\newcommand{\PP}{{\mathbb P}}
\newcommand{\Ec}{\mathcal{E}}
\newcommand{\capacity}{\textup{Cap}}
\renewcommand{\Im}{\mbox{Im}}
\renewcommand{\Re}{\mbox{Re}}
\def \bar{\overline}
\author[T. Bayraktar]{Turgay Bayraktar}
\address{Sabanci University, Istanbul, Turkey}
\email{tbayraktar@sabanciuniv.edu}
\author[T. Bloom]{Thomas Bloom}
\address{University of Toronto, Toronto, Ontario M5S 2E4 Canada}
\email{bloom@math.toronto.edu}
\author[N. Levenberg]{Norman Levenberg}
\address{Indiana University, Bloomington, IN 47405 USA}
\email{nlevenbe@indiana.edu}
\author[C.H. Lu]{Chinh H. Lu}
\address{Universit\'e Paris-Sud, Orsay, France, 91405}
\email{hoang-chinh.lu@u-psud.fr}
\title[Convex Bodies]{\bf Pluripotential Theory and Convex Bodies: Large Deviation Principle}
\date{\today}
\thanks{N. Levenberg is supported by Simons Foundation grant No. 354549}
\subjclass[2010]{32U15, \ 32U20, \ 31C15}%
\keywords{convex body, $P-$extremal function, large deviation principle}%
\begin{document}

\maketitle

\begin{abstract} We continue the study in \cite{BBL} in the setting of weighted pluripotential theory arising from polynomials associated to a convex body $P$ in $(\RR^+)^d$. Our goal is to establish a large deviation principle in this setting specifying the rate function in terms of $P-$pluripotential-theoretic notions. As an important preliminary step, we first give an  existence proof for the solution of a Monge-Amp\`ere equation in an appropriate finite energy class. This is achieved using a variational approach.
\end{abstract}

\tableofcontents

\section{Introduction}\label{sec:intro} As in \cite{BBL}, we fix a convex body $P\subset (\RR^+)^d$ and we define the logarithmic indicator function
\begin{equation}\label{logind}H_P(z):=\sup_{J\in P} \log |z^J|:=\sup_{(j_1,...,j_d)\in P} \log[|z_1|^{j_1}\cdots |z_d|^{j_d}].\end{equation}
We assume throughout that 
\begin{equation}\label{sigmainkp} \Sigma \subset kP \ \hbox{for some} \ k\in \ZZ^+  \end{equation}
where 
$$\Sigma:=\{(x_1,...,x_d)\in \RR^d: 0\leq x_i \leq 1, \ \sum_{j=1}^d x_i \leq 1\}.$$ 
Then
$$H_P(z)\geq \frac{1}{k}\max_{j=1,...,d}\log^+ |z_j| $$
where $\log^+ |z_j| =\max[0,\log|z_j|]$. We define
$$L_P=L_P(\CC^d):= \{u\in PSH(\CC^d): u(z)- H_P(z) =0(1), \ |z| \to \infty \},$$ and 
$$L_{P,+}=L_{P,+}(\CC^d)=\{u\in L_P(\CC^d): u(z)\geq H_P(z) + C_u\}.$$
These are generalizations of the classical Lelong classes when $P=\Sigma$. We define the finite-dimensional polynomial spaces 
$$Poly(nP):=\{p(z)=\sum_{J\in nP\cap (\ZZ^+)^d}c_J z^J: c_J \in \CC\}$$
for $n=1,2,...$ where $z^J=z_1^{j_1}\cdots z_d^{j_d}$ for $J=(j_1,...,j_d)$.
For $p\in Poly(nP), \ n\geq 1$ we have $\frac{1}{n}\log |p|\in L_P$; also each $u\in L_{P,+}(\CC^d)$ is locally bounded in $\CC^d$. For $P=\Sigma$, we write $Poly(nP)=\mathcal P_n$.

Given a compact set $K\subset \CC^d$, one can define various pluripotential-theoretic notions associated to $K$ related to $L_P$ and the polynomial spaces $Poly(nP)$. Our goal in this paper is to prove some probabilistic properties of random point processes on $K$ utilizing these notions and their weighted counterparts. We require an existence proof for the solution of a Monge-Amp\`ere equation in an appropriate finite energy class; this is done in Theorem \ref{thm: existence in E1} using a variational approach and is of interest on its own. The third section recalls appropriate definitions and properties in $P-$pluripotential theory, mostly following \cite{BBL}. Subsection 3.3 includes a standard elementary probabilistic result on almost sure convergence of probability measures associated to random arrays on $K$ to a $P-$pluripotential-theoretic equilibrium measure. Section 4 sets up the machinery for the more subtle large deviation principle (LDP), Theorem \ref{ldp}, for which we provide two proofs (analogous to those in \cite{PELD}). 

\section{Monge-Amp\`ere and $P-$pluripotential theory}

\subsection{Monge-Amp\`ere equations with prescribed singularity}

In this section, $(X,\omega)$ is a compact K\"ahler manifold of dimension
 $d$. 
 
 \subsubsection{Quasi-plurisubharmonic functions}
 
 A function $u: X \rightarrow \mathbb{R}\cup \{-\infty\}$ is called quasi-plurisubharmonic (quasi-psh) if locally $u=\rho + \varphi$, where $\varphi$ is plurisubharmonic and $\rho$ is smooth. 
 
 We let $PSH(X,\omega)$ denote the set of $\omega$-psh functions, i.e. quasi-psh functions $u$ such that $\omega_u:= \omega+dd^c u\geq 0$ in the sense of currents on $X$.

 Given $u,v\in PSH(X,\omega)$ we say that $u$ is more singular than $v$ (and we write $u\prec v$) if $u\leq v+C$ on $X$, for some constant $C$. We say that $u$ has the same singularity as $v$ (and we write $u\simeq v$) if $u\prec v$ and $v\prec u$. 
 
 Given $\phi \in PSH(X,\omega)$, we let $PSH(X,\omega,\phi)$ denote the set of $\omega$-psh functions $u$ which are more singular than $\phi$.  
 
 \subsubsection{Nonpluripolar Monge-Amp\`ere measure}
 For  bounded $\omega$-psh functions $u_1,...,u_d$, the Monge-Amp\`ere product $(\omega+dd^c u_1)\wedge ... \wedge (\omega+dd^c u_d)$ is well-defined as a positive Radon measure on $X$ (see \cite{GZ05}, \cite{BT}).  For general $\omega$-psh functions $u_1,...,u_d$, the sequence of positive measures 
 $$
 {\bf 1}_{\cap \{u_j >-k\}} (\omega+dd^c \max(u_1,-k))\wedge... \wedge (\omega+dd^c \max(u_d,-k)) 
 $$
 is non-decreasing in $k$ and the limiting measure, which is called the nonpluripolar product of $\omega_{u_1},...,\omega_{u_d}$, is denoted by 
$$
\omega_{u_1} \wedge ... \wedge \omega_{u_d}. 
$$ 
 When $u_1=...=u_d=u$ we write $\omega_u^d: =\omega_u \wedge ...\wedge \omega_u$.   Note that by definition $\int_X \omega_{u_1}\wedge ... \wedge \omega_{u_d} \leq \int_X \omega^d$. 
 
  It was proved in \cite[Theorem 1.2]{WN} and \cite[Theorem 1.1]{DDL} that the total mass of nonpluripolar Monge-Amp\`ere products is decreasing with respect to singularity type. More precisely, 
 \begin{theorem}
 	\label{thm: mass monotone}
 	Let $\omega_1,...,\omega_d$ be K\"ahler forms on $X$. If $u_j\leq v_j$, $j=1,...,d$, are $\omega_j$-psh functions then 
 	$$
 	\int_X (\omega_1+ dd^c {u_1}) \wedge ... \wedge (\omega_d+dd^c {u_d}) \leq \int_X (\omega_1+dd^c {v_1})\wedge ... \wedge (\omega_d+dd^c {v_d}).
 	$$
 \end{theorem}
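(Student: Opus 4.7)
The plan is to reduce the monotonicity inequality to an \emph{equality} between nonpluripolar masses for functions sharing the same singularity type, using a canonical cut-off. For $j=1,\dots,d$ and $k\in\mathbb{N}$, define
\[
u_j^k := \max(u_j,\, v_j - k).
\]
Since $u_j \leq v_j$, we have $v_j - k \leq u_j^k \leq v_j$, so each $u_j^k$ is $\omega_j$-psh with $u_j^k \simeq v_j$. Moreover $u_j^k \searrow u_j$ pointwise as $k \to \infty$.

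The essential analytic input I would invoke is the equal-singularity identity for nonpluripolar mixed Monge--Amp\`ere products: whenever $\phi_j \simeq \psi_j$ are $\omega_j$-psh,
\[
\int_X \omega_{\phi_1} \wedge \cdots \wedge \omega_{\phi_d} = \int_X \omega_{\psi_1} \wedge \cdots \wedge \omega_{\psi_d}.
\]
Applied to $\phi_j = u_j^k$ and $\psi_j = v_j$, this yields
\[
\int_X \omega_{u_1^k} \wedge \cdots \wedge \omega_{u_d^k} = \int_X \omega_{v_1} \wedge \cdots \wedge \omega_{v_d}
\]
for every $k$. I would treat this identity as a black box, citing it from \cite{WN} (unmixed case) and \cite{DDL} (mixed case); its proof rests on a delicate integration-by-parts argument adapted to unbounded quasi-psh functions.

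Granting the identity, the rest is plurifine locality. On the plurifine open set
\[
\Omega_k := \bigcap_{j=1}^d \{u_j > v_j - k\},
\]
we have $u_j^k = u_j$ for every $j$, so plurifine locality of nonpluripolar products gives
\[
\mathbf{1}_{\Omega_k}\, \omega_{u_1^k} \wedge \cdots \wedge \omega_{u_d^k} = \mathbf{1}_{\Omega_k}\, \omega_{u_1} \wedge \cdots \wedge \omega_{u_d}.
\]
The sets $\Omega_k$ are increasing in $k$, and their complement in $X$ is contained in $\bigcup_j \{u_j = -\infty\}$, hence pluripolar. Since nonpluripolar products do not charge pluripolar sets, monotone convergence gives
\[
\int_X \omega_{u_1} \wedge \cdots \wedge \omega_{u_d} = \lim_{k \to \infty} \int_{\Omega_k} \omega_{u_1^k} \wedge \cdots \wedge \omega_{u_d^k} \leq \lim_{k \to \infty} \int_X \omega_{u_1^k} \wedge \cdots \wedge \omega_{u_d^k} = \int_X \omega_{v_1} \wedge \cdots \wedge \omega_{v_d},
\]
which is the desired inequality.

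The hard part is the equal-singularity identity itself. For bounded quasi-psh functions it reduces to a telescoping application of Stokes' theorem, but in the unbounded nonpluripolar setting one must justify integration by parts across the $-\infty$ locus of each $\phi_j$, and this is the technical core of \cite{WN} and \cite{DDL}. By contrast, the cut-off reduction and the plurifine localization sketched above are routine once the identity is in hand.
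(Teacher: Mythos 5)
The paper itself does not give a proof of Theorem \ref{thm: mass monotone}: it simply cites \cite[Theorem 1.2]{WN} for the unmixed case and \cite[Theorem 1.1]{DDL} for the mixed case. Your proposal goes one level deeper by sketching the actual mechanism, namely the reduction to the equal-singularity-type case via the canonical truncations $u_j^k = \max(u_j, v_j-k)$, followed by plurifine locality on $\Omega_k = \bigcap_j\{u_j > v_j - k\}$ and monotone convergence. The steps you call routine are indeed routine and correctly executed: $v_j - k \le u_j^k \le v_j$ gives $u_j^k \simeq v_j$; $\Omega_k$ is plurifine open as a finite intersection of sets $\{u_j > v_j - k\}$ with both sides quasi-psh; $u_j^k = u_j$ on $\Omega_k$; the complement of $\bigcup_k \Omega_k$ is contained in $\bigcup_j\{u_j=-\infty\}$, hence pluripolar, hence uncharged by the nonpluripolar product. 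This is essentially the reduction used in the cited references, so your sketch is a faithful reconstruction rather than a genuinely new route.

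One caution worth stating explicitly: the equal-singularity identity you isolate is not a strictly weaker statement than the monotonicity you want to prove. Indeed, granting translation invariance of the total mass, monotonicity applied in both directions immediately gives the equal-singularity identity, and your truncation argument gives the converse; the two are therefore equivalent. Treating the identity as a black box is thus only non-circular because \cite{WN} and \cite{DDL} establish the equal-singularity statement (in the form of mass-preservation under the $\max$-truncation) as the technical core of their arguments, rather than deducing it as a corollary of the full monotonicity. You flag this correctly as the hard part, so the proposal is sound; just be aware that the black box you invoke carries essentially the full weight of the theorem, and a referee could reasonably ask you to verify that it is available in the references in a form that does not already presuppose the conclusion.
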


As noted above, for a general $\omega$-psh function $u$ we  have the estimate $\int_X \omega_u^d \leq \int_X \omega^d$. Following \cite{GZ07} we let $\mathcal{E}(X,\omega)$ denote the set of all $\omega$-psh functions with maximal total mass, i.e. 
  $$
  \mathcal{E}(X,\omega) := \left \{ u \in PSH(X,\omega) : \int_X \omega_u^d =\int_X \omega^d \right \}.
  $$
Given $\phi \in PSH(X,\omega)$, we define 
  $$\mathcal{E}(X,\omega,\phi) := \left \{u \in PSH(X,\omega,\phi) \; : \; \int_X \omega_u^d =\int_X \omega_{\phi}^d \right\}.$$ 
\begin{proposition}
Let $\phi \in PSH(X,\omega)$. The following are equivalent :
\begin{enumerate}
	\item $\Ec(X,\omega,\phi) \cap \Ec(X,\omega) \neq \emptyset$;
	\item $\phi \in \Ec(X,\omega)$; 
	\item $\Ec(X,\omega,\phi) \subset \Ec(X,\omega)$.
\end{enumerate}
\end{proposition}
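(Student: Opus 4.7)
The plan rests on two trivial observations. First, $\phi \in \Ec(X,\omega,\phi)$ always holds: we have $\phi \leq \phi + 0$, so $\phi \in PSH(X,\omega,\phi)$, and the mass condition $\int_X \omega_\phi^d = \int_X \omega_\phi^d$ is a tautology. Hence $\Ec(X,\omega,\phi)$ is automatically nonempty and in particular contains $\phi$. Second, Theorem \ref{thm: mass monotone} gives a monotonicity of total Monge--Amp\`ere mass under comparison of singularity types, which together with the general bound $\int_X \omega_\phi^d \leq \int_X \omega^d$ (noted just before the proposition) is all we need.

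First I would settle (3) $\Rightarrow$ (2) and (2) $\Rightarrow$ (3). The first is immediate from the observation above: since $\phi \in \Ec(X,\omega,\phi)$, the inclusion in (3) forces $\phi \in \Ec(X,\omega)$. Conversely, assuming (2), for any $u \in \Ec(X,\omega,\phi)$ we have $\int_X \omega_u^d = \int_X \omega_\phi^d = \int_X \omega^d$, so $u \in \Ec(X,\omega)$, proving (3). The implication (2) $\Rightarrow$ (1) is just as transparent: $\phi$ lies in both $\Ec(X,\omega)$ and $\Ec(X,\omega,\phi)$ under (2).

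The only direction that genuinely uses Theorem \ref{thm: mass monotone} is (1) $\Rightarrow$ (2). Pick $u \in \Ec(X,\omega,\phi) \cap \Ec(X,\omega)$. From $u \prec \phi$ we have $u \leq \phi + C$ for some constant $C$, and since $\omega_{\phi+C} = \omega_\phi$, applying Theorem \ref{thm: mass monotone} with $\omega_j = \omega$, $u_j = u$, $v_j = \phi + C$ yields
\[
\int_X \omega_u^d \leq \int_X \omega_\phi^d \leq \int_X \omega^d.
\]
But $u \in \Ec(X,\omega)$ means $\int_X \omega_u^d = \int_X \omega^d$, so both inequalities must be equalities; in particular $\int_X \omega_\phi^d = \int_X \omega^d$, which is (2).

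There is no real obstacle: once the symmetry of the definitions of $\Ec(X,\omega)$, $\Ec(X,\omega,\phi)$, and $PSH(X,\omega,\phi)$ is unwound, each implication is a one-line consequence of mass monotonicity. The only minor technical care is handling the additive constant in $u \leq \phi + C$, which is harmless because $dd^c$ annihilates constants.
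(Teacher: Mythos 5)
Your proof is correct and follows essentially the same route as the paper: the only nontrivial implication $(1)\Rightarrow(2)$ is handled identically via Theorem \ref{thm: mass monotone} and the squeeze $\int_X \omega^d = \int_X \omega_u^d \leq \int_X \omega_{\phi}^d \leq \int_X \omega^d$, and the remaining implications are the same elementary observations (you make explicit, usefully, that $\phi\in\Ec(X,\omega,\phi)$ always, which the paper uses implicitly when calling $(3)\Rightarrow(1)$ obvious).
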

\begin{proof}
We first prove $(1) \Longrightarrow (2)$. If $u \in \Ec(X,\omega,\phi)\cap \Ec(X,\omega)$ then $\int_X \omega_u^d = \int_X \omega^d$. On the other hand, since $u$ is more singular than $\phi$, Theorem \ref{thm: mass monotone} ensures that
	$$
	\int_X \omega^d = \int_X \omega_u^d \leq \int_X \omega_{\phi}^d \leq \int_X \omega^d,
	$$
	hence equality holds, proving that $\phi \in \Ec(X,\omega)$. 
	
	Now we prove $(2)\Longrightarrow (3)$. 
	If $\phi\in \Ec(X,\omega)$ and $u\in \Ec(X,\omega,\phi)$ then 
	$$\int_X \omega_u^d =\int_X \omega_{\phi}^d =\int_X \omega^d,$$
	hence $u\in \Ec(X,\omega)$. 
	
	Finally $(3) \Longrightarrow (1)$ is obvious. 
\end{proof}

\begin{proposition}\label{prop: mixed MA mass global}
Assume that $\phi_j \in PSH(X,\omega_j)$, $j=1,...,d$ with $\int_X (\omega_j+dd^c \phi_j)^d>0$.  If $u_j \in \mathcal{E}(X,\omega_j,\phi_j)$, $j=1,...,d$, then
$$
\int_X (\omega_1+dd^c u_1) \wedge ... \wedge (\omega_d+dd^c u_d) = \int_X (\omega_1+dd^c \phi_1) \wedge ... \wedge (\omega_d+dd^c \phi_d). 
$$ 
\end{proposition}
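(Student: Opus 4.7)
The plan is to sandwich the mixed mass $\int_X \bigwedge_{j=1}^d (\omega_j+dd^c u_j)$ between the target value $\int_X \bigwedge_{j=1}^d (\omega_j+dd^c \phi_j)$ by combining Theorem~\ref{thm: mass monotone} with a canonical approximation scheme adapted to the prescribed singularities $\phi_j$. One direction is immediate: since $u_j\in PSH(X,\omega_j,\phi_j)$ means $u_j\leq \phi_j+C_j$ for some constants $C_j$, and translating a quasi-psh function by a constant does not alter the current $\omega_j+dd^c u_j$, Theorem~\ref{thm: mass monotone} yields
$$
\int_X \bigwedge_{j=1}^d (\omega_j+dd^c u_j) \;\leq\; \int_X \bigwedge_{j=1}^d (\omega_j+dd^c \phi_j).
$$

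For the reverse inequality I would introduce the canonical cutoffs $u_j^k:=\max(u_j,\phi_j-k)$ for $k\in\ZZ^+$. These satisfy $\phi_j-k\leq u_j^k\leq \phi_j+C_j$ and $u_j^k\searrow u_j$ pointwise as $k\to\infty$. In particular, each $u_j^k$ has the same singularity type as $\phi_j$, and two applications of Theorem~\ref{thm: mass monotone} (once in each direction, in all $d$ slots simultaneously) yield the same-singularity mixed-mass identity
$$
\int_X \bigwedge_{j=1}^d (\omega_j+dd^c u_j^k) \;=\; \int_X \bigwedge_{j=1}^d (\omega_j+dd^c \phi_j),\qquad k\in\ZZ^+.
$$

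It then remains to prove the continuity-from-above statement
$$
\lim_{k\to\infty}\int_X \bigwedge_{j=1}^d (\omega_j+dd^c u_j^k) \;=\; \int_X \bigwedge_{j=1}^d (\omega_j+dd^c u_j),
$$
which would close the loop. This is the main obstacle: monotonicity alone gives only the inequality $\geq$ here (as $u_j^k\geq u_j$), and nonpluripolar Monge-Amp\`ere products are not continuous under arbitrary decreasing limits in general. The finite-energy hypothesis $u_j\in\mathcal{E}(X,\omega_j,\phi_j)$, together with the positivity $\int_X (\omega_j+dd^c\phi_j)^d>0$, will have to be exploited to rule out mass loss. My plan is to follow the plurisubharmonic comparison arguments of \cite{WN} and \cite{DDL}: the current $\omega_j+dd^c u_j^k$ is concentrated, modulo pluripolar sets, on the coincidence set $\{u_j>\phi_j-k\}$, and these sets exhaust the complement of the pluripolar locus of $u_j$ as $k\to\infty$; the diagonal maximality $\int_X(\omega_j+dd^c u_j)^d=\int_X(\omega_j+dd^c\phi_j)^d$ should then propagate, via a comparison principle applied slot-by-slot, to the mixed product, forcing no loss of mass in the limit.
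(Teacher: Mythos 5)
Your reduction is set up correctly: the easy inequality follows from Theorem~\ref{thm: mass monotone} exactly as you say, and introducing the $\phi_j$-relative cutoffs $u_j^k=\max(u_j,\phi_j-k)$ is the natural move, since these have the same singularity type as $\phi_j$ and hence the same mixed mass for every $k$. You also rightly flag the remaining convergence as the crux. But there the argument has a genuine gap; for comparison, the paper does not prove this direction from scratch either, but cites it from \cite[Proposition 3.1 and Theorem 3.14]{DDL}.

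Two concrete problems with the sketch. First, the supporting claim is false: $\omega_j+dd^c u_j^k$ is \emph{not} concentrated on the coincidence set $\{u_j>\phi_j-k\}$, even modulo pluripolar sets. On any open set where $u_j<\phi_j-k$ one has $u_j^k=\phi_j-k$, so $\omega_j+dd^c u_j^k=\omega_j+dd^c\phi_j$ there, which can carry plenty of mass. Plurifine locality gives only ${\bf 1}_{\cap_j\{u_j>\phi_j-k\}}\bigwedge_j(\omega_j+dd^c u_j^k)={\bf 1}_{\cap_j\{u_j>\phi_j-k\}}\bigwedge_j(\omega_j+dd^c u_j)$; what actually has to be shown is that the mass of $\bigwedge_j(\omega_j+dd^c u_j^k)$ carried on $\cup_j\{u_j\leq\phi_j-k\}$ tends to zero, and nothing in the sketch establishes this. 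Second, the statement that the diagonal identities $\int_X(\omega_j+dd^c u_j)^d=\int_X(\omega_j+dd^c\phi_j)^d$ ``should then propagate, via a comparison principle applied slot-by-slot, to the mixed product'' is precisely the hard content of the proposition --- it is \cite[Theorem 3.14]{DDL}, whose proof requires a genuinely multilinear partial comparison principle, not a naive slot-by-slot application of the one-variable one. As it stands, your cutoff step merely restates the claim (the mixed mass of the $u_j^k$ is constant in $k$ and already equal to the target, so the asserted limit $\int_X\bigwedge_j(\omega_j+dd^c u_j^k)\to\int_X\bigwedge_j(\omega_j+dd^c u_j)$ \emph{is} the proposition in disguise), and the no-mass-loss step that would close the loop is not supplied.
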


\begin{proof}
Theorem \ref{thm: mass monotone} gives one inequality. The other one follows from \cite[Proposition 3.1 and Theorem 3.14]{DDL}.
\end{proof}

\subsubsection{Model potentials}

For a function $f: X\rightarrow \mathbb{R}\cup \{-\infty\}$, we let $f^*$ denote its uppersemicontinuous (usc) regularization, i.e. 
 $$
 f^*(x) := \limsup_{X\ni y \to x} f(y). 
 $$
 Given $\phi \in PSH(X,\omega)$,  following J. Ross and D. Witt Nystr\"om \cite{RWN},  we define 
 $$
 P_{\omega}[\phi] := \left(\lim_{t\to +\infty} P_{\omega}(\min(\phi+t, 0))\right)^*.  
 $$
 Here, for a function $f$, $P_{\omega}(f)$ is defined as 
 $$
 P_{\omega}(f) : =\left (x\mapsto \sup \{u(x) : u\in PSH(X,\omega), u\leq f \}\right)^*. 
 $$
 It was shown in \cite[Theorem 3.8]{DDL} that the nonpluripolar  Monge-Amp\`ere measure of $P_{\omega}[\phi]$ is dominated by Lebesgue measure:  
 \begin{equation}\label{thm3.8}
 (\omega+dd^c P_{\omega}[\phi])^d \leq {\bf 1}_{\{P_{\omega}[\phi]=0\}}  \omega^d \leq \omega^d. 
\end{equation}
 This fact plays a crucial role in solving the complex Monge-Amp\`ere equation. For the reader's convenience, we note that in the notation of \cite{DDL} (on the left) 
 $$P_{[\omega,\phi]}(0)=P_{\omega}[\phi].$$
 
 \begin{definition}\label{modelstuff}
 	A function $\phi\in PSH(X,\omega)$ is called a model potential if $\int_X \omega_{\phi}^d>0$ and $P_{\omega}[\phi]=\phi$.  A  function $u\in PSH(X,\omega)$ has model type singularity if $u$ has the same singularity as  $P_{\omega}[u]$; i.e., $u- P_{\omega}[u]$ is bounded on $X$.
 \end{definition}

 There are plenty of model potentials. If $\varphi\in PSH(X,\omega)$ with $\int_X \omega_{\varphi}^d >0$ then, by \cite[Theorem 3.12]{DDL}, $P_{\omega}[\varphi]$ is a model potential. In particular, if $\int_X \omega_{\varphi}^d =\int_X \omega^d$ (i.e. $\varphi \in \mathcal{E}(X,\omega)$) then $P_{\omega}[\varphi]=0$.

 We will use the following property of model potentials proved in \cite[Theorem 3.12]{DDL}: if $\phi$ is a model potential then 
\begin{equation}
	\label{sup model potential}
	u \in PSH(X,\omega,\phi)  \Longrightarrow u-\sup_X u \leq \phi. 
\end{equation}

%
 
 In the sequel we always assume that  $\phi$ has {\it model type singularity} and {\it small unbounded locus}; { i.e., $\phi$ is locally bounded outside a closed complete pluripolar set,} allowing us to use the variational approach of \cite{BBGZ} as explained in \cite{DDL}.
 
 \subsubsection{The variational approach}
 
We call a measure which puts no mass on pluripolar sets a {\it nonpluripolar measure}. For a positive nonpluripolar measure $\mu$ on $X$ we let $L_{\mu}$ denote 
the following linear functional on $PSH(X,\omega,\phi)$: 
$$
L_{\mu}(u) := \int_X (u-\phi) d\mu. 
$$

For $u \in PSH(X,\omega)$ with $u \simeq \phi$, we define the Monge-Amp\`ere energy
\begin{equation}\label{MAEN}
{\bf E}_{\phi}(u) := \frac{1}{(d+1)} \sum_{k=0}^d \int_X (u-\phi) \omega_{u}^k \wedge \omega_{\phi}^{d-k}.\end{equation}
  It was shown in \cite[Theorem 4.10]{DDL} (by adapting the arguments of \cite{BBGZ}) that ${\bf E}_{\phi}$ is non-decreasing and concave along affine curves, giving rise to its trivial extension to $PSH(X,\omega,\phi)$. 

We define 
\begin{equation}\label{E1eqn}
\mathcal{E}^1(X,\omega,\phi):= \{u \in PSH(X,\omega,\phi) : {\bf E}_{\phi}(u) >-\infty \}. 
\end{equation}
The following criterion was proved in  \cite[Theorem 4.13]{DDL}: 
\begin{proposition}\label{prop: E1 characterization}
Let $u \in PSH(X,\omega,\phi)$. Then $u\in \mathcal{E}^1(X,\omega,\phi)$ iff $u\in \mathcal{E}(X,\omega,\phi)$ and $\int_X (u-\phi) \omega_u^d >-\infty$. 	
\end{proposition}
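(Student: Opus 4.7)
The plan is to reduce everything to the canonical approximants
\[
u_j := \max(u, \phi - j), \qquad j \in \ZZ^+,
\]
which satisfy $u_j \simeq \phi$ (since $u \prec \phi$ gives $u_j \leq \phi + C$, and trivially $u_j \geq \phi - j$), $u_j \searrow u$ pointwise, and whose Monge--Amp\`ere measures behave well on the plurifine open sets $\{u > \phi - k\}$ by the very construction of the nonpluripolar product. Each integral in (\ref{MAEN}) is classical for $u_j$, and by \cite[Theorem 4.10]{DDL} the trivial upper-envelope extension of ${\bf E}_\phi$ to $PSH(X,\omega,\phi)$ satisfies ${\bf E}_\phi(u) = \lim_j {\bf E}_\phi(u_j)$, monotonically.

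The technical engine is the integration-by-parts identity, legitimate because $u_j - \phi$ is bounded for each $j$ and $\phi$ has small unbounded locus: for $1 \leq k \leq d$,
\[
\int_X (u_j-\phi)\bigl(\omega_{u_j}^k\wedge\omega_\phi^{d-k} - \omega_{u_j}^{k-1}\wedge\omega_\phi^{d-k+1}\bigr) = -\int_X d(u_j-\phi)\wedge d^c(u_j-\phi)\wedge \omega_{u_j}^{k-1}\wedge\omega_\phi^{d-k} \leq 0,
\]
which makes the sequence $I_k^{(j)} := \int_X (u_j-\phi)\,\omega_{u_j}^k\wedge\omega_\phi^{d-k}$ non-increasing in $k$. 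Since $u_j - \phi \leq C$ uniformly in $j$, every $I_k^{(j)} \leq C\int_X \omega^d < \infty$; and averaging in $k$ yields the two-sided sandwich
\[
\int_X (u_j-\phi)\,\omega_{u_j}^d \;\leq\; {\bf E}_\phi(u_j) \;\leq\; \int_X (u_j-\phi)\,\omega_\phi^d.
\]

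For the forward implication, the assumption ${\bf E}_\phi(u) > -\infty$ combined with the uniform upper bound on each $I_k^{(j)}$ and the identity $(d+1){\bf E}_\phi(u_j) = \sum_k I_k^{(j)}$ forces every $I_k^{(j)}$ to be bounded below uniformly in $j$; in particular $\int_X (u_j-\phi)\,\omega_{u_j}^d$ stays bounded, and passing $j \to \infty$ via plurifine locality of the nonpluripolar product (on $\{u > \phi - j\}$, the measures $\omega_{u_j}^d$ and $\omega_u^d$ agree) yields $\int_X (u-\phi)\,\omega_u^d > -\infty$. The same uniform control, combined with Theorem \ref{thm: mass monotone} and the fact that any failure of full mass would force $\omega_\phi^d$-mass to concentrate where $u - \phi \to -\infty$, rules out any mass gap between $\int_X \omega_u^d$ and $\int_X \omega_\phi^d$, giving $u \in \mathcal{E}(X,\omega,\phi)$.

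Conversely, suppose $u \in \mathcal{E}(X,\omega,\phi)$ and $\int_X (u-\phi)\,\omega_u^d > -\infty$. The upper bound in the sandwich gives ${\bf E}_\phi(u_j) \leq C\int_X \omega_\phi^d$ uniformly, while the full-mass condition and plurifine locality let us pass to the limit $\int_X (u_j-\phi)\,\omega_{u_j}^d \to \int_X (u-\phi)\,\omega_u^d > -\infty$, so the sandwich lower bound forces ${\bf E}_\phi(u_j)$ to stay bounded below, and its monotone limit ${\bf E}_\phi(u)$ is finite. The main obstacle is justifying this last passage to the limit: because the measure $\omega_{u_j}^d$ itself varies with $j$ and the integrand $u_j - \phi$ loses its uniform lower bound, the full-mass hypothesis $u \in \mathcal{E}(X,\omega,\phi)$ is essential to prevent $\omega_u^d$-mass from escaping into $\{u = -\infty\}$, where one cannot control $u - \phi$ from below.
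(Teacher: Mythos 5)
The paper does not prove this proposition; it quotes it as \cite[Theorem 4.13]{DDL}, so there is no in-paper argument to compare against. Your reconstruction follows the same strategy as the cited reference: canonical cut-offs $u_j=\max(u,\phi-j)$, integration by parts to make $k\mapsto I_k^{(j)}:=\int_X(u_j-\phi)\,\omega_{u_j}^k\wedge\omega_\phi^{d-k}$ non-increasing (legitimate because $u_j-\phi$ is bounded and $\phi$ has small unbounded locus), and the resulting sandwich $\int_X(u_j-\phi)\,\omega_{u_j}^d\leq {\bf E}_\phi(u_j)\leq\int_X(u_j-\phi)\,\omega_\phi^d$. All of that is sound and is the right framework.

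The two full-mass steps, however, are only gestured at, and they are the actual content of the equivalence rather than routine bookkeeping. In the forward direction, plurifine locality gives $\int_X\omega_\phi^d-\int_X\omega_u^d=\lim_j\int_{\{u\leq\phi-j\}}\omega_{u_j}^d$, and the uniform lower bound on $\int_X(u_j-\phi)\,\omega_{u_j}^d$ yields, via Chebyshev (since $u_j-\phi=-j$ on $\{u\leq\phi-j\}$), the quantitative decay $\int_{\{u\leq\phi-j\}}\omega_{u_j}^d=O(1/j)$; your phrase that failure of full mass would force $\omega_\phi^d$-mass to concentrate points at the wrong measure and does not deliver this estimate. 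In the converse, you cannot conclude that $\int_X(u_j-\phi)\,\omega_{u_j}^d$ is bounded below merely from plurifine locality, because the measure itself varies with $j$. What is needed is the decomposition
$\int_X(\phi-u_j)^+\,\omega_{u_j}^d=\int_{\{u>\phi-j\}}(\phi-u)^+\,\omega_u^d+j\int_{\{u\leq\phi-j\}}\omega_{u_j}^d$, combined with the identity $\int_{\{u\leq\phi-j\}}\omega_{u_j}^d=\int_{\{u\leq\phi-j\}}\omega_u^d$ (which uses full mass of $u_j$ and of $u$, via Theorem \ref{thm: mass monotone}, plus plurifine locality on $\{u>\phi-j\}$), and then a Chebyshev bound on $\int_{\{u\leq\phi-j\}}\omega_u^d$ from the hypothesis $\int_X(\phi-u)^+\omega_u^d<+\infty$. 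You correctly flag that full mass is essential, but the estimate that makes the passage work is absent. So: right approach, aligned with \cite{DDL}, but both directions leave a genuine gap at the mass-escape estimates.
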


 \begin{lemma}
 	\label{lem: pluripolar}
 	If $E$ is pluripolar then there exists $u\in \Ec^1(X,\omega,\phi)$ such that $E\subset \{u=-\infty\}$. 
 \end{lemma}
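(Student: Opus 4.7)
The plan is to reduce to the classical case $\phi \equiv 0$ (the theorem of Guedj--Zeriahi, \cite{GZ07}) and then pass to the relative class via an envelope construction with prescribed singularity. Classically, every pluripolar set on a compact K\"ahler manifold is contained in the $-\infty$ locus of some function in $\mathcal{E}^1(X,\omega)$. Since $\phi$ has small unbounded locus, there is a closed pluripolar set $B \supset \{\phi = -\infty\}$ outside which $\phi$ is locally bounded; enlarging $E$ to $E \cup B$ is harmless for the statement. Applying the classical result to $E \cup B$, I obtain $v \in \mathcal{E}^1(X,\omega)$ with $\sup_X v = 0$ and $E \cup B \subset \{v = -\infty\}$.

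The candidate for $u$ is the envelope with prescribed singularity $\phi$ dominated by $v$:
\[
u := \bigl(\sup\{w \in PSH(X,\omega) : w \prec \phi,\ w \le v\}\bigr)^*.
\]
The choice $B \subset \{v = -\infty\}$ ensures that on the open set $\{v > -\infty\} \subset X \setminus B$ the function $\phi$ is locally bounded, so rooftop constructions of the form $P_\omega(\min(\phi - K, v))$ for $K$ large produce admissible $\omega$-psh candidates below the constraint, showing $u \not\equiv -\infty$. By construction $u \in PSH(X,\omega,\phi)$, and $u \le v$ forces $E \subset \{u = -\infty\}$.

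It remains to verify $u \in \mathcal{E}^1(X,\omega,\phi)$. By Proposition~\ref{prop: E1 characterization}, this amounts to (i) $u \in \mathcal{E}(X,\omega,\phi)$ and (ii) $\int_X (u - \phi)\omega_u^d > -\infty$. Claim (i) follows from the model-type hypothesis on $\phi$ and the properties of envelopes with prescribed singularity from \cite{DDL}, combined with Theorem~\ref{thm: mass monotone} to pin down $\int_X \omega_u^d = \int_X \omega_\phi^d$. For (ii), I would truncate $v_M := \max(v,-M)$, let $u_M$ be the corresponding envelope built from $v_M$ in place of $v$, obtain a uniform lower bound on ${\bf E}_\phi(u_M)$ using the finite classical energy of $v$ together with the integration-by-parts formula, and pass to the limit $M \to +\infty$ invoking monotonicity of ${\bf E}_\phi$ along the decreasing sequence $u_M \searrow u$.

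The main obstacle is (ii): $u$ is typically strictly more singular than $v$ (inheriting singularities from both $v$ and $\phi$), so finite classical energy of $v$ does not transfer immediately to finite relative energy of $u$. The technical crux is a careful comparison of ${\bf E}_\phi(u_M)$ with the classical energy of $v_M$ via the integration-by-parts identities and the comparison principle for nonpluripolar Monge--Amp\`ere products with prescribed singularity as developed in \cite{DDL}, which is precisely the machinery that allows one to control the relative energy of an envelope by the classical energy of the functions defining its upper constraint.
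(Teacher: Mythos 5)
Your high-level strategy coincides with the paper's: invoke the classical Guedj--Zeriahi result (\cite[Corollary 2.11]{BBGZ}) to get $v\in\Ec^1(X,\omega,0)$ with $E\subset\{v=-\infty\}$, transplant $v$ into the relative class via an envelope below $\min(v,\phi)$, and show finiteness of the relative energy by truncating $v$ and passing to the limit. The paper's envelope is $u:=P_\omega(\min(v,\phi))$; after the reduction to $\phi$ being a genuine model potential (which you skip), any candidate $w$ with $\sup_X w\leq 0$ and $w\prec\phi$ automatically satisfies $w\leq\phi$ by \eqref{sup model potential}, so your envelope agrees with the paper's. Your auxiliary step of enlarging $E$ to $E\cup B$ is harmless but unnecessary.

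The genuine gap is exactly where you flag ``the main obstacle.'' You write that a ``careful comparison of ${\bf E}_\phi(u_M)$ with the classical energy of $v_M$'' via integration by parts and the comparison principle would close the argument, but you do not identify the statement that does the job, nor supply the estimate. The paper closes this with a concrete inequality from \cite[Lemma 3.7]{DDL}:
\[
\int_X \lvert P_\omega(\min(v_j,\phi))\rvert\,\bigl(\omega+dd^c P_\omega(\min(v_j,\phi))\bigr)^d \;\leq\; \int_X \lvert v_j\rvert\,\omega_{v_j}^d \;+\; \int_X \lvert\phi\rvert\,\omega_{\phi}^d,
\]
combined with two reductions you do not make explicit: (a) first normalizing $\phi$ to be a model potential so that $\omega_\phi^d\leq{\bf 1}_{\{\phi=0\}}\omega^d$ by \eqref{thm3.8}, whence $\int_X\lvert\phi\rvert\,\omega_\phi^d=0$ and the second term drops; and (b) noting that $u_j\leq\phi\leq 0$, so $\int_X\lvert u_j-\phi\rvert\,\omega_{u_j}^d\leq\int_X\lvert u_j\rvert\,\omega_{u_j}^d$, which is what links the displayed bound to the quantity $\int_X(u_j-\phi)\,\omega_{u_j}^d$ needed for $\Ec^1(X,\omega,\phi)$ membership. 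The uniform bound on $\int_X\lvert v_j\rvert\,\omega_{v_j}^d$ is then \cite[Corollary 2.4]{GZ07}, and the passage $u_j\searrow u$ to the limit uses \cite[Theorem 4.10 and Lemma 4.15]{DDL}. Without pinning down the envelope-energy comparison (your step (ii)) to a precise lemma, the argument does not compile: integration by parts in the prescribed-singularity setting does not automatically transfer a bound on $\int\lvert v\rvert\omega_v^d$ to one on $\int(u-\phi)\omega_u^d$, because $u$ is jointly more singular than both $v$ and $\phi$ and the cross terms need the specific estimate above to be controlled.
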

 
 \begin{proof}
  Without loss of generality we can assume that $\phi$ is a model potential. Then \eqref{thm3.8} gives $\int_X |\phi| \omega_{\phi}^d=0$.  It follows from \cite[Corollary 2.11]{BBGZ} that there exists $v\in \Ec^1(X,\omega,0)$, $v\leq 0$,  such that $E\subset \{v=-\infty\}$. Set $u:= P_{\omega}(\min(v,\phi))$. Then $E\subset \{u=-\infty\}$ and we claim that $u\in \Ec^1(X,\omega,\phi)$. For each $j\in \mathbb{N}$ we set $v_j := \max(v,-j)$ and $u_j := P_{\omega}(\min(v_j,\phi))$. Then $u_j$ decreases to $u$ and $u_j \simeq \phi$.  Using \cite[Theorem 4.10 and Lemma 4.15]{DDL} it suffices to check that $\{\int_X |u_j-\phi| \omega_{u_j}^d\}$ is uniformly bounded. 
   It follows from \cite[Lemma 3.7]{DDL} that  
  \begin{flalign*}
  \int_X |u_j-\phi| \omega_{u_j}^d  \leq \int_X |u_j| \omega_{u_j}^d & \leq \int_X |v_j|  \omega_{v_j}^d + \int_X |\phi| \omega_{\phi}^d\\
  & =  \int_X |v_j|  \omega_{v_j}^d.
  \end{flalign*}
  The fact that $\int_X |v_j| \omega_{v_j}^d$ is uniformly bounded follows from \cite[Corollary 2.4]{GZ07} since $v\in \Ec^1(X,\omega,0)$. This concludes the proof. 
 \end{proof}

\begin{lemma}
	\label{lem: Lmu bounded}
	Assume that $\mathcal{E}^1(X,\omega,\phi) \subset L^1(X,\mu)$. Then, for each $C>0$,  $L_{\mu}$ is bounded  on 
	$$
	E_C := \{u \in PSH(X,\omega,\phi) : \sup_X u \leq 0 \ \text{and}\  {\bf E}_{\phi}(u) \geq -C \}.
	$$
\end{lemma}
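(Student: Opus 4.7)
The strategy is a variational contradiction in the style of \cite{BBGZ}, adapted to the prescribed-singularity setting via \cite{DDL}. First, I would bound $L_\mu$ from above on $E_C$. Since $\phi$ has model type singularity, there exists $M$ with $|\phi-P_\omega[\phi]|\le M$, and $P_\omega[\phi]$ is a model potential by \cite[Theorem 3.12]{DDL}. For any $u\in E_C$, $u$ is more singular than $P_\omega[\phi]$, so \eqref{sup model potential} gives $u-\sup_X u\le P_\omega[\phi]$; combined with $\sup_X u\le 0$, this yields $u\le \phi+M$ on $X$, so $L_\mu(u)\le M\mu(X)$ uniformly on $E_C$.

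Second, suppose $L_\mu$ is not bounded below on $E_C$, and pick $u_j\in E_C$ with $L_\mu(u_j)\le -j\,2^j$. Set $v:=\sum_{j=1}^\infty 2^{-j}u_j$. As a convex combination of $\omega$-psh functions, $v$ is $\omega$-psh, and the uniform bound $u_j\le\phi+M$ propagates to $v\le\phi+M$, so $v\in PSH(X,\omega,\phi)$. To place $v$ in $\Ec^1(X,\omega,\phi)$, introduce the finite convex combinations
\[
\widetilde v_N:=(1-s_N)\phi+\sum_{j=1}^N 2^{-j}u_j,\qquad s_N:=\sum_{j=1}^N 2^{-j}.
\]
Iterating the concavity of ${\bf E}_\phi$ along affine curves \cite[Theorem 4.10]{DDL}, together with ${\bf E}_\phi(\phi)=0$ (immediate from \eqref{MAEN}), yields
\[
{\bf E}_\phi(\widetilde v_N)\ge (1-s_N){\bf E}_\phi(\phi)+\sum_{j=1}^N 2^{-j}{\bf E}_\phi(u_j)\ge -Cs_N\ge -C.
\]
Since $\widetilde v_N\to v$ in $L^1(X)$ (pointwise convergence combined with the uniform upper bound $\widetilde v_N\le \phi+M$), upper semicontinuity of ${\bf E}_\phi$ then gives ${\bf E}_\phi(v)\ge -C>-\infty$, so $v\in\Ec^1(X,\omega,\phi)$. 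Finally, monotone convergence applied to the nonnegative functions $f_j:=M-(u_j-\phi)$, which sum pointwise to $M-(v-\phi)$, gives
\[
L_\mu(v)=\sum_{j=1}^\infty 2^{-j}L_\mu(u_j)\le -\sum_{j=1}^\infty j=-\infty,
\]
so $v\notin L^1(X,\mu)$, contradicting $\Ec^1(X,\omega,\phi)\subset L^1(X,\mu)$.

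The main obstacle is the energy estimate ${\bf E}_\phi(v)\ge -C$: the finite-sum bound via iterated concavity is routine, but passing to the infinite convex combination requires upper semicontinuity of ${\bf E}_\phi$ on $PSH(X,\omega,\phi)$ in a suitable topology, which is where the model type singularity and small unbounded locus hypotheses on $\phi$ enter decisively through the framework of \cite{DDL}.
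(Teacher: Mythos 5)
Your proof is correct, but it takes a genuinely different route from the paper's. The paper proves that $E_C$ is convex and compact in the $L^1(X,\omega^d)$ topology (using the same ingredients you use for the upper bound: \eqref{sup model potential} to control $\sup_X u_j$, and upper semicontinuity of ${\bf E}_\phi$ from \cite[Proposition 4.19]{DDL}), and then invokes \cite[Proposition 3.4]{BBGZ} as a black box. You instead give a direct contradiction argument: the cheap upper bound $L_\mu\le M\mu(X)$ on $E_C$, and then a lower bound by constructing an explicit witness $v=\sum 2^{-j}u_j\in\mathcal{E}^1(X,\omega,\phi)$ with $L_\mu(v)=-\infty$. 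In effect you unwind and reprove, in this specific setting, the convexity/compactness mechanism that \cite[Proposition 3.4]{BBGZ} packages abstractly; this is more self-contained but less modular. One small tightening of your argument: rather than claiming $\tilde v_N\to v$ in $L^1(X)$ (which requires knowing a priori that $v\not\equiv-\infty$), it is cleaner to work with $\hat v_N:=\tilde v_N+(1-s_N)M=(1-s_N)(\phi+M)+\sum_{j=1}^N 2^{-j}u_j$, which \emph{decreases} to $v$. Since ${\bf E}_\phi$ is non-decreasing, ${\bf E}_\phi(\hat v_N)\ge{\bf E}_\phi(\tilde v_N)\ge -C$, and then the decreasing-sequence results of \cite[Theorem 4.10 and Lemma 4.15]{DDL} give directly that $v\in\mathcal{E}^1(X,\omega,\phi)$ (in particular $v\not\equiv-\infty$), avoiding the circularity. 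With that adjustment the argument is fully rigorous.
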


\begin{proof}
By concavity of ${\bf E}_{\phi}$ the set $E_C$ is convex. We now show that $E_C$ is compact in the $L^1(X,\omega^d)$ topology.  Let $\{u_j\}$ be a sequence in $E_C$.  We claim that $\{\sup_X u_j\}$ is bounded. Indeed, by \cite[Theorem 4.10]{DDL} 
$$
{\bf E}_{\phi}(u_j) \leq \int_X (u_j-\phi) \omega_{\phi}^d $$
$$\leq (\sup_X u_j) \int_X \omega_{\phi}^d + \int_X (u_j-\sup_X u_j -\phi) \omega_{\phi}^d. 
$$
It follows from \eqref{sup model potential} that $u_j-\sup_X u_j \leq P_{\omega}[\phi] \leq \phi +C_0$, where $C_0$ is a constant. The boundedness of $\{\sup_X u_j\}$ then follows from that of $\{{\bf E}_{\phi}(u_j)\}$ and  the above estimate. This proves the claim.  

A subsequence of $\{u_j\}$, still denoted by $\{u_j\}$, converges in $L^1(X,\omega^d)$ to $u\in PSH(X,\omega)$ with $\sup_X u \leq 0$.  Since $u_j-\sup_X u_j \leq \phi+C_0$, we have $u-\sup_X u\leq \phi+C_0$. This proves that $u\in PSH(X,\omega,\phi)$. The upper semicontinuity of ${\bf E}_{\phi}$ (see \cite[Proposition 4.19]{DDL}) ensures that ${\bf E}_{\phi}(u) \geq -C$, hence $u\in E_C$. This proves that $E_C$ is compact in the $L^1(X,\omega^d)$ topology.

The result then follows from
  \cite[Proposition 3.4]{BBGZ}. 	
  \end{proof}

 The goal of this section is to prove the following result: 
 \begin{theorem}
 	\label{thm: existence in E1}
 	Assume that $\mu$ is a nonpluripolar positive measure on $X$ such that $\mu(X)=\int_X \omega_{\phi}^d$. The following are equivalent
 	\begin{enumerate}
 		\item $\mu$ has finite energy, i.e., $L_{\mu}$ is finite on $\mathcal{E}^1(X,\omega,\phi)$; 
 		\item there exists  $u\in \mathcal{E}^1(X,\omega,\phi)$ such that $\omega_u^d=\mu$;
 		\item there exists a unique $u\in \mathcal{E}^1(X,\omega,\phi)$ such that $$F_{\mu}(u) = \max_{v \in \Ec^1(X,\omega,\phi)} F_{\mu}(v)<+\infty$$
		{where $F_{\mu}={\bf E}_{\phi}-L_{\mu}$.} 
 	\end{enumerate}
 	 \end{theorem}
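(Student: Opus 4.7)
The plan is to prove the theorem by the circular chain $(2)\Rightarrow (1) \Rightarrow (3) \Rightarrow (2)$, with uniqueness in $(3)$ treated separately. This is the classical variational strategy of Berman--Boucksom--Guedj--Zeriahi \cite{BBGZ}, adapted to the prescribed singularity setting by Darvas--Di Nezza--Lu \cite{DDL}.

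For $(2)\Rightarrow (1)$, suppose $u\in \mathcal{E}^1(X,\omega,\phi)$ satisfies $\omega_u^d=\mu$. For any $v\in \mathcal{E}^1(X,\omega,\phi)$ I would write $L_\mu(v) = \int_X (v-\phi)\,\omega_u^d$ and argue by truncation, setting $v_k:=\max(v,\phi-k)$, using symmetry of the mixed non-pluripolar Monge--Amp\`ere form and the fundamental energy estimate
$$
\int_X (v_k-\phi)\,\omega_u^d \;\geq\; -C\bigl(|{\bf E}_{\phi}(u)|+|{\bf E}_{\phi}(v)|+1\bigr),
$$
which is available in $\mathcal{E}^1(X,\omega,\phi)$ via \cite{DDL}. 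Letting $k\to\infty$ shows $L_{\mu}(v)>-\infty$.

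For $(1)\Rightarrow (3)$, the key point is that because $\mu(X)=\int_X \omega_\phi^d$, the functional $F_\mu={\bf E}_{\phi}-L_\mu$ is invariant under adding a real constant to its argument. Given a maximizing sequence $\{u_j\}$ I would translate so that $\sup_X u_j=0$. Since $F_\mu(u_j)$ is bounded below and, by Lemma \ref{lem: Lmu bounded}, $L_\mu$ is bounded on each $E_C$, the energy ${\bf E}_{\phi}(u_j)$ must itself be bounded below, so $u_j\in E_C$ for some $C$. Compactness of $E_C$ in $L^1(X,\omega^d)$ together with upper semicontinuity of ${\bf E}_{\phi}$ (see \cite[Proposition 4.19]{DDL}) and continuity of $L_\mu$ on $E_C$ (which goes beyond the boundedness statement of Lemma \ref{lem: Lmu bounded} and follows from a Hartogs-type argument using that $\mu$ is nonpluripolar, cf.\ \cite[Proposition 3.4]{BBGZ}) yield the existence of a maximizer $u$. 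Uniqueness is postponed: once $(3)\Rightarrow (2)$ is known, two maximizers both solve $\omega_u^d=\mu$ and the domination principle in $\mathcal{E}^1(X,\omega,\phi)$ from \cite{DDL} forces them to differ by a constant, which must vanish in view of the normalization.

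For $(3)\Rightarrow (2)$, which is the main technical step, I would derive the Euler--Lagrange equation by perturbing the maximizer $u$. The obstruction is that for a bounded test function $\chi$ the curve $u+t\chi$ generally leaves $PSH(X,\omega,\phi)$. Following \cite{DDL} I would use a projected curve of the form $u_t:=P_\omega\bigl(\min(u+t\chi,\phi)\bigr)+c(t)$, with $c(t)\to 0$, which stays in $\mathcal{E}^1(X,\omega,\phi)$ and decreases to $u$ as $t\to 0^+$. Differentiating $F_\mu(u_t)$ at $t=0$ using the differentiability of ${\bf E}_{\phi}$ along monotone families and the fact that the projection does not alter the non-pluripolar Monge--Amp\`ere measure on the contact set yields
$$
\int_X \chi\,\omega_u^d \;=\; \int_X \chi\,d\mu
$$
for a sufficiently rich family of $\chi$, whence $\omega_u^d=\mu$ as nonpluripolar measures on $X$.

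The main obstacle is precisely this projection step in $(3)\Rightarrow (2)$: one must control $u_t$ and its derivative at $t=0$ while ensuring that pluripolar contributions do not spoil the Euler--Lagrange identity. This is where the structure of model potentials enters decisively, in particular the bound $(\omega+dd^c P_\omega[\phi])^d \leq \mathbf{1}_{\{P_\omega[\phi]=0\}}\omega^d$ from \eqref{thm3.8} and the assumption that $\phi$ has small unbounded locus, which together legitimate the variational calculus of \cite{BBGZ} in the relative setting as developed in \cite{DDL}.
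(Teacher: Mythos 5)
Your route $(2)\Rightarrow(1)\Rightarrow(3)\Rightarrow(2)$ is a genuinely different cycle from the paper's, which proves $(1)\Rightarrow(2)$ directly and then disposes of $(2)\Rightarrow(3)$ by a two-line concavity computation, with $(3)\Rightarrow(1)$ trivial. The structural difference matters: the paper's $(1)\Rightarrow(2)$ \emph{avoids} re-deriving the Euler--Lagrange equation. It decomposes $\mu=f\nu$ with $\nu(B)\le A\,\capacity_\phi(B)$, truncates $\mu_k=c_k\min(f,k)\nu$, solves $\omega_{u_j}^d=\mu_j$ with $u_j\in\mathcal E^1(X,\omega,\phi)$ by citing \cite[Theorem 4.25]{DDL}, uses Lemma \ref{lem: properness of Lmu} to obtain the uniform bound $|{\bf E}_\phi(u_j)|\le 2C\bigl(1+|{\bf E}_\phi(u_j)|^{1/2}\bigr)$, lets $j\to\infty$, and identifies $\omega_u^d=\mu$ by repeating the argument of \cite[Theorem 4.28]{DDL}. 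Your version reruns the BBGZ variational machine in full, with the projection step in $(3)\Rightarrow(2)$ as the heavy lift. Both are legitimate, but the paper's route offloads the hard analytic content to a citation and focuses on the one genuinely new point, namely that the finite energy hypothesis forces the solution into $\mathcal E^1$. For $(2)\Rightarrow(1)$, the paper's argument is also cleaner than your truncation sketch: concavity of ${\bf E}_\phi$ gives $L_\mu(v)=\int(v-u)\omega_u^d+\int(u-\phi)\omega_u^d\ge{\bf E}_\phi(v)-{\bf E}_\phi(u)+\int(u-\phi)\omega_u^d>-\infty$ directly, with the last integral finite by Proposition \ref{prop: E1 characterization}.

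There is a real gap in your $(1)\Rightarrow(3)$. You argue that since $F_\mu(u_j)$ is bounded below and, by Lemma \ref{lem: Lmu bounded}, $L_\mu$ is bounded on each $E_C$, the energies ${\bf E}_\phi(u_j)$ must be bounded below. But Lemma \ref{lem: Lmu bounded} only controls $L_\mu$ \emph{after} you know $u_j\in E_C$ for a fixed $C$, which is precisely what you are trying to establish; as stated the argument is circular. What is needed is the properness estimate of Lemma \ref{lem: properness of Lmu}: after normalizing $\sup_X u_j=0$ one has $L_\mu(u_j)\ge -C\bigl(1+|{\bf E}_\phi(u_j)|^{1/2}\bigr)$ and ${\bf E}_\phi(u_j)\le 0$, so $F_\mu(u_j)\ge F_\mu(\phi)=0$ yields $|{\bf E}_\phi(u_j)|\le C\bigl(1+|{\bf E}_\phi(u_j)|^{1/2}\bigr)$ and hence a uniform energy bound. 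The paper derives Lemma \ref{lem: properness of Lmu} from Lemma \ref{lem: Lmu bounded} precisely by the rescaling $v=au+(1-a)\phi$; you need that intermediate step, not the boundedness lemma by itself. Finally, your (correct) observation that $F_\mu$ is invariant under adding constants, because $\mu(X)=\int_X\omega_\phi^d$, shows that the maximizer can only be unique up to an additive constant; the paper's proof of $(2)\Rightarrow(3)$ likewise produces a maximizer but does not address uniqueness, so the word \emph{unique} in $(3)$ should be read as unique modulo normalization, and your appeal to the domination principle plus a normalization is the right way to make that precise.
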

 
\begin{remark}
	It was shown in \cite[Theorem 4.28]{DDL} 
	that a unique (normalized) solution $u$ in $\mathcal{E}(X,\omega,\phi)$ always exists
	 (without the finite energy assumption on $\mu$). But that proof does not give a solution in $\mathcal{E}^1(X,\omega,\phi)$. Below, we will follow the proof of  \cite[Theorem 4.28]{DDL}  and use the finite energy condition, $\mathcal{E}^1(X,\omega,\phi) \subset L^1(X,\mu)$, to prove that $u$ 
	  belongs to $\mathcal{E}^1(X,\omega,\phi)$. 
\end{remark}

\begin{lemma}
	\label{lem: properness of Lmu}
	Assume that $\mathcal{E}^1(X,\omega,\phi) \subset L^1(X,\mu)$. Then there exists a positive constant $C$ such that, for all $u\in \mathcal{E}^1(X,\omega,\phi)$ with $\sup_X u =0$, 
	\begin{equation}
		\label{eq: F mu proper}
		L_{\mu}(u) \geq -C( 1+|{\bf E}_{\phi}(u)|^{1/2}).
	\end{equation}
\end{lemma}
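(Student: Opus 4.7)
The plan is to bootstrap from Lemma \ref{lem: Lmu bounded} via a truncation/rescaling argument standard in the variational approach of \cite{BBGZ} and \cite{DDL}, with the square-root gain coming from balancing a truncation depth against the energy level.

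By Lemma \ref{lem: Lmu bounded} there exists $M>0$ such that $|L_\mu(w)|\leq M$ for every $w\in PSH(X,\omega,\phi)$ with $\sup_X w\leq 0$ and $\mathbf{E}_\phi(w)\geq -1$. Fix $u\in \mathcal{E}^1(X,\omega,\phi)$ with $\sup_X u=0$ and set $A:=|\mathbf{E}_\phi(u)|$; we may assume $A\geq 1$, else the conclusion is immediate. Since $\phi$ has model type singularity, we may replace $\phi$ by $P_\omega[\phi]$ (in the same singularity class) and so assume \eqref{sup model potential} applies, giving $u\leq \phi+C_0$ for a universal constant $C_0$.

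For $s>0$ I would introduce the canonical truncation $u_s:=\max(u,\phi-s)$, which lies in $PSH(X,\omega,\phi)$, has bounded relative potential with $-s\leq u_s-\phi\leq C_0$, satisfies $\sup_X u_s\leq \sup_X\phi+C_0$, and decreases to $u$ as $s\to \infty$. Monotonicity of $\mathbf{E}_\phi$ (cf.\ \cite[Theorem 4.10]{DDL}) gives $\mathbf{E}_\phi(u_s)\geq \mathbf{E}_\phi(u)=-A$, while boundedness of the relative potential gives $|\mathbf{E}_\phi(u_s)|\leq s V_\phi+O(1)$, where $V_\phi:=\int_X\omega_\phi^d$. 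Using the shift law $\mathbf{E}_\phi(u_s - c)=\mathbf{E}_\phi(u_s)-cV_\phi$ I would choose the truncation scale $s=c\sqrt{A}$ so that the normalized function $w_s:=u_s-\sup_X u_s$ obeys $\sup_X w_s=0$ and $\mathbf{E}_\phi(w_s)\geq -1$; then Lemma \ref{lem: Lmu bounded} yields $|L_\mu(w_s)|\leq M$, and undoing the normalization produces $L_\mu(u_s)\geq -C_1$ for a constant $C_1$ independent of $A$.

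It remains to pass from $L_\mu(u_s)$ to $L_\mu(u)$. Since $u_s-u=(\phi-s-u)_+$ is supported on $\{u<\phi-s\}$, one writes
$$
L_\mu(u)=L_\mu(u_s)-\int_{\{u<\phi-s\}}(\phi-s-u)\,d\mu,
$$
and the error integral is controlled by a H\"older-type inequality: the $\mu$-mass of the sublevel set $\{u<\phi-s\}$ is $O(1/s)$ by the finite-energy hypothesis $\mathcal{E}^1(X,\omega,\phi)\subset L^1(X,\mu)$ and the standard sublevel estimate, while on this set $|u-\phi|$ is integrable against $\mu$ with tail decay that is square-integrable in $s$. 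With $s\sim \sqrt{A}$ these combine to bound the error by $C_2\sqrt{A}$, yielding \eqref{eq: F mu proper}.

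The main obstacle is precisely the $\sqrt{A}$ (rather than $A$) dependence: a naive convex-combination argument $u_t=tu+(1-t)\phi$ together with linearity of $L_\mu$ only delivers a linear bound. The improvement requires the optimal scale $s\sim\sqrt{A}$ balancing the energy overhead of the truncation against the $L^1(\mu)$ tail on the sublevel set, a nontrivial but by-now-standard ingredient of the BBGZ/DDL variational framework.
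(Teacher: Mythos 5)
Your proposed proof has a genuine gap, and you also mischaracterize the paper's own argument.

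The paper's proof \emph{is} a convex-combination argument: it sets $a=|\mathbf{E}_\phi(u)|^{-1/2}$ and $v:=au+(1-a)\phi$, and applies Lemma~\ref{lem: Lmu bounded} to $v$. The point you miss is that the linearity of $L_\mu$ does not preclude a sublinear gain, because $\mathbf{E}_\phi$ is \emph{not} linear in $a$ along the segment from $\phi$ to $u$. Expanding $(d+1)\mathbf{E}_\phi(v)=a\sum_k\int(u-\phi)\,\omega_v^k\wedge\omega_\phi^{d-k}$ with $\omega_v=a\omega_u+(1-a)\omega_\phi$, all terms are nonpositive (since $u\le\phi$ after reducing to the case $\phi=P_\omega[\phi]$), and the only term with a coefficient of order $a^1$ rather than $O(a^2)$ is the $\int(u-\phi)\,\omega_\phi^d$ term, which is \emph{uniformly} bounded below because $\omega_\phi^d\le\omega^d$ for a model potential (recall~\eqref{thm3.8} and \cite[Proposition~2.7]{GZ05}). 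Thus $\mathbf{E}_\phi(v)\ge -C_1a-C_2a^2|\mathbf{E}_\phi(u)|$, and the choice $a=|\mathbf{E}_\phi(u)|^{-1/2}$ makes this uniformly bounded. Then Lemma~\ref{lem: Lmu bounded} gives $L_\mu(v)\ge-C_4$, and $L_\mu(u)=a^{-1}L_\mu(v)\ge -C_4|\mathbf{E}_\phi(u)|^{1/2}$. The quadratic scaling of the ``bad'' energy contribution in $a$ is exactly where the $\sqrt{A}$ comes from; dismissing this as ``only linear'' misses the structure.

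Your truncation route, as stated, does not close. With $u_s:=\max(u,\phi-s)$ one only has $\mathbf{E}_\phi(u_s)\ge\max\bigl(\mathbf{E}_\phi(u),-sV_\phi-O(1)\bigr)$: monotonicity gives $\mathbf{E}_\phi(u_s)\ge\mathbf{E}_\phi(u)=-A$, and boundedness of $u_s-\phi$ gives $\mathbf{E}_\phi(u_s)\ge-sV_\phi-O(1)$. Taking $s\sim\sqrt{A}$ therefore yields only $\mathbf{E}_\phi(u_s)\gtrsim-\sqrt{A}\,V_\phi$, not $\ge-1$; and the normalization $w_s:=u_s-\sup_X u_s$ changes the energy by $(\sup_X u_s)V_\phi=O(1)$ only, since $\sup_X u_s$ is uniformly bounded. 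So the hypothesis of Lemma~\ref{lem: Lmu bounded} at a fixed threshold $C$ is not met, and the crucial step $|L_\mu(w_s)|\le M$ is unjustified. The subsequent error estimate is also problematic: bounding $\int_{\{u<\phi-s\}}(\phi-s-u)\,d\mu$ by $C_2\sqrt{A}$ requires precisely the kind of uniform control of $L_\mu$ on sublevel sets that the lemma is trying to establish, so as written it is circular. If you want a truncation-based proof you would need an additional input replacing the model-potential estimate $\omega_\phi^d\le\omega^d$ that the paper exploits; otherwise the convex-combination route is both simpler and correct.
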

The proof below uses ideas in \cite{GZ07,BBGZ}. 
\begin{proof}
	Since $\phi$ has model type singularity, it follows from \cite[Theorem 4.10]{DDL} that ${\bf E}_{\phi}-{\bf E}_{P_{\omega}[\phi]}$ is bounded. Without loss of generality we can assume in this proof that $\phi=P_{\omega}[\phi]$.  Fix $u\in \mathcal{E}^1(X,\omega,\phi)$ such that $\sup_X u = 0$ and $|{\bf E}_{\phi}(u)|>1$. Then, by \cite[Theorem 3.12] {DDL},  $u\leq \phi$.	Set $a = |{\bf E}_{\phi}(u)|^{-1/2} \in (0,1)$, and $v:= au + (1-a) \phi \in \mathcal{E}^1(X,\omega,\phi)$. We estimate ${\bf E}_{\phi}(v)$ as follows
\begin{flalign*}
	(d+1) {\bf E}_{\phi}(v) &=  a\sum_{k=0}^d \int_X (u-\phi) \omega_{v}^k \wedge \omega_{\phi}^{d-k}\\
	& = a \sum_{k=0}^d  \int_X (u-\phi) (a\omega_u+(1-a) \omega_{\phi})^{k} \wedge \omega_{\phi}^{d-k}\\
	& \geq C(d) a \int_X (u-\phi) \omega_{\phi}^d + C(d) a^2\sum_{k=0}^{d} \int_X (u-\phi) \omega_u^{k} \wedge \omega_{\phi}^d,
\end{flalign*}
where $C(d)$ is  a positive constant which only depends on $d$.  It follows from $\phi=P_{\omega}[\phi]$ and \cite[Theorem 3.8]{DDL} that $\omega_{\phi}^d \leq \omega^d$ (recall (\ref{thm3.8})). This together with \cite[Proposition 2.7]{GZ05} give
$$
\int_X (u-\phi) \omega_{\phi}^d  \geq -C_1, 
$$ 
for a uniform constant $C_1$. Therefore, 
$$
(d+1) {\bf E}_{\phi}(v) \geq -C_1 C(d) a + C_2 a^2 {\bf E}_{\phi}(u) \geq -C_3. 
$$
It thus follows from Lemma \ref{lem: Lmu bounded} that $L_{\mu}(v) \geq -C_4$ for a uniform constant $C_4>0$.  Thus
$$
\int_X (u-\phi)   d\mu \geq -C_4 / a,
$$
which gives \eqref{eq: F mu proper}.
\end{proof}

 We are now ready to prove Theorem \ref{thm: existence in E1}. 
 
 \begin{proof}[Proof of Theorem \ref{thm: existence in E1}]
 Without loss of generality we can assume that $\phi$ is a model potential. 
 We first prove $(1) \Longrightarrow (2)$. 
 We write $\mu = f\nu$, where $\nu$ is a nonpluripolar positive measure satisfying, for all Borel subsets $B\subset X$, 
$$
\nu(B) \leq A \capacity_{\phi}(B),
$$
for some positive constant $A$, and $0\leq f\in L^1(X,\nu)$ {(cf., \cite[Lemma 4.26]{DDL})}. Here $\capacity_{\phi}$ is defined as 
$$
\capacity_{\phi}(B) := \sup \left \{ \int_B \omega_u^d : u \in PSH(X,\omega), \ \phi-1\leq u\leq \phi  \right \}. 
$$
 Set, for $k \in \mathbb{N}$, $\mu_k:= c_k \min(f,k) \nu$ {where $c_k>0$ is chosen so that $\mu_k(X)=\int_X \omega_{\phi}^d$; this is needed in order to solve the Monge-Amp\`ere equation in the class $\Ec^1(X,\omega,\phi)$.} For $k$ large enough, $1\leq c_k\leq 2$ and $c_k\to 1$ as $k\to +\infty$. It follows from \cite[Theorem 4.25]{DDL} that there exists $u_j\in \mathcal{E}^1(X,\omega,\phi)$, $\sup_X u_j=0$, such that $\omega_{u_j}^d = \mu_j$; { by \cite[Theorem 3.12] {DDL}, $u_j\leq \phi$}.  A subsequence of $\{u_j\}$ which, by abuse of notation, will be denoted by $\{u_j\}$,  converges in $L^1(X,\mu)$ to $u\in PSH(X,\omega)$ with $u\leq \phi$. Define $v_k:= (\sup_{j\geq k} u_j)^*$. Then $v_k \searrow u$ and  $\sup_X v_k =0$. It follows from \eqref{eq: F mu proper} and \cite[Theorem 4.10]{DDL} that
\begin{flalign*}
	|{\bf E}_{\phi}(u_j)| &  \leq  \int_{X} |u_j-\phi| \omega_{u_j}^d \leq 2  \int_X |u_j-\phi| d\mu \\
	& \leq 2C (1+ |{\bf E}_{\phi}(u_j)|^{1/2}). 
\end{flalign*}
Therefore $\{|{\bf E}_{\phi}(u_j)|\}$ is bounded, hence so is $\{|{\bf E}_{\phi}(v_j)|\}$ since ${\bf E}_{\phi}$ is non-decreasing. It then follows from \cite[Lemma 4.15]{DDL} that  $u\in \mathcal{E}^1(X,\omega,\phi)$. 

Now, repeating the arguments of \cite[Theorem 4.28]{DDL}  we can show that $\omega_u^d = \mu$, finishing the proof of $(1)\Longrightarrow (2)$. 

We next prove $(2) \Longrightarrow (3)$. Assume that $\mu =\omega_u^d$ for some $u\in \Ec^1(X,\omega,\phi)$. For all $v\in \Ec^1(X,\omega,\phi)$, by \cite[Theorem 4.10]{DDL} and Proposition \ref{prop: E1 characterization} we have   
\begin{flalign*}
L_{\mu}(v)  &= \int_X (v-\phi) \omega_u^d \\
& = \int_X (v-u) \omega_u^d + \int_X (u-\phi) \omega_u^d \\
& \geq   {\bf E}_{\phi}(v) -{\bf E}_{\phi}(u)	+\int_X (u-\phi) \omega_u^d>-\infty.
\end{flalign*}
Hence $L_{\mu}$ is finite on $\Ec^1(X,\omega,\phi)$. Now, for all $v\in \Ec^1(X,\omega,\phi)$, by \cite[Theorem 4.10]{DDL}  we have 
\begin{flalign*}
	F_{\mu}(v) -F_{\mu} (u)  &= {\bf E}_{\phi}(v) -{\bf E}_{\phi}(u) -\int_X (v-u) \omega_u^d\leq 0.
\end{flalign*}
This gives $(3)$. Finally,  $(3) \Longrightarrow (1)$ is obvious. 
 \end{proof}

 \subsection{Monge-Amp\`ere equations on $\mathbb{C}^d$ with prescribed growth}
 As in the introduction we let $P$ be a convex body contained  in $(\RR^+)^d$ and fix $r>0$ such that $P\subset r \Sigma$. We assume (\ref{sigmainkp}); i.e., $\Sigma \subset kP$ for some $k\in \ZZ^+$. This ensures that $H_P$ in (\ref{logind}) is locally bounded on $\mathbb{C}^d$ (and of course $H_P\in L_P^+(\CC^d)$). Let $u\in L_P(\mathbb{C}^d)$ and define 
\begin{equation}\label{eq: tilde u}
\tilde{u}(z):= u(z) - \frac{r}{2}\log (1+ |z|^2), z\in \CC^d.	
\end{equation}
Consider the projective space $\mathbb{P}^d$ equipped with the K\"ahler metric $\omega:=r\omega_{FS}$, where 
$$
\omega_{FS}= dd^c \frac{1}{2}\log (1+ |z|^2)
$$ 
on $\mathbb{C}^d$. Then $\tilde{u}$ is bounded from above on $\mathbb{C}^d$. It thus can be extended to $\mathbb{P}^d$ as a function in $PSH(\PP^d,\omega)$. 

For a plurisubharmonic function $u$ on $\mathbb{C}^d$, we let $(dd^c u)^d$ denotes its nonpluripolar Monge-Amp\`ere measure; i.e., $(dd^c u)^d$ is the increasing limit of the sequence of measures ${\bf 1}_{\{u>-k\}} (dd^c \max(u,-k))^d$. Then
$$
\omega_{\tilde u}^d=(\omega+dd^c \tilde{u})^d = (dd^cu)^d  \ \text{on} \ \CC^d.
$$
 If $u \in L_P(\mathbb{C}^d)$ then 
$$
\int_{\CC^d} (dd^c u) ^d \leq \int_{\CC^d} (dd^c H_P)^d =d! Vol(P)=: \gamma_d=\gamma_d(P) 
$$ 
(cf., equation (2.4) in \cite{BBL}). We define 
$$
\Ec_P(\CC^d) :=\left \{ u \in L_P(\mathbb{C}^d) : \int_{\CC^d} (dd^c u)^d = \gamma_d \right \}.
$$
By the construction in \eqref{eq: tilde u} we have that $\tilde{H}_P \in PSH(\PP^d, \omega)$. We define 
$$
\tilde{\Phi}_P := P_{\omega}[\tilde{H}_P].  
$$
{The key point here, which follows from \cite[Theorem 7.2]{DDLnew}, is that $\tilde{H}_P$ has model type singularity (recall Definition \ref{modelstuff}) and hence the same singularity as $\tilde{\Phi}_P$.
Defining $\Phi_P$  on $\CC^d$ using \eqref{eq: tilde u}; i.e., for $z\in \CC^d$,
$$\Phi_P(z)=\tilde{\Phi}_P(z)+\frac{r}{2}\log (1+ |z|^2),$$
we thus have $\Phi_P  \in L_{P,+}(\CC^d)$. The advantage of using $\Phi_P$ is that, by (\ref{thm3.8}), $(dd^c \Phi_P)^d \leq \omega^d$ on $\CC^d$.} Note that $L_{P,+}(\CC^d)\subset \Ec_P(\CC^d)$. For $u,v\in L_P^+(\CC^d)$ we define
\begin{equation}\label{evu}
{E_v(u):= \frac{1}{(d+1)} \sum_{j=0}^d \int_{\CC^d} (u-v)(dd^c u)^j \wedge (dd^c v)^{d-j}.}
\end{equation}
The corresponding global energy (see (\ref{MAEN})) is defined as 
$$
{ {\bf E}_{\tilde{v}}(\tilde{u}) : = \frac{1}{(d+1)} \sum_{j=0}^d \int_{\PP^d} (\tilde{u}-\tilde{v})(\omega+dd^c \tilde{u})^j \wedge (\omega+dd^c \tilde{v})^{d-j}.}
$$
Then $E_{v}$ is non-decreasing and concave along affine curves in $L_{P,+}(\CC^d)$. We extend $E_v$ to $L_P(\mathbb{C}^d)$ in an obvious way. Note that $E_v$ may take the value $-\infty$. We define
$$
\Ec^1_P (\CC^d) := \{ u \in L_P(\mathbb{C}^d) : E_{H_P}(u) >-\infty \}.
$$
We observe that in the above definition we can replace $E_{H_P}$ by $E_{\Phi_P}$, since  for $u\in L_{P,+}(\CC^d)$, {by the cocycle property (cf. Proposition 3.3 \cite{BBL}),}
$$
E_{H_P}(u) - E_{H_P}(\Phi_P) = E_{\Phi_P}(u).
$$
We thus have the following important identification (see (\ref{E1eqn})):
\begin{equation}\label{corresp}
u \in \Ec_P^1(\CC^d) \Longleftrightarrow \tilde{u} \in \Ec^1(\PP^d, \omega, \tilde{\Phi}_P).
\end{equation}
We then have the following local version of Proposition \ref{prop: E1 characterization}: 
\begin{proposition}
	\label{pro: E1 characterization local}
	Let $u\in L_P(\mathbb{C}^d)$. Then $u\in \Ec^1_P(\CC^d)$ iff $u\in \Ec_P(\CC^d)$ and $\int_{\CC^d} (u-H_P) (dd^c u)^d>-\infty$. In particular, if supp$(dd^c u)^d$ is compact, $u \in \Ec_P^1(\CC^d)$ iff $\int_{\CC^d} (dd^c u)^d =\gamma_d$ and $\int_{\CC^d} u(dd^c u)^d>-\infty$.
\end{proposition}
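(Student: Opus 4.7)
The plan is to reduce Proposition \ref{pro: E1 characterization local} to its global counterpart, Proposition \ref{prop: E1 characterization}, by passing to the projective compactification. By the identification \eqref{corresp}, $u \in \Ec^1_P(\CC^d)$ is equivalent to $\tilde{u} \in \Ec^1(\PP^d, \omega, \tilde{\Phi}_P)$, and Proposition \ref{prop: E1 characterization} in turn characterizes the latter as
\begin{equation*}
\tilde{u} \in \Ec(\PP^d, \omega, \tilde{\Phi}_P) \ \text{and} \ \int_{\PP^d}(\tilde{u} - \tilde{\Phi}_P)\,\omega_{\tilde{u}}^d > -\infty.
\end{equation*}
The task is therefore to translate each of these two global conditions into the corresponding local condition stated in the proposition.

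For the mass condition, I would use that $\omega_{\tilde{u}}^d|_{\CC^d} = (dd^c u)^d$ together with the facts that $\tilde{\Phi}_P$ has the same singularity as $\tilde{H}_P$ (which is locally bounded on $\CC^d$) and that $\tilde{u} \leq \tilde{\Phi}_P + C$. Invoking Proposition \ref{prop: mixed MA mass global} gives $\int_{\PP^d}\omega_{\tilde{\Phi}_P}^d = \int_{\PP^d}\omega_{\tilde{H}_P}^d = \int_{\CC^d}(dd^c H_P)^d = \gamma_d$, and then Theorem \ref{thm: mass monotone} combined with the trivial inequality $\int_{\PP^d}\omega_{\tilde{u}}^d \geq \int_{\CC^d}(dd^c u)^d$ shows that $\tilde{u} \in \Ec(\PP^d, \omega, \tilde{\Phi}_P)$ if and only if $\int_{\CC^d}(dd^c u)^d = \gamma_d$, that is, $u \in \Ec_P(\CC^d)$; in that case $\omega_{\tilde{u}}^d$ places no mass on the hyperplane at infinity.

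Assuming the mass condition, the identity $\tilde{u} - \tilde{\Phi}_P = u - \Phi_P$ on $\CC^d$ then gives
\begin{equation*}
\int_{\PP^d}(\tilde{u} - \tilde{\Phi}_P)\,\omega_{\tilde{u}}^d = \int_{\CC^d}(u - \Phi_P)(dd^c u)^d.
\end{equation*}
Since both $\Phi_P$ and $H_P$ lie in $L_{P,+}(\CC^d)$, their difference is bounded on $\CC^d$, and since $(dd^c u)^d$ is a finite positive measure of mass $\gamma_d$, the integral $\int_{\CC^d}(\Phi_P - H_P)(dd^c u)^d$ is a finite constant. Therefore the left-hand side above is $>-\infty$ exactly when $\int_{\CC^d}(u - H_P)(dd^c u)^d > -\infty$, and combining with the mass condition yields the first equivalence of the proposition.

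For the ``in particular'' part, compactness of $\mathrm{supp}(dd^c u)^d$ together with the local boundedness of $H_P$ on $\CC^d$ (which follows from \eqref{sigmainkp}) makes $\int_{\CC^d}H_P(dd^c u)^d$ a finite quantity, so the condition $\int_{\CC^d}(u-H_P)(dd^c u)^d > -\infty$ collapses to $\int_{\CC^d}u(dd^c u)^d > -\infty$. The main technical care is in the mass comparison, specifically in ensuring that the mass at infinity vanishes precisely when the full mass $\gamma_d$ is attained on $\CC^d$; this hinges on Theorem \ref{thm: mass monotone} together with the model-type singularity of $\tilde{H}_P$.
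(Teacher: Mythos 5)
Your proof takes essentially the same route as the paper's: both pass to $\PP^d$ via the correspondence \eqref{corresp}, invoke Proposition \ref{prop: E1 characterization} for $\tilde u$, and translate back to $\CC^d$ using $\tilde u-\tilde{\Phi}_P = u-\Phi_P$ on $\CC^d$ together with the boundedness of $\Phi_P-H_P$. One caveat on the mass step: to conclude $\int_{\PP^d}\omega_{\tilde{\Phi}_P}^d = \int_{\PP^d}\omega_{\tilde{H}_P}^d = \gamma_d$ the right tool is Theorem \ref{thm: mass monotone} applied in both directions (since $\tilde{\Phi}_P\simeq\tilde{H}_P$), not Proposition \ref{prop: mixed MA mass global}, whose hypothesis $\tilde{H}_P\in\Ec(\PP^d,\omega,\tilde{\Phi}_P)$ would presuppose the very mass equality you are after; also, $\omega_{\tilde u}^d$ charges no mass on the (pluripolar) hyperplane at infinity for \emph{every} $u\in L_P(\CC^d)$, so $\int_{\PP^d}\omega_{\tilde u}^d = \int_{\CC^d}(dd^cu)^d$ holds unconditionally, not only once $u\in\Ec_P(\CC^d)$.
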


{ \begin{proof} Since $\tilde{H}_P \simeq \tilde{\Phi}_P$, 
$$\int_{\PP^d} (\tilde u-\tilde{H}_P) \omega_{\tilde u}^d >-\infty \ \hbox{iff} \ \int_{\PP^d} (\tilde u-\tilde{\Phi}_P) \omega_{\tilde u}^d >-\infty$$
where $\tilde u\in PSH(\PP^d,\omega)$ and $u$ are related by (\ref{eq: tilde u}). Moreover, $\Phi_P  \in L_{P,+}(\CC^d)$ implies $u\leq \Phi_P+c$ so that $\tilde u\in PSH(\PP^d,\omega,\tilde{\Phi}_P)$. But 
$$\int_{\PP^d} (\tilde u-\tilde{H}_P) \omega_{\tilde u}^d = \int_{\CC^d} (u-H_P) (dd^c u)^d$$
and the result follows from (\ref{corresp}) by applying Proposition \ref{prop: E1 characterization} to $\tilde u$.
For the last statement, note that for general $u\in L_P(\mathbb{C}^d)$ we may have $\int_{\CC^d} H_P (dd^c u)^d=+\infty$, but if $(dd^c u)^d$ has compact support then $\int_{\CC^d} H_P(dd^c u)^d$ is finite.  
\end{proof}}

Note that Theorem \ref{thm: mass monotone} and Proposition \ref{prop: mixed MA mass global} give the following result:
\begin{theorem}
	\label{thm: mass monotone local}
	Let $u_1,...,u_d$ be functions in $\Ec_{P}(\CC^d)$. Then 
	$$
	\int_{\CC^d} dd^c u_1 \wedge ... \wedge dd^c u_d = \gamma_d. 
	$$
\end{theorem}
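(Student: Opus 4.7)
The plan is to pass to the compactification $\PP^d$ via the correspondence $u \leftrightarrow \tilde u$ from \eqref{eq: tilde u} and then apply Proposition \ref{prop: mixed MA mass global} on the compact K\"ahler manifold $(\PP^d, \omega)$, using the common model-type reference potential $\tilde{\Phi}_P$. The entire argument hinges on showing that for each $u_j \in \Ec_P(\CC^d)$, the projective counterpart $\tilde u_j$ belongs to $\Ec(\PP^d, \omega, \tilde{\Phi}_P)$; once this is verified, the claim follows by reading off the masses back on $\CC^d$.

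To establish this membership I would check the two defining conditions separately. For the singularity comparison $\tilde u_j \prec \tilde{\Phi}_P$: since $u_j \in L_P(\CC^d)$, the difference $u_j - H_P$ is bounded at infinity, and upper semicontinuity of $u_j$ on the complementary compact set (together with $H_P \geq 0$) promotes this to $u_j \leq H_P + C_j$ on all of $\CC^d$; because $\tilde H_P$ has model type singularity we have $\tilde{\Phi}_P \simeq \tilde H_P$, and subtracting $\frac{r}{2}\log(1+|z|^2)$ then yields $\tilde u_j \leq \tilde{\Phi}_P + C_j'$ on $\PP^d$. For the mass equality $\int_{\PP^d} \omega_{\tilde u_j}^d = \int_{\PP^d} \omega_{\tilde{\Phi}_P}^d$: the hyperplane at infinity is a proper analytic subvariety, hence pluripolar, so nonpluripolar products carry no mass there, and for any $v \in L_P(\CC^d)$ the identity $\omega + dd^c \tilde v = dd^c v$ on $\CC^d$ gives
$$\int_{\PP^d} \omega_{\tilde v}^d \;=\; \int_{\CC^d} \omega_{\tilde v}^d \;=\; \int_{\CC^d} (dd^c v)^d.$$
Taking $v = u_j$ delivers mass $\gamma_d$ by definition of $\Ec_P(\CC^d)$, while taking $v = H_P$ gives $\int_{\PP^d} \omega_{\tilde H_P}^d = \gamma_d$; the sandwich estimate from Theorem \ref{thm: mass monotone} applied to $\tilde H_P - C \leq \tilde{\Phi}_P \leq \tilde H_P + C$ (shifts by a constant leave $\omega_u$ invariant) then promotes this to $\int_{\PP^d} \omega_{\tilde{\Phi}_P}^d = \gamma_d$. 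Thus $\tilde u_j \in \Ec(\PP^d, \omega, \tilde{\Phi}_P)$ for each $j$.

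With both ingredients in hand, Proposition \ref{prop: mixed MA mass global}, applied with $\omega_1 = \cdots = \omega_d = \omega$ and $\phi_1 = \cdots = \phi_d = \tilde{\Phi}_P$ (whose mass $\gamma_d$ is positive), yields
$$\int_{\PP^d} \omega_{\tilde u_1} \wedge \cdots \wedge \omega_{\tilde u_d} \;=\; \int_{\PP^d} \omega_{\tilde{\Phi}_P}^d \;=\; \gamma_d.$$
The same pluripolarity-of-the-hyperplane-at-infinity observation, applied now to the mixed product, identifies the left-hand side with $\int_{\CC^d} dd^c u_1 \wedge \cdots \wedge dd^c u_d$, giving the desired identity. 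The only real technical hurdle is the mass bookkeeping on $\PP^d$ — confirming that the nonpluripolar product sees no mass at infinity and that $\tilde{\Phi}_P$ and $\tilde H_P$ have the same total mass — since the singularity comparison is essentially built into the definition of $L_P$, and Proposition \ref{prop: mixed MA mass global} then finishes the job directly.
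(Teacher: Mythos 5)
Your proposal is correct and takes essentially the same approach as the paper, which simply notes that the result follows from Theorem~\ref{thm: mass monotone} and Proposition~\ref{prop: mixed MA mass global} without spelling out the details. You have filled in exactly the intended bookkeeping: the passage to $\PP^d$ via \eqref{eq: tilde u}, the verification that $\tilde u_j \in \Ec(\PP^d,\omega,\tilde\Phi_P)$ using the model-type singularity of $\tilde H_P$ and the fact that nonpluripolar products ignore the hyperplane at infinity, and the application of Proposition~\ref{prop: mixed MA mass global} with $\phi_1=\cdots=\phi_d=\tilde\Phi_P$.
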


For $u_1,...,u_n\in L_{P,+}(\CC^d)$ Theorem \ref{thm: mass monotone local} was proved in  \cite[Proposition 2.7]{Bay}.

Having the correspondence (\ref{corresp}) we can state a local version of Theorem \ref{thm: existence in E1}; this will be used in the sequel. Let $\mathcal M_P(\CC^d)$ denote the set of all positive Borel measures $\mu$ on $\CC^d$ with $\mu(\CC^d)=d! Vol(P)=\gamma_d$.

\begin{theorem}\label{thm: existence in E1 local}
	Assume that $\mu\in \mathcal M_P(\CC^d)$ is a positive nonpluripolar Borel measure. The following are equivalent
	\begin{enumerate}
		\item $\Ec^1_P(\CC^d) \subset L^1(\CC^d,\mu)$; 
		\item there exists $u\in \Ec^1_P(\CC^d)$ such that $(dd^c u)^d =\mu$;
		\item there exists $u \in \Ec^1_P(\CC^d)$ such that $${\mathcal F}_{\mu}(u) = \max_{v\in \Ec^1_P(\CC^d)} {\mathcal F}_{\mu}(v)<+\infty.$$
	\end{enumerate}
\end{theorem}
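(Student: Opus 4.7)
My plan is to reduce this local theorem on $\CC^d$ to the global Theorem \ref{thm: existence in E1} on $X = \PP^d$ with $\omega = r\omega_{FS}$ and $\phi = \tilde{\Phi}_P$. Since $P \subset r\Sigma$, the function $\tilde{H}_P = H_P - \frac{r}{2}\log(1+|z|^2)$ is bounded on $\PP^d$, hence $\tilde{\Phi}_P$ is a bounded model potential with empty unbounded locus, and the regularity hypotheses on $\phi$ required to invoke Theorem \ref{thm: existence in E1} are automatically satisfied.

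The first step is to extend $\mu$ trivially to $\tilde{\mu}$ on $\PP^d$ by $\tilde{\mu}(E) := \mu(E \cap \CC^d)$. The hyperplane at infinity $\PP^d \setminus \CC^d$ is pluripolar in $\PP^d$ (it lies in the $-\infty$ locus of a quasi-psh function), so since $\mu$ charges no pluripolar subset of $\CC^d$, $\tilde{\mu}$ is a nonpluripolar positive measure on $\PP^d$ with total mass $\tilde{\mu}(\PP^d) = \gamma_d = \int_{\PP^d}\omega_{\tilde{\Phi}_P}^d$; the last equality follows from Proposition \ref{prop: mixed MA mass global} applied to $\tilde{\Phi}_P \simeq \tilde{H}_P$ together with $\int_{\PP^d}\omega_{\tilde{H}_P}^d = \int_{\CC^d}(dd^c H_P)^d = \gamma_d$. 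Next I would translate the three conditions through the correspondence (\ref{corresp}). For (2), since $(dd^c u)^d = \omega_{\tilde{u}}^d$ on $\CC^d$ and both are nonpluripolar, $\omega_{\tilde{u}}^d = \tilde{\mu}$ on $\PP^d$ is equivalent to $(dd^c u)^d = \mu$ on $\CC^d$ (total masses agree by construction). For (3), using the cocycle identity (Proposition 3.3 of \cite{BBL}) to rewrite ${\bf E}_{\tilde{\Phi}_P}(\tilde{u})$ in terms of $E_{H_P}(u)$, the global functional $F_{\tilde{\mu}} = {\bf E}_{\tilde{\Phi}_P} - L_{\tilde{\mu}}$ differs from $\mathcal{F}_{\mu}$ by an additive constant depending only on $H_P, \Phi_P, \tilde{\mu}$, so existence and uniqueness of maximizers transfer.

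The subtle point, which I expect to be the main obstacle, is the $L^1$ translation for (1): the change of variables $u \mapsto \tilde{u} = u - \frac{r}{2}\log(1+|z|^2)$ involves an unbounded function, so $L^1$-integrability does not transfer automatically. The remedy is that $\frac{r}{2}\log(1+|z|^2)$ itself lies in $\Ec_P^1(\CC^d)$, since it corresponds to $\tilde{u} \equiv 0 \in \Ec^1(\PP^d, \omega, \tilde{\Phi}_P)$ (using boundedness of $\tilde{\Phi}_P$); hence local (1) forces $\log(1+|z|^2) \in L^1(\mu)$, which gives $u \in L^1(\CC^d,\mu) \Leftrightarrow \tilde{u} \in L^1(\PP^d,\tilde{\mu})$ for every $u \in \Ec_P^1(\CC^d)$, and the converse direction is immediate from the same observation. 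Local (1) is thereby equivalent to the global hypothesis $\Ec^1(\PP^d,\omega,\tilde{\Phi}_P) \subset L^1(\PP^d,\tilde{\mu})$ of Theorem \ref{thm: existence in E1}, and the three local equivalences fall out from the three global ones.
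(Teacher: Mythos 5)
Your overall strategy---reduce to the global Theorem \ref{thm: existence in E1} on $\PP^d$ via the trivial extension of $\mu$ and the correspondence (\ref{corresp})---is exactly what the paper intends, and the translation of conditions (2) and (3) through the cocycle identity is sound. The problem is in your treatment of condition (1), and the error is not peripheral.

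You assert that ``$\tilde H_P = H_P - \frac{r}{2}\log(1+|z|^2)$ is bounded on $\PP^d$, hence $\tilde\Phi_P$ is a bounded model potential with empty unbounded locus,'' and you then deduce that $\frac{r}{2}\log(1+|z|^2)\in\Ec^1_P(\CC^d)$ because it corresponds to $0\in\Ec^1(\PP^d,\omega,\tilde\Phi_P)$. This is only true when $H_P = r H_\Sigma + O(1)$, i.e.\ essentially when $P=r\Sigma$. For a general convex body $P\subset r\Sigma$, the inequality $H_P\le \frac{r}{2}\log(1+|z|^2)$ gives boundedness above, but $\tilde H_P$ is typically \emph{not} bounded below: along a ray $z=(w,0,\dots,0)$ one has $H_P(z)=p_1\log|w|$ with $p_1=\max\{j_1:J\in P\}$, while $\frac{r}{2}\log(1+|z|^2)\sim r\log|w|$, so $\tilde H_P\to-\infty$ whenever $p_1<r$ (take $P=[0,1]^2$ in $\RR^2$, where the minimal $r$ is $2$ but $p_1=1$). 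Consequently $\tilde\Phi_P$ is unbounded below on the hyperplane at infinity---this is precisely why the paper assumes ``small'' rather than ``empty'' unbounded locus for $\phi$---and $0$ is \emph{not} in $PSH(\PP^d,\omega,\tilde\Phi_P)$, so $\frac{r}{2}\log(1+|z|^2)\notin L_P(\CC^d)$ and your forcing argument that $\log(1+|z|^2)\in L^1(\mu)$ collapses.

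The repair does not require $\frac{r}{2}\log(1+|z|^2)$ at all. The global functional is $L_{\tilde\mu}(\tilde u)=\int_{\PP^d}(\tilde u-\tilde\Phi_P)\,d\tilde\mu$, and since $\tilde u-\tilde\Phi_P=u-\Phi_P$ on $\CC^d$ (the $\frac{r}{2}\log(1+|z|^2)$ term cancels), the unbounded change of variables never enters: global (1) translates directly to $\int_{\CC^d}(u-\Phi_P)\,d\mu>-\infty$ for all $u\in\Ec^1_P(\CC^d)$. To pass between this and the stated local (1), $\Ec^1_P(\CC^d)\subset L^1(\CC^d,\mu)$, the correct pivot functions are $\Phi_P$ and $H_P$, which genuinely do lie in $\Ec^1_P(\CC^d)$ (since ${\bf E}_{\tilde\Phi_P}(\tilde\Phi_P)=0$ and $\tilde H_P\simeq\tilde\Phi_P$). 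In particular local (1) implies $\Phi_P\in L^1(\mu)$, whence the two formulations agree; for the applications in the paper $\mu$ has compact support so $\Phi_P\in L^1(\mu)$ is automatic, exactly as Remark \ref{rem: compact support} points out.
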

A priori the functional ${\mathcal F}_{\mu}$ is defined for $u\in \Ec^1_P(\CC^d)$ by 
$$
{\mathcal F}_{\mu,\Phi_P}(u) := E_{\Phi_P}(u) -\int_{\CC^d} (u-\Phi_P) d\mu.
$$
However, using this notation, since 
$$
{\mathcal F}_{\mu,\Phi_P}(u) - {\mathcal F}_{\mu,H_P}(u) = {\mathcal F}_{\mu,\Phi_P}(H_P),
$$
in  statement $(3)$ of Theorem \ref{thm: existence in E1 local} we can take either of the two definitions ${\mathcal F}_{\mu,\Phi_P}$ or ${\mathcal F}_{\mu,H_P}$ for ${\mathcal F}_{\mu}$.

\begin{remark} \label{rem: compact support}
	If $\mu$ has compact support in $\CC^d$ then $\int_{\CC^d} \Phi_P d\mu$ and $\int_{\CC^d} H_P d\mu$ are finite. Therefore, the functional ${\mathcal F}_{\mu}$ can be replaced by 
	$$
	u \mapsto E_{H_P}(u) - \int_{\CC^d} u d\mu.
	$$ 
\end{remark}

Using the remark, for $\mu\in \mathcal M_P(\CC^d)$ with compact support, it is natural to define the Legendre-type transform of $E_{H_P}$:
\begin{equation}\label{estar2} E^*(\mu):= \sup_{u\in \Ec_P^1(\CC^d)}[E_{H_P}(u) -\int_{\CC^d} u d\mu].
\end{equation}
This functional, which will appear in the rate function for our LDP, will be given a more concrete interpretation using $P-$pluripotential theory in section 4; cf., equation (\ref{eandj}).

Finally, for future use, we record the following consequence of Lemma \ref{lem: pluripolar} and the correspondence (\ref{corresp}).

\begin{lemma}
	\label{lem: pluripolar local}
	If $E\subset \CC^d$ is pluripolar then there exists $u\in \Ec^1_P(\CC^d)$ such that $E\subset \{u=-\infty\}$.
\end{lemma}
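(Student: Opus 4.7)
The plan is to lift the problem to $\PP^d$, apply Lemma \ref{lem: pluripolar} there with the reference singular potential $\tilde\Phi_P$, and then pull back via the correspondence \eqref{corresp}. First, since pluripolarity is a local property of subsets of complex manifolds and $\CC^d$ is a Zariski open subset of $\PP^d$, any pluripolar set $E\subset\CC^d$ is also pluripolar viewed inside $\PP^d$. It therefore suffices to produce $\tilde u\in\Ec^1(\PP^d,\omega,\tilde\Phi_P)$ with $E\subset\{\tilde u=-\infty\}$, where $\omega=r\omega_{FS}$ is the K\"ahler form introduced in this subsection.

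Before invoking Lemma \ref{lem: pluripolar} one must verify that its standing hypotheses hold for $\phi=\tilde\Phi_P$. By \cite[Theorem 3.12]{DDL}, combined with the positivity $\int_{\PP^d}\omega_{\tilde H_P}^d=\gamma_d>0$, the envelope $\tilde\Phi_P=P_\omega[\tilde H_P]$ is a model potential; the discussion surrounding \eqref{corresp} already records that $\tilde H_P\simeq\tilde\Phi_P$, so $\tilde H_P$ itself has model type singularity. Moreover, since $H_P$ is locally bounded on $\CC^d$ (by \eqref{sigmainkp}), the singular locus of $\tilde\Phi_P$ is contained in the hyperplane at infinity of $\PP^d$, which is a closed complete pluripolar set; hence $\tilde\Phi_P$ has small unbounded locus.

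With these verifications in hand, Lemma \ref{lem: pluripolar} produces $\tilde u\in\Ec^1(\PP^d,\omega,\tilde\Phi_P)$ with $E\subset\{\tilde u=-\infty\}$. Setting
$$u(z):=\tilde u(z)+\tfrac{r}{2}\log(1+|z|^2),\qquad z\in\CC^d,$$
the correspondence \eqref{corresp} yields $u\in\Ec_P^1(\CC^d)$. Because the additive correction is finite on $\CC^d$, we have $\{u=-\infty\}\cap\CC^d=\{\tilde u=-\infty\}\cap\CC^d\supset E$, which is exactly what is needed.

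The only step with any content is confirming the standing hypotheses on $\tilde\Phi_P$ in the second paragraph; once that is done, the lemma is a straight transcription through the correspondence. If one prefers to avoid appealing to the small unbounded locus condition, an alternative route is to repeat the construction in the proof of Lemma \ref{lem: pluripolar} directly on $\PP^d$: start from $v\in\Ec^1(\PP^d,\omega,0)$ with $E\subset\{v=-\infty\}$ (given by \cite[Corollary 2.11]{BBGZ}) and form $\tilde u:=P_\omega(\min(v,\tilde\Phi_P))$, then translate back to $\CC^d$ as above.
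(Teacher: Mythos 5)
Your proof is correct and takes exactly the route the paper intends: the paper states this lemma without proof, calling it simply ``a consequence of Lemma \ref{lem: pluripolar} and the correspondence (\ref{corresp})'', and your argument fills in the expected details — lifting $E$ to $\PP^d$, checking that $\tilde\Phi_P$ satisfies the standing hypotheses (model type singularity and small unbounded locus), invoking Lemma \ref{lem: pluripolar}, and transporting back via (\ref{corresp}). The verifications you supply are the right ones and are genuinely needed for the global lemma to apply.
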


\section{$P-$pluripotential theory notions}

Given $E\subset \CC^d$, the $P-$extremal function of $E$ is 
$$V^*_{P,E}(z):=\limsup_{\zeta \to z}V_{P,E}(\zeta)$$ where
$$V_{P,E}(z):=\sup \{u(z):u\in L_P(\CC^d), \ u\leq 0 \ \hbox{on} \ E\}.$$
For $K\subset \CC^d$ compact, $w:K\to \RR^+$ is an admissible weight function on $K$ if $w\geq 0$ is an uppersemicontinuous function with $\{z\in K:w(z)>0\}$ nonpluripolar. Setting $Q:= -\log w$, we write $Q\in \mathcal A(K)$ and define the {\it weighted $P-$extremal function} $$V^*_{P,K,Q}(z):=\limsup_{\zeta \to z}V_{P,K,Q}(\zeta)$$ where
$$V_{P,K,Q}(z):=\sup \{u(z):u\in L_P(\CC^d), \ u\leq Q \ \hbox{on} \ K\}. $$
If $Q=0$ we write $V_{P,K,Q}=V_{P,K}$, consistent with the previous notation. For $P=\Sigma$, 
$$V_{\Sigma,K,Q}(z)=V_{K,Q}(z):= \sup \{u(z):u\in L(\CC^d), \ u\leq Q \ \hbox{on} \ K\} $$
is the usual weighed extremal function as in Appendix B of \cite{ST}.

We write (omitting the dependence on $P$)
$$\mu_{K,Q}:= (dd^c V^*_{P,K,Q})^d \ \hbox{and} \ \mu_{K}:= (dd^c V^*_{P,K})^d$$
for the Monge-Amp\`ere measures of $V^*_{P,K,Q}$ and $V^*_{P,K}$ (the latter if $K$ is not pluripolar). Proposition 2.5 of \cite{BBL} states that
$$supp(\mu_{K,Q})\subset \{z\in K: V_{P,K,Q}^*(z)\geq Q(z)\}$$
and $V_{P,K,Q}^*=Q$ q.e. on $supp(\mu_{K,Q})$, i.e., off of a pluripolar set.

\subsection{Energy}

We  recall some results and definitions from \cite{BBL}. For $u,v \in L_{P,+}(\CC^d)$, we define the {\it mutual energy} 
$$
\mathcal E (u,v):= \int_{\CC^d} (u-v)\sum_{j=0}^d (dd^cu)^j\wedge (dd^cv)^{d-j}.
$$
For simplicity, when $v=H_P$, we denote the associated (normalized) energy functional by $E$:
 $$E(u):=E_{H_P}(u)= \frac{1}{d+1}\sum_{j=0}^d\int_{\CC^d} (u-H_P) dd^cu^j\wedge (dd^cH_P)^{d-j}$$
 (recall (\ref{evu})).

For $u,u',v\in L_{P,+}(\CC^d)$, and for $0\leq t\leq 1$, we define
$$f(t):= {\mathcal E} (u+t(u'-u),v),$$
From Proposition 3.1 in \cite{BBL}, $f'(t)$ exists for $0\leq t \leq 1$ and 
$$ 
f'(t)= (d+1)\int_{\CC^d} (u'-u) (dd^c(u+t(u'-u)))^d
$$
Hence, taking $v=H_P$, we have, for $F(t):=E( u+t(u'-u))$, that 
$$F'(t)=\int_{\CC^d} (u'-u) (dd^c(u+t(u'-u)))^d.$$
Thus $F'(0)=\int_{\CC^d} (u'-u) (dd^c u)^d$ and we write
\begin{equation}
\label{diff02}
<E' (u), u'-u>:=\int (u'-u)(dd^cu)^d.
\end{equation}

We need some applications of a global domination principle. The following version, sufficient for our purposes, follows from \cite{DDL}, Corollary 3.10 (see also Corollary A.2 of \cite{PE}).

\begin{proposition} \label{gctp} Let $u \in L_P(\CC^d)$ and $v \in \mathcal E_P(\CC^d)$ with $u \leq v$ a.e. $(dd^cv)^d$. Then $u \leq v$ in $\CC^d$.
\end{proposition}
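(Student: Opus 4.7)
My plan is to transfer the statement to the compact K\"ahler manifold $(\PP^d,\omega)$ with $\omega=r\omega_{FS}$ via the correspondence $w\mapsto \tilde w$ of (\ref{eq: tilde u}), and then appeal to the global domination principle \cite[Corollary 3.10]{DDL} on $\PP^d$ directly.

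First I would verify that both $\tilde u$ and $\tilde v$ lie in $PSH(\PP^d,\omega,\tilde{\Phi}_P)$. For any $w\in L_P(\CC^d)$ the growth condition $w-H_P=O(1)$ at infinity combined with the local upper-boundedness of psh functions yields $w\leq H_P+C_w$ globally on $\CC^d$; since $\tilde H_P$ has model type singularity (so $\tilde H_P\simeq \tilde{\Phi}_P$), we obtain $\tilde w\leq \tilde{\Phi}_P+C_w'$ on $\PP^d$, which is exactly what is needed.

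Second, I would upgrade $\tilde v$ to the full-mass class $\mathcal E(\PP^d,\omega,\tilde{\Phi}_P)$ while verifying that $\omega_{\tilde v}^d$ places no mass on the hyperplane at infinity $H_\infty=\PP^d\setminus\CC^d$. The equality of singularity types $\tilde H_P\simeq \tilde{\Phi}_P$ together with \cite[Theorem 3.12]{DDL} gives $\int_{\PP^d}\omega_{\tilde{\Phi}_P}^d=\int_{\PP^d}\omega_{\tilde H_P}^d$, and this common value is $\int_{\CC^d}(dd^c H_P)^d=\gamma_d$ since $\omega_{\tilde H_P}^d$ restricts to $(dd^c H_P)^d$ on $\CC^d$ and no mass escapes to $H_\infty$ (by direct computation, using that $H_P$ is locally bounded on $\CC^d$). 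Now $v\in \Ec_P(\CC^d)$ means $\int_{\CC^d}(dd^c v)^d=\gamma_d$, and $\omega_{\tilde v}^d|_{\CC^d}=(dd^c v)^d$. Theorem \ref{thm: mass monotone} applied to $\tilde v\leq \tilde{\Phi}_P+C$ on $\PP^d$ gives $\int_{\PP^d}\omega_{\tilde v}^d\leq \gamma_d$, so the only possibility is $\int_{\PP^d}\omega_{\tilde v}^d=\gamma_d$ with no mass on $H_\infty$, and $\tilde v\in \mathcal E(\PP^d,\omega,\tilde{\Phi}_P)$ follows.

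With these two ingredients the hypothesis $u\leq v$ a.e.\ $(dd^c v)^d$ on $\CC^d$ transports verbatim to $\tilde u\leq \tilde v$ a.e.\ $\omega_{\tilde v}^d$ on $\PP^d$, and \cite[Corollary 3.10]{DDL} immediately yields $\tilde u\leq \tilde v$ on all of $\PP^d$, equivalently $u\leq v$ on $\CC^d$. The main obstacle is the second step: careful mass bookkeeping at infinity is what allows the $\CC^d$-a.e.\ hypothesis with respect to $(dd^c v)^d$ to be upgraded to a $\PP^d$-a.e.\ hypothesis with respect to the global nonpluripolar measure $\omega_{\tilde v}^d$. Once that is in hand the conclusion is a direct invocation of the compact-manifold machinery already developed in \cite{DDL}.
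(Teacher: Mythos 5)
Your proposal is correct and it follows the same route the paper indicates: the paper simply cites \cite[Corollary~3.10]{DDL} (the domination principle on a compact K\"ahler manifold relative to a model potential), and your argument spells out the transfer to $(\PP^d,\omega)$ via $w\mapsto\tilde w$, the verification that $\tilde u\in PSH(\PP^d,\omega,\tilde\Phi_P)$ and $\tilde v\in\Ec(\PP^d,\omega,\tilde\Phi_P)$, and the observation that the nonpluripolar measure $\omega_{\tilde v}^d$ charges no mass on the pluripolar set $H_\infty$ so the a.e.\ hypothesis carries over. The one cosmetic remark is that "no mass escapes to $H_\infty$" is automatic from the definition of the nonpluripolar product (it never charges pluripolar sets) rather than from local boundedness of $H_P$ on $\CC^d$.
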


This will be used to prove an approximation result, Proposition \ref{goodapprox}, which itself will be essential in the sequel. First we need a lemma.

\begin{lemma}
	\label{lem: energy estimate}
	Assume that $\varphi \leq u,v\leq H_P$ are functions in $\Ec^1_P(\CC^d)$. Then for all $t>0$,
	$$
	\int_{\{u\leq H_P -2t\}} (H_P-u) (dd^c v)^d \leq 2^{d+1} \int_{\{\varphi\leq H_P -t\}} (H_P-\varphi) (dd^c \varphi)^d. 
	$$
	In particular, the left hand side converges to $0$ as $t\to +\infty$ uniformly in $u,v$. 
\end{lemma}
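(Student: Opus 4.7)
My plan is a chain of four short reductions---a pointwise cutoff, a monotonicity exchange on the potential, an integration-by-parts swap of the Monge-Amp\`ere measure, and a trivial support bound---whose constants will multiply to $2^{d+1}$.

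\emph{Pointwise steps.} I would first introduce the truncations $u_t := \max(u, H_P - t)$ and $\varphi_t := \max(\varphi, H_P - t)$, both in $L_{P,+}(\CC^d)$. On $E_t := \{u \leq H_P - 2t\}$ one has $u_t = H_P - t$, so $u_t - u \geq t$ and hence $H_P - u = t + (u_t - u) \leq 2(u_t - u)$; this produces the first factor of $2$. Next, since $\varphi \leq u$ and $s \mapsto (H_P - t - s)^+$ is nonincreasing, we get $u_t - u \leq \varphi_t - \varphi$ pointwise. Combining,
\[
\int_{E_t}(H_P - u)(dd^c v)^d \;\leq\; 2\int_{\CC^d}(\varphi_t - \varphi)(dd^c v)^d.
\]

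\emph{Swap the measure, $v \to \varphi$ (the technical heart).} Using the correspondence (\ref{corresp}) I would transfer the inequality to $(\PP^d,\omega)$ with model $\tilde\Phi_P$, and then prove
\[
\int_{\CC^d}(\varphi_t - \varphi)(dd^c v)^d \;\leq\; 2^d\int_{\CC^d}(\varphi_t - \varphi)(dd^c \varphi)^d.
\]
The strategy is the telescoping identity
\[
\omega_{\tilde v}^d - \omega_{\tilde\varphi}^d \;=\; \sum_{k=0}^{d-1}dd^c(\tilde v - \tilde\varphi)\wedge\omega_{\tilde v}^k\wedge\omega_{\tilde\varphi}^{d-1-k},
\]
paired against the nonnegative $\varphi_t - \varphi$ and integrated by parts; the signs $\tilde v - \tilde \varphi \geq 0$ and $\omega_{\varphi_t}\geq 0$ produce a one-step exchange bound, whose $d$-fold iteration (in the spirit of \cite[Lemma 3.7]{DDL}) yields the factor $2^d$. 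This step is what I expect to be the main obstacle: the iteration requires careful sign tracking at each stage and good control at the singular locus of $\tilde\Phi_P$ (the hyperplane at infinity in $\PP^d$), which is where the DDL framework for prescribed singularities and the model-potential property $\tilde\Phi_P = P_\omega[\tilde H_P]$ are essential.

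\emph{Support bound and limit.} Finally, since $\varphi_t - \varphi = (H_P - t - \varphi)^+$ is supported on $\{\varphi \leq H_P - t\}$ and $\varphi_t - \varphi \leq H_P - \varphi$ on that set,
\[
\int_{\CC^d}(\varphi_t - \varphi)(dd^c\varphi)^d \;\leq\; \int_{\{\varphi \leq H_P - t\}}(H_P - \varphi)(dd^c\varphi)^d,
\]
and the combined constant $2 \cdot 2^d = 2^{d+1}$ gives the claimed bound. The ``In particular'' assertion then follows: Proposition \ref{pro: E1 characterization local} together with $\varphi \in \Ec^1_P(\CC^d)$ yields $\int_{\CC^d}(H_P - \varphi)(dd^c\varphi)^d < +\infty$, and since $(dd^c\varphi)^d$ is nonpluripolar while $\bigcap_{t>0}\{\varphi \leq H_P - t\} \subset \{\varphi = -\infty\}$ is pluripolar, dominated convergence forces the right-hand side to $0$ as $t\to+\infty$, uniformly in $u$ and $v$.
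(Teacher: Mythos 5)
You take a genuinely different route from the paper, and the route has a gap at its crux. The paper proves the lemma by combining a layer-cake (distribution-function) expansion with the Monge--Amp\`ere comparison principle applied to the chain of sublevel sets
$$\{u\leq H_P-2s\}\subset\bigl\{\varphi\leq \tfrac{v+H_P}{2}-s\bigr\}\subset\{\varphi\leq H_P-s\},$$
together with $dd^cv\leq 2\,dd^c\bigl(\tfrac{v+H_P}{2}\bigr)$; this yields the sublevel estimate $(dd^cv)^d(\{u\leq H_P-2s\})\leq 2^d(dd^c\varphi)^d(\{\varphi\leq H_P-s\})$, which feeds directly into the layer-cake identity to give $2^{d+1}$.

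Your pointwise steps (the truncation $u_t=\max(u,H_P-t)$ so that $H_P-u\leq 2(u_t-u)$ on $\{u\leq H_P-2t\}$, the monotonicity $u_t-u\leq\varphi_t-\varphi$, and the final support bound) are all correct. But the step you label ``the technical heart'' --- the inequality
$$\int_{\CC^d}(\varphi_t-\varphi)(dd^cv)^d\;\leq\;2^d\int_{\CC^d}(\varphi_t-\varphi)(dd^c\varphi)^d$$
--- is not established, and I do not see how the proposed telescoping--integration-by-parts closes it. After one integration by parts the pairing becomes $\int(\tilde v-\tilde\varphi)\,dd^c(\varphi_t-\varphi)\wedge(\cdots)$, and $dd^c(\varphi_t-\varphi)=dd^c\varphi_t-dd^c\varphi$ is a difference of positive currents with no definite sign (on $\{\varphi<H_P-t\}$ it equals $dd^cH_P-dd^c\varphi$), so no one-sided bound falls out. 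More to the point, by the coarea identity $\varphi_t-\varphi=\int_t^\infty\mathbf 1_{\{\varphi<H_P-r\}}\,dr$, your claim is equivalent to the \emph{same-depth} comparison
$$\int_t^\infty(dd^cv)^d(\{\varphi<H_P-r\})\,dr\;\leq\;2^d\int_t^\infty(dd^c\varphi)^d(\{\varphi<H_P-r\})\,dr,$$
whereas the comparison principle (via the $\tfrac{v+H_P}{2}$ device) relates sublevel sets at \emph{different} depths, $(dd^cv)^d(\{\varphi\leq H_P-2s\})\leq 2^d(dd^c\varphi)^d(\{\varphi\leq H_P-s\})$. Substituting the latter into the Fubini expression costs an extra factor of $2$ and moves the lower limit from $t$ to $t/2$, landing at $2^{d+2}\int_{\{\varphi\leq H_P-t/2\}}(H_P-\varphi)(dd^c\varphi)^d$ rather than the stated bound. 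That weaker estimate would still give the ``in particular'' convergence, but it does not prove the lemma as stated, and the mechanism you actually need to close the gap is precisely the sublevel-set comparison principle the paper uses --- not a measure swap at a fixed test function.
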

\begin{proof}
	For $s>0$, we have the following inclusions of sets:
	$$
	(u\leq H_P-2s)  \subset \left (\varphi \leq \frac{v+H_P}{2} -s\right )\subset (\varphi \leq H_P-s).
	$$
	We first note that the left hand side in the lemma is equal to
	\begin{equation}
		\label{basic integral}
		\int_{\{u\leq H_P -2t\}} (H_P-u) (dd^c v)^d
	\end{equation}
	$$=2t\int_{\{u\leq H_P -2t\}} (dd^c v)^d+\int_{2t}^{\infty}\left(\int_{\{u\leq H_P -s\}} (dd^c v)^d\right)ds.$$
	We claim that, for all $s>0$, 
	\begin{equation}
		\label{comparison principle}
		\int_{\{u\leq H_P -2s\}} (dd^c v)^d\leq 2^d\int_{\{\varphi\leq H_P -s\}} (dd^c \varphi)^d.
	\end{equation}
	Indeed, the comparison principle (\cite[Corollary 3.6]{DDL}) and the inclusions of sets above give
	$$\int_{\{u\leq H_P -2s\}} (dd^c v)^d\leq \int_{\{\varphi \leq \frac{v+H_P}{2} -s\}} (dd^c v)^d\leq 2^d\int_{\{\varphi \leq \frac{v+H_P}{2} -s\}} \left(dd^c \frac{v+H_P}{2}\right)^d$$
	$$\leq 2^d\int_{\{\varphi \leq \frac{v+H_P}{2} -s\}} (dd^c \varphi)^d\leq 2^d\int_{\{\varphi \leq H_P-s\}} (dd^c \varphi)^d.$$
	The claim is proved. Using \eqref{comparison principle} and \eqref{basic integral} we obtain 
$$\int_{\{u\leq H_P -2t\}} (H_P-u) (dd^c v)^d$$
$$\leq 2^{d+1} t \int_{\{\varphi\leq H_P-t\}}(dd^c \varphi)^d+ 2^{d+1} \int_{t}^{+\infty} \left( \int_{\{\varphi \leq H_P-s\}}(dd^c \varphi)^d\right)ds$$
$$=2^{d+1} \int_{\{\varphi\leq H_P -t\}} (H_P-\varphi) (dd^c \varphi)^d.$$
\end{proof}

\begin{proposition} \label{goodapprox} Let $u\in \mathcal E_P^1(\CC^d)$ with $(dd^c u)^d=\mu$ having support in a nonpluripolar compact set $K$ so that $\int_K ud\mu > -\infty$ from Proposition \ref{pro: E1 characterization local}. Let $\{Q_j\}$ be a sequence of continuous functions on $K$ decreasing to $u$ on $K$. Then $u_j:= V_{P,K,Q_j}^* \downarrow  u$ on $\CC^d$ and 
$\mu_j:=(dd^cu_j)^d$ is supported in $K$. In particular, $\mu_j \to \mu =(dd^cu)^d$ weak-*. Moreover,
\begin{equation}\label{weak*}\lim_{j\to \infty} \int_{K} Q_jd\mu_j = \lim_{j\to \infty} \int_{K} Q_jd\mu=\int_K ud\mu>-\infty .\end{equation}
\end{proposition}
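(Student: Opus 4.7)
My plan is to handle the four claims in turn, relying on the domination principle (Proposition~\ref{gctp}), the envelope identities of \cite[Prop.~2.5]{BBL}, and the variational characterization in Theorem~\ref{thm: existence in E1 local}.

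First I would establish $u_j \downarrow u$. Monotonicity is immediate from $Q_{j+1} \leq Q_j$, and $u \leq u_j$ holds because $u \in L_P$ with $u \leq Q_j$ on $K$ is an admissible competitor in the supremum defining $V_{P,K,Q_j}$. Writing $v := \lim_j u_j \geq u$, the estimate $u_j \leq Q_j$ quasi-everywhere on $K$ from \cite[Prop.~2.5]{BBL} gives $v \leq u$ q.e.\ on $K$, hence $v \leq u$ $\mu$-a.e.\ since $\mu$ is nonpluripolar and carried on $K$; Proposition~\ref{gctp} extends this to $v \leq u$ on all of $\CC^d$, so $v = u$. Next, $\text{supp}(\mu_j) \subset K$ and $u_j = Q_j$ q.e.\ on $\text{supp}(\mu_j)$ are also from \cite[Prop.~2.5]{BBL}, while $\mu_j(\CC^d) = \gamma_d$ by Theorem~\ref{thm: mass monotone local}; through the correspondence (\ref{corresp}), $\tilde u_j \downarrow \tilde u$ in $\mathcal{E}^1(\PP^d,\omega,\tilde{\Phi}_P)$, and continuity of the nonpluripolar Monge-Amp\`ere operator along decreasing $\mathcal{E}^1$-sequences yields $\mu_j \to \mu$ weak-*.

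For the integral identity, $\int Q_j\, d\mu \to \int u\, d\mu$ follows from dominated convergence applied to $0 \leq Q_j - u \leq Q_1 - u \in L^1(\mu)$ (using $Q_1$ bounded on $K$ and $\int u\, d\mu > -\infty$ from Proposition~\ref{pro: E1 characterization local}). For $\int Q_j\, d\mu_j$, the envelope identity on $\text{supp}(\mu_j)$ gives $\int Q_j\, d\mu_j = \int u_j\, d\mu_j$; invoking Theorem~\ref{thm: existence in E1 local} with Remark~\ref{rem: compact support}, both $u$ and $u_j$ maximize their respective functionals $\mathcal{F}_\nu(v) := E(v) - \int v\, d\nu$ for $\nu = \mu, \mu_j$, and cross-comparing their values at $u$ and $u_j$ yields the chain
\[
0 \;\leq\; \int (u_j - u)\, d\mu_j \;\leq\; E(u_j) - E(u) \;\leq\; \int (u_j - u)\, d\mu,
\]
whose right side tends to $0$ by dominated convergence. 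This squeezes $E(u_j) \to E(u)$. Splitting $\int u_j\, d\mu_j = \int H_P\, d\mu_j + \int (u_j - H_P)\, d\mu_j$, the first summand converges to $\int H_P\, d\mu$ by weak-* convergence and continuity of $H_P$ on the compact set $K$, while the second is (up to normalization) the top summand of $(d+1) E(u_j)$ and converges to the corresponding term for $u$ by $\mathcal{E}^1$-continuity of each mixed Monge-Amp\`ere energy term under decreasing convergence. Summing, $\int Q_j\, d\mu_j \to \int u\, d\mu$.

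The main obstacle is this final step. The upper bound $\limsup \int Q_j\, d\mu_j \leq \int u\, d\mu$ comes easily from $Q_j \leq Q_k$ for $j \geq k$ and weak-* convergence of $\mu_j \to \mu$, but the matching $\liminf$ cannot come from weak-* and upper semicontinuity of $u$ alone; it must be extracted from the variational/energy structure, and the key technical input is that as $u_j \downarrow u$ in $\mathcal{E}^1_P(\CC^d)$, each mixed term $\int (u_j - H_P)\,(dd^c u_j)^k \wedge (dd^c H_P)^{d-k}$ converges to the corresponding term for $u$, a fact built into the $\mathcal{E}^1$-theory of \cite{BBGZ,DDL}.
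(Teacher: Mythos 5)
Your first two claims ($u_j\downarrow u$ via the domination principle, and the second equality in \eqref{weak*} via dominated/monotone convergence) match the paper's argument in substance. The variational squeeze
\[
0\;\le\;\int(u_j-u)\,d\mu_j\;\le\;E(u_j)-E(u)\;\le\;\int(u_j-u)\,d\mu\;\to\;0
\]
obtained by cross-comparing $\mathcal F_\mu$ and $\mathcal F_{\mu_j}$ at $u$ and $u_j$ is a genuinely different observation from anything in the paper's proof, and it is correct as far as it goes; it cleanly delivers $E(u_j)\to E(u)$ and $\int(u_j-u)\,d\mu_j\to 0$.

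The gap is in the very last step, and you yourself flag it as ``the main obstacle.'' The squeeze gives $\int u_j\,d\mu_j=\int u\,d\mu_j+o(1)$, but from upper semicontinuity of $u$ and weak-$*$ convergence one only obtains $\limsup_j\int u\,d\mu_j\le\int u\,d\mu$; the needed $\liminf_j\int u\,d\mu_j\ge\int u\,d\mu$ does not follow. Your split $\int u_j\,d\mu_j=\int H_P\,d\mu_j+\int(u_j-H_P)\,d\mu_j$ then reduces everything to
\[
\int(u_j-H_P)(dd^c u_j)^d\;\longrightarrow\;\int(u-H_P)(dd^c u)^d,
\]
which you assert as ``$\mathcal E^1$-continuity of each mixed Monge--Amp\`ere energy term under decreasing convergence, a fact built into the $\mathcal E^1$-theory of \cite{BBGZ,DDL}.'' But the continuity results actually stated in those references concern the full energy $E$ (or mutual energies), not individual mixed terms $\int(u_j-\phi)\omega_{u_j}^k\wedge\omega_\phi^{d-k}$ with a fixed reference potential; convergence of the sum $E(u_j)\to E(u)$ together with the ordering $T_d\le T_{d-1}\le\dots\le T_0$ of the terms does not by itself force term-by-term convergence unless one also establishes a semicontinuity property of the gradient pieces, and that is exactly the analytic content that must be supplied. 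This is precisely what the paper's Lemma~\ref{lem: energy estimate} provides: a uniform-in-$j$ tail estimate
\[
\int_{\{u\le H_P-2t\}}(H_P-u)\,(dd^c u_j)^d\;\le\;2^{d+1}\int_{\{u\le H_P-t\}}(H_P-u)\,(dd^c u)^d\;\to\;0\qquad(t\to\infty),
\]
proved by the comparison principle, which yields the uniform integrability needed to pass to the limit in the $k=d$ term via truncation and the plurifine locality of nonpluripolar products. Without supplying either a precise citation for the term-by-term statement (in the prescribed-singularity framework) or an estimate of this type, the argument is incomplete; your variational squeeze is an elegant reduction, but it does not by itself bypass the uniform integrability step that Lemma~\ref{lem: energy estimate} was designed to prove.
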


\begin{proof} 
We can assume $\{Q_j\}$ are defined and decreasing to $u$ on the closure of a bounded open neighborhood $\Omega$ of $K$. By adding a negative constant we can assume that $Q_1\leq 0$ on $\Omega$. Since $\{Q_j\}$ is decreasing, so is the sequence $\{u_j\}$. {Moreover, by \cite[Proposition 5.1]{BT82}  $u_j \leq Q_j$ on $K\setminus E_j$ where $E_j$ is pluripolar. But $u$ is a competitor in the definition of $V_{P,K,Q_j}$ so that $u \leq u_j$ on $\CC^d$. Thus $\tilde u:= \lim_{j\to \infty} u_j  \geq u$ everywhere and $\tilde u \leq  u$ on $K\setminus E$, where $E:= \cup_{j} E_j$ is a pluripolar set.  Since $(dd^c u)^d$ put no mass on pluripolar sets,  
$$
\int_{\{u<\tilde{u}\}}(dd^c u)^d\leq  \int_{E \cup(\CC^d\setminus K)}(dd^c u)^d  =0.
$$
 It thus follows from Proposition \ref{gctp} that $\tilde u \leq  u$, hence $\tilde{u}=u$ on $\CC^d$.} 
 
 The second equality in \eqref{weak*} follows from the monotone convergence theorem. It remains to prove that 
$$ \lim_{j\to \infty}	\int_K (-Q_j) d\mu_j = \int_K (-u) d\mu.$$

 For each $k$ fixed and $j\geq k$ we have 
 $$
 \int_K (-Q_j) d\mu_j \geq \int_K (-Q_k) d\mu_j= \int_{\Omega} (-Q_k) d\mu_j,
 $$ 
 hence $\liminf_{j\to \infty} \int_K (-Q_j) d\mu_j \geq \int_K (-Q_k) d\mu$ since $\Omega$ is open and $\mu_j,\mu$ are supported on $K$.  Letting $k\to +\infty$ we arrive at 
 $$
\liminf_{j\to \infty} \int_K (-Q_j) d\mu_j \geq \int_K (-u) d\mu.
 $$
 
 It remains to prove that
 $$
\limsup_{j\to \infty} \int_K (-Q_j) d\mu_j \leq \int_K (-u) d\mu.
 $$
The sequence $\{u_j\}$ is not necessarily uniformly bounded below on $K$. However, using the facts that
$Q_j \geq u$ and $H_P$ is continuous in $\CC^d$, it suffices to prove that 
 \begin{equation}
 	\label{eq: weak convergence Qj}
 	\limsup_{j\to \infty}\int_K (H_P-u) (dd^c u_j)^d \leq \int_K (H_P-u) (dd^c u)^d. 
 \end{equation} 
 To verify (\ref{eq: weak convergence Qj}), we use Lemma \ref{lem: energy estimate}.
 
 By adding a negative constant we can assume that $u_j\leq H_P$.  For a function $v$ and for $t>0$ we define $v^t:= \max(v,H_P-t)$. Note that for each $t$ the sequence $\{u^t_j\}$ is locally uniformly bounded below. Define
 $$a(t):=2^{d+1} \int_{\{u\leq H_P-t/2\}} (H_P-u)(dd^c u)^d.$$ 
 Since $u\in \Ec_P^1(\CC^d)$, from Proposition \ref{pro: E1 characterization local} we have $a(t) \to 0$ as $t\to +\infty$. By Lemma \ref{lem: energy estimate} we have
 \begin{equation}\label{fromenest}
\sup_{j\geq 1}\int_{\{u\leq H_P-t\}} (H_P-u) (dd^c u_j)^d\leq  a(t). 
  \end{equation}
By the plurifine property of non-pluripolar Monge-Amp\`ere measures  \cite[Proposition 1.4]{[BEGZ]} and (\ref{fromenest}) we  have
\begin{flalign*}
	\int_{K} (H_P-u) (dd^c u_j)^d & \leq \int_{K \cap \{u>H_P-t\}} (H_P-u) (dd^c u_j)^d +a(t)\\
	& =\int_{K \cap \{u>H_P-t\}} (H_P-u^t) (dd^c u_j^t)^d +a(t)\\
	&\leq \int_{K} (H_P-u^t) (dd^c u_j^t)^d +a(t).
\end{flalign*}
Since $H_P$ is bounded in $\Omega$, it follows from \cite[Theorem 4.26]{GZ} that the sequence of positive Radon measures $(H_P-u^t) (dd^c u_j^t)^d$ converges weakly on $\Omega$ to $(H_P-u^t)(dd^c u^t)^d$. Since $K$ is compact it then follows that 
\begin{flalign*}
	\limsup_{j}\int_{K} (H_P-u) (dd^c u_j)^d  & \leq \int_K (H_P-u^t) (dd^c u^t)^d +a(t).
\end{flalign*}
We finally let $t\to +\infty$ to conclude the proof in the following manner: 
\begin{flalign*}
	\int_K (H_P-u^t) (dd^c u^t)^d &  \leq \int_{K\cap \{u>H_P-t\}} (H_P-u^t)(dd^c u^t)^d+a(t)\\
	& \leq \int_{K} (H_P-u)(dd^c u)^d +a(t),
\end{flalign*}
where in the first estimate we have used $\{u\leq H_P-t\}=\{u^t\leq H_P-t\}$ and Lemma \ref{lem: energy estimate} and in the last estimate we use again the plurifine property. 
\end{proof}

We now give an alternate description of the Legendre-type transform $E^*$ from (\ref{estar2}) which 
will be related to the the rate function in a large deviation principle. Given $K\subset \CC^d$ compact, we let ${\mathcal M}_P(K)$ denote the space of positive measures on $K$ of total mass $\gamma_d$ and we let $C(K)$ denote the set of continuous, real-valued functions on $K$.

\begin{proposition} \label{gap} Let $K$ be a nonpluripolar  compact set and $\mu  \in {\mathcal M}_P(K)$. Then
$$E^*(\mu)=\sup_{v\in C(K)} [E(V^*_{P,K,v})-\int_K v d\mu].$$
\end{proposition}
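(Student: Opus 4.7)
The plan is to prove both inequalities separately by short arguments based on the definitions and the monotonicity of the Monge-Amp\`ere energy.

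For $E^{*}(\mu) \geq \sup_{v\in C(K)} [E(V^*_{P,K,v}) - \int v\,d\mu]$, we may assume $E^{*}(\mu)<+\infty$, in which case $\mu$ must be nonpluripolar: if $\mu$ charged a pluripolar set, Lemma \ref{lem: pluripolar local} would supply $u\in \Ec^1_P(\CC^d)$ with $u=-\infty$ on a set of positive $\mu$-mass, forcing $E^*(\mu)=+\infty$. Fix $v\in C(K)$. Because $v$ is bounded on the nonpluripolar compact set $K$, comparison with $H_P$ places $V^{*}_{P,K,v}\in L_{P,+}(\CC^d)\subset \Ec^1_P(\CC^d)$. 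By the definition of $V_{P,K,v}$ together with its upper semicontinuous regularization, $V^{*}_{P,K,v}\leq v$ quasi-everywhere on $K$; since $\mu$ charges no pluripolar set this upgrades to $V^{*}_{P,K,v}\leq v$ $\mu$-almost everywhere, so $\int V^{*}_{P,K,v}\,d\mu \leq \int v\,d\mu$. Hence
\begin{equation*}
E(V^{*}_{P,K,v}) - \int_K v\,d\mu \leq E(V^{*}_{P,K,v})-\int_{\CC^d} V^{*}_{P,K,v}\,d\mu \leq E^{*}(\mu),
\end{equation*}
and taking the supremum over $v$ yields the first inequality.

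For the reverse inequality, fix $u\in \Ec^1_P(\CC^d)$. Since $u\in L_P$ and $H_P$ is bounded on $K$, $u|_K$ is upper semicontinuous and bounded above, so the inf-convolution $Q_j(z):=\sup_{w\in K}[u(w)-j|z-w|]$ produces continuous functions $Q_j\in C(K)$ with $Q_j\downarrow u|_K$ pointwise. Set $v_j:=V^{*}_{P,K,Q_j}$. The inequality $u\leq Q_j$ on $K$ shows $u$ is a competitor in the supremum defining $V_{P,K,Q_j}$, so $u\leq v_j$ on $\CC^d$. Monotonicity of the Monge-Amp\`ere energy on $\Ec^1_P(\CC^d)$, obtained by transferring \cite[Theorem 4.10]{DDL} via the identification (\ref{corresp}), gives $E(v_j)\geq E(u)$. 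Monotone convergence applied to $Q_1-Q_j\geq 0$ yields $\int_K Q_j\,d\mu \to \int_K u\,d\mu$ (possibly $-\infty$). Therefore
\begin{equation*}
\sup_{v\in C(K)}\Bigl[E(V^{*}_{P,K,v})-\int_K v\,d\mu\Bigr] \geq E(v_j)-\int_K Q_j\,d\mu \geq E(u)-\int_K Q_j\,d\mu,
\end{equation*}
and letting $j\to\infty$ shows the left side is bounded below by $E(u)-\int u\,d\mu$. Taking the supremum over $u\in \Ec^1_P(\CC^d)$ produces the desired inequality. This argument also covers $E^*(\mu)=+\infty$: if $\mu$ has a pluripolar part, choosing $u\in \Ec^1_P$ with $\int u\,d\mu=-\infty$ (via Lemma \ref{lem: pluripolar local}) drives the displayed lower bound to $+\infty$.

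The main obstacle is the monotonicity step $E(v_j)\geq E(u)$: both functions lie in $\Ec^1_P(\CC^d)$ but neither need be locally bounded, so the inequality must be extracted from the prescribed-singularity theory on $\PP^d$ via the correspondence (\ref{corresp}) rather than from classical bounded-psh arguments. Once this is in place, the rest is a direct chase through the definitions of $V^*_{P,K,v}$, $E^*(\mu)$, and a single application of monotone convergence handling the finite- and infinite-energy cases uniformly.
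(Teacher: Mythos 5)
Your proof is correct and follows essentially the same route as the paper: the forward inequality uses that $V^{*}_{P,K,v}\in L_{P,+}(\CC^d)\subset\Ec^1_P(\CC^d)$ is a competitor in the supremum defining $E^*(\mu)$ together with $V^{*}_{P,K,v}\leq v$ q.e.\ on $K$ (hence $\mu$-a.e., once $\mu$ is known to be nonpluripolar); the reverse uses $u$ as a competitor for $V_{P,K,Q_j}$ to get $E(V^{*}_{P,K,Q_j})\geq E(u)$, together with monotone convergence for $\int_K Q_j\,d\mu$. The only cosmetic difference is organizational: the paper splits into the cases $E^*(\mu)=+\infty$ and $E^*(\mu)<+\infty$ up front, whereas you prove the two inequalities separately and observe that the reverse-inequality argument handles the infinite case automatically (which makes your final "this argument also covers $E^*(\mu)=+\infty$" paragraph redundant, since the preceding sentence already gives the general statement). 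Your explicit inf-convolution construction of the $Q_j$ is also just a concrete realization of the standard fact, invoked without proof in the paper, that any usc function on a compact set is a decreasing limit of continuous functions.
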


\begin{proof} 
We first treat the case when $E^*(\mu)=+\infty$. By Theorem \ref{thm: existence in E1 local} there exists $u\in \Ec_P^1(\CC^d)$ such that  $\int_K u d\mu =-\infty$. We take a decreasing sequence  $Q_j \in C(K)$ such that $Q_j\downarrow u$ on $K$ and set $u_j:= V^*_{P,K,Q_j}$.  Then $\{u_j\}$ are decreasing; since $u\in \Ec_P^1(\CC^d)$ and $E$ is non-decreasing, $\{E(u_j)\}$ is uniformly bounded and we obtain 
$$
E(V^*_{P,K,Q_j}) -\int_K Q_j d\mu \to +\infty,
$$
proving the proposition in this case.

Assume now that $E^*(\mu)<+\infty$. Theorem \ref{thm: existence in E1 local} ensures that $\int_{\CC^d} u d\mu >-\infty$ for all $u \in \Ec^1_P(\CC^d)$. By Lemma \ref{lem: pluripolar local},  $\mu$ puts no mass on pluripolar sets. 
From monotonicity of $E$ and the definition of $E^*$ in (\ref{estar2}) we have
$$E^*(\mu)\geq \sup_{v\in C(K)}[E(V^*_{P,K,v})-\int_K v d\mu].$$
Here we have used that 
$$V^*_{P,K,v}\leq v \ \hbox{q.e. on} \ K \   \hbox{for} \ v\in C(K).$$ 
For the reverse inequality, fix $u\in \Ec_P^1(\CC^d)$.  Let $\{Q_j\}$ be a sequence of continuous functions on $K$ decreasing to $u$ on $K$ and set $u_j:= V^*_{P,K,Q_j}$.  Given $\epsilon >0$, we can choose $j$ sufficiently large so that, by monotone convergence,
$$\int_K Q_j d\mu \leq  \int_K u d\mu + \epsilon;$$
and, by monotonicity of $E$, 
$$E(V^*_{P,K,Q_j})\geq E(u).$$
Hence
$$E(V^*_{P,K,Q_j}) - \int_K Q_j d\mu \geq E(u)- \int_K u d\mu -\epsilon$$
so that 
$$\sup_{v\in C(K)}[E(V^*_{P,K,v})-\int_K v d\mu]\geq E^*(\mu)$$
and equality holds.

\end{proof}

 \subsection{Transfinite diameter.}

Let $d_n=d_n(P)$ denote the dimension of the vector space $Poly(nP)$. We write
$$Poly(nP)= \hbox{span} \{e_1,...,e_{d_n}\}$$ 
where $\{e_j(z):=z^{\alpha(j)}\}_{j=1,...,d_n}$ are the standard basis monomials. Given $\zeta_1,...,\zeta_{d_n}\in \CC^d$, let
\begin{equation} \label{vdm}VDM(\zeta_1,...,\zeta_{d_n}):=\det [e_i(\zeta_j)]_{i,j=1,...,d_n}  \end{equation}
$$= \det
\left[
\begin{array}{ccccc}
 e_1(\zeta_1) &e_1(\zeta_2) &\ldots  &e_1(\zeta_{d_n})\\
  \vdots  & \vdots & \ddots  & \vdots \\
e_{d_n}(\zeta_1) &e_{d_n}(\zeta_2) &\ldots  &e_{d_n}(\zeta_{d_n})
\end{array}
\right]$$
and for $K\subset \CC^d$ compact let
$$V_n =V_n(K):=\max_{\zeta_1,...,\zeta_{d_n}\in K}|VDM(\zeta_1,...,\zeta_{d_n})|.$$
It was shown in \cite{BBL} that
\begin{equation} \label{tdlim} \delta(K):=\delta(K,P):= \lim_{n\to \infty}V_{n}^{1/l_n} \end{equation} exists where 
$$l_n:=\sum_{j=1}^{d_n} {\rm deg}(e_j)= \sum_{j=1}^{d_n} |\alpha(j)|$$ is the sum of the degrees of the basis monomials for $ Poly(nP)$. We call $\delta(K)$ the {\it $P-$transfinite diameter} of $K$. More generally, for $w$ an admissible weight function on
$K$ and $\zeta_1,...,\zeta_{d_n}\in K$, let
\begin{equation}\label{vdmq} VDM_n^Q(\zeta_1,...,\zeta_{d_n}):=VDM(\zeta_1,...,\zeta_{d_n})w(\zeta_1)^{n}\cdots w(\zeta_{d_n})^{n}\end{equation}
$$= \det
\left[
\begin{array}{ccccc}
 e_1(\zeta_1) &e_1(\zeta_2) &\ldots  &e_1(\zeta_{d_n})\\
  \vdots  & \vdots & \ddots  & \vdots \\
e_{d_n}(\zeta_1) &e_{d_n}(\zeta_2) &\ldots  &e_{d_n}(\zeta_{d_n})
\end{array}
\right]\cdot w(\zeta_1)^{n}\cdots w(\zeta_{d_n})^{n}$$
be a {\it weighted Vandermonde determinant}. Let
$$W_n(K):=\max_{\zeta_1,...,\zeta_{d_n}\in K}|VDM_n^Q(\zeta_1,...,\zeta_{d_n})|.
$$
An {\it $n-$th weighted $P-$Fekete set for $K$ and $w$} is a set of $d_n$ points $\zeta_1,...,\zeta_ {d_n}\in K$ with the property that
$$|VDM_n^Q(\zeta_1,...,\zeta_{d_n})|=W_n(K).$$

The limit
$$ \delta^Q(K):=\delta^Q(K,P):=\lim_{n\to \infty}W_{n}(K)^{1/l_n} $$
exists and is called  the {\it weighted $P-$transfinite diameter}. The following was proved in \cite{BBL}.
 
 \begin{theorem} \label{asympwtdfek} {\bf [Asymptotic Weighted $P-$Fekete Measures]} Let $K\subset \CC^d$ be compact with admissible weight $w$. For each $n$, take points $z_1^{(n)},z_2^{(n)},\cdots,z_{d_n}^{(n)}\in K$ for which 
\begin{equation}\label{wam}
 \lim_{n\to \infty}\bigl[|VDM_n^Q(z_1^{(n)},\cdots,z_{d_n}^{(n)})|\bigr]^{1 \over  l_n}=\delta^Q(K)
\end{equation}
({\it asymptotically} weighted $P-$Fekete arrays) and let $\mu_n:= \frac{1}{d_n}\sum_{j=1}^{d_n} \delta_{z_j^{(n)}}$. Then
$$
\mu_n \to \frac{1}{\gamma_d}\mu_{K,Q} \ \hbox{weak}-*.
$$
\end{theorem}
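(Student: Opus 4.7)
My plan is a standard variational argument in the spirit of weighted potential theory. By weak-$*$ compactness of probability measures on the compact set $K$, extract a subsequential limit $\mu_n \to \mu$; it suffices to identify $\mu = \frac{1}{\gamma_d}\mu_{K,Q}$ for every such subsequential limit, since this forces convergence of the full sequence.

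The identification proceeds by showing that any such $\mu$ achieves the supremum
$$\sup_{\nu} \Bigl[\, c\log \delta^Q(K) - E^*(\nu) - \gamma_d \int_K Q\,d\nu\,\Bigr] = 0,$$
for an appropriate normalizing constant $c$, with the unique maximizer being $\frac{1}{\gamma_d}\mu_{K,Q}$. The $\leq 0$ direction exploits the Fekete asymptotic (\ref{wam}): for arbitrary $v\in C(K)$, expand $VDM_n^Q$ in an $L^2$-orthonormal basis of $Poly(nP)$ with respect to a reference measure on $K$ satisfying an appropriate Bernstein-Markov-type estimate (as developed in \cite{BBL}), bound each $\frac{1}{n}\log|p_j(z)|$ pointwise from above by $V^*_{P,K,v}(z) + v(z) + o(1)$, and sum. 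After normalizing by $l_n$ and using the asymptotic $l_n/(nd_n) \to c$ from \cite{BBL}, one is led to
$$\log \delta^Q(K) \leq \frac{1}{c}\Bigl[\, E(V^*_{P,K,v}) + \gamma_d \int_K (v-Q)\,d\mu\,\Bigr],$$
valid for every $v\in C(K)$. Taking the supremum over $v$ and invoking Proposition \ref{gap} gives the claim. The reverse inequality ($\geq 0$) comes from the well-known explicit lower bound realized by $\frac{1}{\gamma_d}\mu_{K,Q}$ itself, available in \cite{BBL}.

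The main obstacle is the Bernstein-Markov step combined with careful tracking of the normalizing constants: one must verify that the $L^2 \to L^\infty$ comparison does not disturb the $l_n$-th root asymptotics, and that summing the factorwise inequalities telescopes correctly into an $E^*(\mu) + \gamma_d \int_K Q\,d\mu$ expression when interpreted through Proposition \ref{gap}. Uniqueness of the extremizer then follows from strict concavity of $\nu \mapsto -E^*(\nu)$ on probability measures on $K$, inherited from concavity of $E$ along affine curves. Two technical subtleties in the finite-energy framework are handled by earlier results: Lemma \ref{lem: pluripolar local} ensures that $\mu$ places no mass on pluripolar sets (so $V^*_{P,K,Q}$ is $\mu$-integrable and the variational identity applies), while Proposition \ref{goodapprox} allows passage from continuous-weight extremals $V^*_{P,K,v}$ (used as test functions) to the possibly singular $V^*_{P,K,Q}$ in the limit.
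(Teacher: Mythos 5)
The paper does not prove Theorem \ref{asympwtdfek}; it cites the result from \cite{BBL}. The machinery the paper imports from \cite{BBL}, namely the Rumely-type formula (Theorem \ref{energyrumely}) and the differentiability of $t\mapsto E(V^*_{P,K,Q+tv})$ (Proposition \ref{diffpropcor}), indicates that the proof in \cite{BBL} is a ``differentiation at the maximum'' argument: fix a subsequential weak-$*$ limit $\mu$ of $\{\mu_n\}$ and $v\in C(K)$; since $|VDM_n^{Q+tv}(z^{(n)})| = |VDM_n^Q(z^{(n)})|\exp(-nt d_n \int v\,d\mu_n)$ and $|VDM_n^{Q+tv}(z^{(n)})|\leq W_n^{Q+tv}(K)$, passing to the limit using \eqref{wam} and \eqref{key2} gives $g(t):=\log\delta^{Q+tv}(K)\geq g(0) - t\, b_d\gamma_d\int v\,d\mu$ for all real $t$; since $g$ is differentiable at $0$, this forces $g'(0)=-b_d\gamma_d\int v\,d\mu$; on the other hand \eqref{enrum2} plus Proposition \ref{diffpropcor} give $g'(0)=-b_d\int v\,d\mu_{K,Q}$, so $\int v\,d\mu=\frac{1}{\gamma_d}\int v\,d\mu_{K,Q}$ for every $v\in C(K)$, identifying $\mu$. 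This is short and uses no uniqueness statement about maximizers.

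Your proposal instead characterizes $\mu$ as the unique extremizer of a weighted-energy functional. That route has two genuine gaps. First, your justification of uniqueness — ``strict concavity of $\nu\mapsto -E^*(\nu)$ inherited from concavity of $E$ along affine curves'' — is not a valid inference: concavity of $E$ only gives convexity of the Legendre dual $E^*$ (it is automatically a supremum of affine functionals of $\mu$); \emph{strict} convexity of $E^*$ is a considerably stronger statement, essentially equivalent to differentiability of $E$ together with injectivity of the ``gradient'' map $u\mapsto(dd^cu)^d$, i.e., uniqueness modulo constants of solutions to the Monge-Amp\`ere equation in $\Ec^1_P(\CC^d)$. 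You would need to prove this, and you would also need to establish a priori that the subsequential limit $\mu$ has $E^*(\mu)<\infty$ so that the variational framework applies. Note that in the present paper the analogous uniqueness statement (Proposition \ref{prop43}: $\mu_{K,Q}$ is the unique maximizer of $J^Q$) is \emph{deduced from} Theorem \ref{asympwtdfek}, so invoking such a uniqueness statement here risks circularity unless one gives an independent argument. Second, the pointwise Bernstein--Walsh-type bound you write, $\frac{1}{n}\log|p_j(z)|\leq V^*_{P,K,v}(z)+v(z)+o(1)$, is incorrect: if $\|p_je^{-nv}\|_K\leq M_n$ then one gets $\frac{1}{n}\log|p_j|\leq V^*_{P,K,v}+\frac{1}{n}\log M_n$ on $\CC^d$ (and hence $\leq v+o(1)$ on the relevant part of $K$), but not the sum of the two. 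These are not merely cosmetic; they are the heart of the inequality you need. The differentiation-at-the-max argument avoids both issues entirely.
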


 Another ingredient we will use is a Rumely-type relation between transfinite diameter and energy of $V_{P,K,Q}^*$ from \cite{BBL}.

\begin{theorem} \label{energyrumely}
Let $K\subset \CC^d$ be compact and $w=e^{-Q}$ with $Q\in C(K)$. Then
\begin{equation} \label{enrum}
\log \delta^Q(K)=  \frac{-1}{\gamma_ddA}\mathcal E(V_{P,K,Q}^*,H_P)=  \frac{-(d+1)}{\gamma_ddA}E(V_{P,K,Q}^*).
\end{equation}
\end{theorem}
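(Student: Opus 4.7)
\textbf{Plan for Theorem \ref{energyrumely}.} The second equality is immediate from the definitions of the mutual energy and of $E$: since $\mathcal{E}(u,H_P)=\sum_{j=0}^d\int(u-H_P)(dd^cu)^j\wedge(dd^cH_P)^{d-j}=(d+1)E(u)$, the two right-hand sides of \eqref{enrum} agree. So the real content is the first equality, and my plan is to establish it by differentiating along the affine family of weights $Q_t:=tQ$ for $t\in[0,1]$ and then integrating, with the unweighted case $t=0$ serving as the base of the argument.

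\textbf{Step 1 (base case).} First I would prove the unweighted Rumely-type identity $\log\delta(K)=-\frac{1}{\gamma_d dA}\mathcal{E}(V^*_{P,K},H_P)$. Plugging $Q\equiv 0$ into Theorem \ref{asympwtdfek} gives that normalized counting measures of unweighted asymptotically Fekete arrays converge weak-$*$ to $\mu_K/\gamma_d$. One then writes $\tfrac{1}{l_n}\log V_n(K)$ as a double sum of pairwise logarithmic interactions, passes to the limit via the weak-$*$ convergence above, and recognizes the resulting pairing as a Monge--Amp\`ere energy through the correspondence (\ref{corresp}) with $PSH(\PP^d,\omega)$ (where integration by parts is standard).

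\textbf{Step 2 (derivatives match).} Set $\Phi(t):=\log\delta^{tQ}(K)$ and $\Psi(t):=-\frac{1}{\gamma_d dA}\mathcal{E}(V^*_{P,K,tQ},H_P)$. For $\Phi$, fix $t$, pick $n$-th weighted $P$-Fekete arrays $z^{(n,t)}=(z^{(n,t)}_1,\dots,z^{(n,t)}_{d_n})$ for the weight $tQ$, and apply the envelope theorem to the max characterization of $W_n(K,tQ)$ to obtain
$$\frac{d}{dt}\frac{1}{l_n}\log W_n(K,tQ)=-\frac{nd_n}{l_n}\cdot\frac{1}{d_n}\sum_{j=1}^{d_n}Q(z_j^{(n,t)}).$$
Theorem \ref{asympwtdfek} identifies the limit of $\frac{1}{d_n}\sum_j\delta_{z_j^{(n,t)}}$ as $\mu_{K,tQ}/\gamma_d$, while $l_n/(nd_n)$ tends to a constant determined by $P$ (this is what fixes $A$). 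Hence $\Phi'(t)$ is a specific scalar multiple of $\int Q\,d\mu_{K,tQ}$. For $\Psi$, I would differentiate $E(V^*_{P,K,tQ})$ using \eqref{diff02}, the chain rule, and the fact that $V^*_{P,K,tQ}=tQ$ quasi-everywhere on $\mathrm{supp}(\mu_{K,tQ})$ (Proposition 2.5 of \cite{BBL}, recalled in the excerpt); this produces $\frac{d}{dt}E(V^*_{P,K,tQ})=\int Q\,d\mu_{K,tQ}$, so that $\Psi'(t)$ is the same multiple of $\int Q\,d\mu_{K,tQ}$ provided $A$ is normalized so the $(d+1)/d$ prefactors cancel against the limit of $nd_n/l_n$. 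Integrating from $0$ to $1$ and using Step 1 concludes the proof.

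\textbf{Expected obstacle.} The most delicate point is justifying the interchange of differentiation and limit in the computation of $\Phi'$, because the Fekete arrays $z^{(n,t)}$ may depend on $t$ in a discontinuous fashion. The natural way out is to exploit that $t\mapsto\tfrac{1}{l_n}\log W_n(K,tQ)$ is concave (maximum of affine functions of $t$), converges pointwise to the concave $\Phi$, and so derivatives converge almost everywhere in $t$ by Alexandrov/convex-analytic theorems; the identification of the limit derivative with $-\tfrac{1}{A\gamma_d}\int Q\,d\mu_{K,tQ}$ is then forced by the weak-$*$ convergence above. A cleaner alternative is to invoke Proposition \ref{gap}: both $\Phi$ and $\Psi$ (after rescaling) can be expressed through the same Legendre-type transform $E^*$, and matching their values is reduced to Step 1 together with Theorem \ref{thm: existence in E1 local}.
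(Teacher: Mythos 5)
The paper itself does not prove Theorem \ref{energyrumely}: it is imported from \cite{BBL}, where the Rumely-type identity for convex bodies is established (following the Berman--Boucksom strategy for the classical $P=\Sigma$ case). So the comparison is to what that proof must contain, not to anything written in this document.

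Your observation that $\mathcal E(u,H_P)=(d+1)E(u)$ is correct, so the second equality in \eqref{enrum} is indeed immediate from the two definitions. Your Step~2 is also sound in outline: the envelope derivative of $t\mapsto\frac{1}{l_n}\log W_n(K,tQ)$ at an $n$-th Fekete configuration, combined with the weak-$*$ limit from Theorem \ref{asympwtdfek} and the normalization $nd_n/l_n\to b_d\gamma_d=(d+1)/(Ad)$ from \eqref{key2}, gives $\Phi'(t)=-\frac{d+1}{\gamma_d dA}\int_K Q\,d\mu_{K,tQ}$ for a.e.\ $t$; and Proposition \ref{diffpropcor} (together with $\mathcal E(\cdot,H_P)=(d+1)E(\cdot)$) gives exactly the same expression for $\Psi'(t)$. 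The concavity of each $\Phi_n$ and of $\Phi$ in $t$, which you correctly invoke, controls the interchange of limit and derivative and the absolute continuity needed to integrate. So the mechanism for promoting the unweighted identity to the weighted one is fine.

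The fatal gap is Step~1. You assert that $\frac{1}{l_n}\log V_n(K)$ ``can be written as a double sum of pairwise logarithmic interactions'' and that the limit is then read off from weak-$*$ convergence of the Fekete measures. That factorization exists only when $d=1$, where $VDM(\zeta_1,\dots,\zeta_n)=\prod_{i<j}(\zeta_i-\zeta_j)$. For $d\geq 2$ the generalized Vandermonde $\det[e_i(\zeta_j)]$ does not factor, and $\log|VDM|$ has no representation as a discrete pair energy; the absence of such a factorization is precisely what makes the unweighted Rumely formula a hard theorem in several variables, requiring $L^2$/Bergman-type comparisons on the polynomial spaces $Poly(nP)$ rather than a pointwise energy decomposition of the determinant. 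Moreover, even where a pair-energy representation does exist ($d=1$), weak-$*$ convergence $\mu_n\to\mu$ alone does not yield convergence of $\iint\log|z-w|\,d\mu_n(z)\,d\mu_n(w)$, since the logarithmic energy is only semicontinuous; identifying the limit is a separate piece of work. Because Step~1 is unproved, Step~2 has nothing to anchor to, and your ``cleaner alternative'' via Proposition~\ref{gap} and Theorem~\ref{thm: existence in E1 local} likewise reduces to the same missing base case.
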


\noindent Here $A=A(P,d)$ was defined in \cite{BBL}; we recall the definition. For $P=\Sigma$ so that $Poly(n\Sigma)=\mathcal P_n$, we have 
$$d_n(\Sigma)={d+n \choose d}=0(n^d/d!) \ \hbox{and} \ l_n(\Sigma)= \frac{d}{d+1}nd_n(\Sigma).$$
For a convex body $P\subset (\RR^+)^d$, define $f_n(d)$ by writing
$$
l_n = f_n(d)  \frac{nd}{d+1}d_n = f_n(d) \frac{l_n (\Sigma)}{d_n(\Sigma)}d_n.
$$
Then the ratio $l_n/d_n$ divided by $l_n (\Sigma)/d_n(\Sigma)$ has a limit; i.e., 
\begin{equation}\label{key}\lim_{n\to \infty} f_n(d)=: A= A(P,d).\end{equation}

\subsection{Bernstein-Markov}

For $K\subset \CC^d$ compact, $w=e^{- Q}$ an admissible weight function on $K$, and $\nu$ a finite measure on $K$, we say that the triple $(K,\nu,Q)$ satisfies a weighted Bernstein-Markov property if for all $p_n\in \mathcal P_n$, 
\begin{equation}\label{wtdbm}||w^np_n||_K \leq M_n ||w^np_n||_{L^2(\nu)} \ \hbox{with} \ \limsup_{n\to \infty} M_n^{1/n} =1.\end{equation}
Here, $||w^np_n||_K:=\sup_{z\in K} |w(z)^np_n(z)|$ and 
$$ ||w^np_n||_{L^2(\nu)}^2:=\int_K |p_n(z)|^2  w(z)^{2n} d\nu(z).$$ 
Following \cite{Bay}, given $P\subset (\RR^+)^d$ a convex body, we say that a finite measure $\nu$ with support in a compact set $K$ is a Bernstein-Markov measure for the triple $(P,K,Q)$ if (\ref{wtdbm}) holds for all $p_n\in Poly(nP)$. 

For any $P$ there exists $A=A(P)>0$ with $Poly(nP) \subset \mathcal P_{An}$ for all $n$. Thus if $(K,\nu,Q)$ satisfies a weighted Bernstein-Markov property, then $\nu$ is a Bernstein-Markov measure for $(P,K,\tilde Q)$ where $\tilde Q = AQ$. In particular, if $\nu$ is a {\it strong Bernstein-Markov measure} for $K$; i.e., if $\nu$ is a weighted Bernstein-Markov measure for any $Q\in C(K)$, then for any such $Q$, $\nu$ is a Bernstein-Markov measure for the triple $(P,K,Q)$. Strong Bernstein-Markov measures exist for any nonpluripolar compact set; cf., Corollary 3.8 of \cite{PELD}. The paragraph following this corollary gives a sufficient mass-density type condition for a measure to be a strong Bernstein-Markov measure.
 
 Given $P$, for $\nu$ a finite measure on $K$ and $Q\in \mathcal A(K)$, define 
\begin{equation}\label{wtdzn}Z_n:=Z_n(P,K,Q,\nu):= \int_K \cdots \int_K |VDM_n^Q(z_1,...,z_{d_n}
)|^2 d\nu(z_1) \cdots d\nu(z_{d_n}).\end{equation}
The main consequence of using a Bernstein-Markov measure for $(P,K,Q)$ is the following:

\begin{proposition}
\label{weightedtd}
Let $K\subset \CC^d$ be a compact set and let $Q\in \mathcal A(K)$. If $\nu$ is a Bernstein-Markov measure for $(P,K,Q)$ then 
\begin{equation}
\label{zeen}\lim_{k\to \infty} Z_n^{\frac{1}{2l_n}
} =  \delta^Q(K).\end{equation}\end{proposition}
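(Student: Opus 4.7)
My plan is to sandwich $Z_n^{1/(2l_n)}$ between two quantities both tending to $\delta^Q(K)$: a trivial sup bound gives the upper inequality, and an iterated application of the Bernstein--Markov property in each of the $d_n$ variables gives the matching lower inequality.

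\textbf{Upper bound.} The trivial pointwise estimate $|VDM_n^Q(z_1,\dots,z_{d_n})|^2\le W_n(K)^2$ on $K^{d_n}$ integrates to $Z_n\le W_n(K)^2\,\nu(K)^{d_n}$, so
$$Z_n^{1/(2l_n)}\le W_n(K)^{1/l_n}\,\nu(K)^{d_n/(2l_n)}.$$
Since $d_n$ grows like $n^d$ while $l_n$ grows like $n^{d+1}$ (cf.\ \eqref{key}), the ratio $d_n/l_n\to 0$, so $\nu(K)^{d_n/(2l_n)}\to 1$ (the case $\nu(K)=0$ is vacuous). Combined with the existence of $\delta^Q(K)$, this gives $\limsup_n Z_n^{1/(2l_n)}\le \delta^Q(K)$.

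\textbf{Lower bound via iterated Bernstein--Markov.} For each $n$, compactness of $K$ yields a weighted $P$-Fekete $d_n$-tuple $\zeta_1,\dots,\zeta_{d_n}\in K$ with $|VDM_n^Q(\zeta_1,\dots,\zeta_{d_n})|=W_n(K)$. The essential structural remark is that, viewed as a function of any single argument $z_j$ with the remaining entries fixed, $VDM_n^Q(z_1,\dots,z_{d_n})$ has the form $w(z_j)^n P_j(z_j)$ times a constant, where $P_j\in Poly(nP)$; this is immediate from cofactor expansion along the $j$-th column of the determinant in \eqref{vdm}. I then iterate over $j=1,\dots,d_n$: at stage $j$, I bound the current expression first by its sup over $z_j\in K$, and then by $M_n^2$ times the $L^2(\nu)$-integral in $z_j$ via \eqref{wtdbm} applied to the weighted polynomial $w^n P_j$. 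After $d_n$ stages every Fekete point has been replaced by an integration variable, yielding
$$W_n(K)^2 \le M_n^{2d_n}\int_K\!\cdots\!\int_K |VDM_n^Q(z_1,\dots,z_{d_n})|^2\,d\nu(z_1)\cdots d\nu(z_{d_n}) = M_n^{2d_n}\,Z_n.$$

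\textbf{Conclusion and main obstacle.} Extracting $2l_n$-th roots gives $W_n(K)^{1/l_n}\le M_n^{d_n/l_n}\,Z_n^{1/(2l_n)}$. Writing $M_n^{d_n/l_n}=(M_n^{1/n})^{nd_n/l_n}$, the hypothesis $M_n^{1/n}\to 1$ combined with the boundedness $nd_n/l_n\to (d+1)/(dA)$ (from \eqref{key}) forces $M_n^{d_n/l_n}\to 1$, so $\liminf_n Z_n^{1/(2l_n)}\ge \delta^Q(K)$, matching the upper bound. The main obstacle is the iterated Bernstein--Markov step: I must verify carefully that at each stage the function in the free variable is genuinely a weighted $Poly(nP)$-polynomial (so that \eqref{wtdbm} applies) and that the sup over $K$ legitimately dominates the value at the Fekete point currently being eliminated. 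Once this multilinear structure of the Vandermonde is unpacked, the rest reduces to bookkeeping of exponents governed by the asymptotic $l_n/d_n\sim ndA/(d+1)$.
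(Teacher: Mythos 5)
Your proposal is correct and follows essentially the same route as the paper's proof: a trivial sup bound for the upper inequality, and iterated application of the Bernstein--Markov inequality in each variable, starting from a weighted $P$-Fekete tuple and using the multilinear structure of the Vandermonde (cofactor expansion in a single column gives a $Poly(nP)$ polynomial), for the lower inequality. You have simply spelled out the exponent bookkeeping ($d_n/l_n\to 0$, $nd_n/l_n$ bounded) that the paper leaves implicit; one small imprecision is the claim $M_n^{d_n/l_n}\to 1$, since the hypothesis only gives $\limsup_n M_n^{1/n}=1$ and hence $\limsup_n M_n^{d_n/l_n}\le 1$, but that one-sided estimate is all that is needed for the $\liminf$ bound.
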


\begin{proof} That $\limsup_{k\to \infty} Z_n^{\frac{1}{2l_n}
} \leq \delta^Q(K)$ is clear. Observing from (\ref{vdm}) and (\ref{vdmq}) that, fixing all variables but $z_j$, 
$$z_j\to VDM_n^Q(z_1,...,z_j,...,z_{d_n})=w(z_j)^np_n(z_j)$$ for some $p_n\in Poly(nP)$, to show $\liminf_{k\to \infty} Z_n^{\frac{1}{2l_n}
} \geq  \delta^Q(K)$ one starts with an $n-$th weighted $P-$Fekete set for $K$ and $w$ and repeatedly applies the weighted Bernstein-Markov property.
\end{proof}

Recall ${\mathcal M}_P(K)$ is the space of positive measures on $K$ with total mass $\gamma_d$. With the weak-* topology, this is a separable, complete metrizable space. A neighborhood basis 
of $\mu \in {\mathcal M}_P(K)$ can be given by sets 
\begin{equation}\label{nbhdbase}G(\mu, k, \epsilon) := \{\sigma  \in {\mathcal M}_P(K):
|\int_K (\Re z)^{\alpha}(\Im z)^{\beta} (d\mu - d\sigma )| < \epsilon\end{equation}
$$\hbox{for} \ 0\leq |\alpha|+|\beta| \leq k\}$$
where $\Re z =(\Re z_1,..., \Re z_n)$ and $\Im z =(\Im z_1,..., \Im z_n)$.

Given $\nu$ as in Proposition \ref{weightedtd}, we define a probability measure $Prob_n$ on $K^{d_n}$ via, for a Borel set $A\subset K^{d_n}$,
\begin{equation}\label{probk}Prob_n(A):=\frac{1}{Z_n}\cdot \int_A  |VDM_n^Q(z_1,...,z_{d_n
})|^2 \cdot d\nu(z_1) \cdots d\nu(z_{d_n
}).\end{equation}
We immediately obtain the following:
		
\begin{corollary} \label{largedev} Let $\nu$ be a Bernstein-Markov measure for $(P,K,Q)$. Given $\eta >0$, define
 \begin{equation}\label{aketa}A_{n,\eta}:=\{(z_1,...,z_{d_n})\in K^{d_n}: |VDM_n^Q(z_1,...,z_{d_n})|^2 \geq (\delta^Q(K) -\eta)^{2l_n}\}.\end{equation}
Then there exists $n^*=n^*(\eta)$ such that for all $n>n^*$, 
$$Prob_n(K^{d_n}\setminus A_{n,\eta})\leq \left(1-\frac{\eta}{2 \delta^Q(K)}\right)^{2l_n}.$$
	\end{corollary}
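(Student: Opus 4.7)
The proof is a short, self-contained calculation from the definition of $Prob_n$ combined with Proposition \ref{weightedtd}. Here is the plan.

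First, I would observe that $Prob_n$ as defined by \eqref{probk} is invariant under rescaling $\nu$ by a positive constant, since $\nu$ appears in both numerator and denominator to the same power $d_n$. So I can assume without loss of generality that $\nu(K)=1$. Then, by the very definition of $A_{n,\eta}$, on its complement $K^{d_n}\setminus A_{n,\eta}$ the integrand in \eqref{probk} satisfies $|VDM_n^Q|^2<(\delta^Q(K)-\eta)^{2l_n}$; integrating against the probability measure $\nu^{\otimes d_n}$ gives the upper bound
$$
\int_{K^{d_n}\setminus A_{n,\eta}} |VDM_n^Q(z_1,\dots,z_{d_n})|^2\, d\nu(z_1)\cdots d\nu(z_{d_n}) \leq (\delta^Q(K)-\eta)^{2l_n}.
$$

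Second, I would use Proposition \ref{weightedtd}, which gives $Z_n^{1/(2l_n)}\to\delta^Q(K)$, to produce a matching lower bound on the denominator. Concretely, choose $n^*=n^*(\eta)$ so that for all $n\geq n^*$ one has $Z_n^{1/(2l_n)}\geq \delta^Q(K)-\eta/2$, equivalently $Z_n\geq (\delta^Q(K)-\eta/2)^{2l_n}$. Dividing, for $n\geq n^*$,
$$
Prob_n(K^{d_n}\setminus A_{n,\eta}) \leq \left(\frac{\delta^Q(K)-\eta}{\delta^Q(K)-\eta/2}\right)^{2l_n}.
$$

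Third, the elementary inequality $\dfrac{\delta^Q(K)-\eta}{\delta^Q(K)-\eta/2}=1-\dfrac{\eta/2}{\delta^Q(K)-\eta/2}\leq 1-\dfrac{\eta}{2\,\delta^Q(K)}$ (since $\delta^Q(K)-\eta/2<\delta^Q(K)$) finishes the proof. I expect no real obstacle here: the only non-trivial ingredient is the Bernstein--Markov-based asymptotic $Z_n^{1/(2l_n)}\to\delta^Q(K)$ from Proposition \ref{weightedtd}, which has already been established. The rest is a Chebyshev-style bound combined with a one-line numerical comparison.
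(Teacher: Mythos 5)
Your proof is correct and follows essentially the same route as the paper, which simply cites Corollary 3.2 of \cite{PELD} together with the rescaling observation you make explicit (the remark after the corollary notes exactly that \cite{PELD} treated the case $\nu(K)=1$ and that the general case follows by the obvious modification). The Chebyshev-style bound on the numerator, the lower bound $Z_n \geq (\delta^Q(K)-\eta/2)^{2l_n}$ from Proposition \ref{weightedtd}, and the elementary comparison of ratios are exactly the intended steps. One small point worth being aware of: the argument as written presupposes $\eta$ is small enough that $\delta^Q(K)-\eta/2>0$ (so that taking $2l_n$-th powers is legitimate and the denominator estimate is meaningful); this is harmless, since the corollary is only ever invoked with $\eta\to 0$ (cf.\ the choice of $\eta_n$ in \eqref{etan}), but it is worth noting if you want the statement to read cleanly for all $\eta>0$.
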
	
	
{\begin{remark} Corollary \ref{largedev} was proved in \cite{PELD}, Corollary 3.2, for $\nu$ a probability measure but an obvious modification works for $\nu(K)<\infty$.

\end{remark}}
		
	Using (\ref{probk}), we get an induced probability measure ${\bf P}$ on the infinite product space of arrays $\chi:=\{X=\{x_j^{(n)}\}_{n=1,2,...; \ j=1,...,d_n}: x_j^{(n)} \in K\}$: 
	$$(\chi,{\bf P}):=\prod_{n=1}^{\infty}(K^{d_n},Prob_n).$$
	
\begin{corollary} \label{niceprob} 
Let $\nu$ be a Bernstein-Markov measure for $(P,K,Q)$. For ${\bf P}$-a.e. array $X=\{x_j^{(n)}\}\in \chi$, 
$$\nu_n :=\frac{1}{d_n}\sum_{j=1}^{d_n}\delta_{x_j^{(n)}}\to \frac{1}{\gamma_d}\mu_{K,Q} \ \hbox{weak-*}.$$
\end{corollary}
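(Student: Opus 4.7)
The plan is to combine the quantitative probability estimate of Corollary \ref{largedev} with a Borel--Cantelli argument to deduce that ${\bf P}$-almost every array is asymptotically weighted $P$-Fekete in the sense of \eqref{wam}, and then to invoke Theorem \ref{asympwtdfek} to conclude weak-$*$ convergence of the empirical measures.

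First I would fix a countable sequence $\eta_k \downarrow 0$, say $\eta_k = 1/k$, with each $\eta_k < 2\delta^Q(K)$. For each such $\eta_k$, Corollary \ref{largedev} yields an integer $n^* = n^*(\eta_k)$ so that for $n > n^*$,
\begin{equation*}
Prob_n(K^{d_n} \setminus A_{n,\eta_k}) \leq \left(1 - \frac{\eta_k}{2\delta^Q(K)}\right)^{2 l_n}.
\end{equation*}
Since $l_n \to \infty$ as $n\to\infty$ and $1 - \eta_k/(2\delta^Q(K)) \in (0,1)$, the series $\sum_n Prob_n(K^{d_n}\setminus A_{n,\eta_k})$ converges. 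By the Borel--Cantelli lemma applied on the product space $(\chi,{\bf P})$, the set
\begin{equation*}
B_k := \{X = \{x_j^{(n)}\} \in \chi : (x_1^{(n)},\ldots,x_{d_n}^{(n)}) \in A_{n,\eta_k} \text{ for all sufficiently large } n\}
\end{equation*}
has full ${\bf P}$-measure. Taking $\chi_0 := \bigcap_{k\geq 1} B_k$, which is also of full measure, I get that for every $X \in \chi_0$ and every $k$,
\begin{equation*}
\liminf_{n\to\infty} |VDM_n^Q(x_1^{(n)},\ldots,x_{d_n}^{(n)})|^{1/l_n} \geq \delta^Q(K) - \eta_k.
\end{equation*}
Letting $k\to\infty$ and combining with the trivial upper bound $|VDM_n^Q|^{1/l_n} \leq W_n(K)^{1/l_n} \to \delta^Q(K)$, we conclude that for every $X \in \chi_0$,
\begin{equation*}
\lim_{n\to\infty} |VDM_n^Q(x_1^{(n)},\ldots,x_{d_n}^{(n)})|^{1/l_n} = \delta^Q(K),
\end{equation*}
so $X$ is an asymptotically weighted $P$-Fekete array in the sense of \eqref{wam}.

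Finally, Theorem \ref{asympwtdfek} applies directly to each such array, giving $\nu_n = \frac{1}{d_n}\sum_{j=1}^{d_n}\delta_{x_j^{(n)}} \to \frac{1}{\gamma_d}\mu_{K,Q}$ weak-$*$. Since this holds for every $X$ in the full-measure set $\chi_0$, the corollary follows. The only subtle point is the use of a countable sequence $\eta_k$ to pass from the single-$\eta$ estimate of Corollary \ref{largedev} to a statement that holds on a single full-measure set; this is routine, and the rest of the argument is a direct invocation of the two earlier results.
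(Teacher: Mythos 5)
Your proof is correct and takes essentially the same route as the paper: apply Corollary \ref{largedev} to bound $Prob_n(K^{d_n}\setminus A_{n,\eta})$, sum over $n$ and invoke Borel--Cantelli, then feed the resulting asymptotically weighted $P$-Fekete property into Theorem \ref{asympwtdfek}. The only cosmetic difference is that you make explicit the countable intersection over $\eta_k=1/k$, which the paper leaves implicit after fixing a single $\eta>0$.
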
  
\begin{proof} From Theorem \ref{asympwtdfek} it suffices to verify for ${\bf P}$-a.e. array $X=\{x_j^{(n)}\}$
\begin{equation}\label{borcan}\liminf_{n\to \infty} \bigl(|VDM_n^Q(x_1^{(n)},...,x_{d_n}^{(n)})|\bigr)^{\frac{1}{l_n}}  =  \delta^Q(K).\end{equation}
Given $\eta >0$, the condition that for a given array $X=\{x_j^{(n)}\}$ we have
$$\liminf_{n\to \infty} \bigl(|VDM_n^Q(x_1^{(n)},...,x_{d_n}^{(n)})|\bigr)^{\frac{1}{l_n}} \leq  \delta^Q(K)-\eta$$
means that $(x_1^{(n)},...,x_{d_n}^{(n)})\in K^{d_n}\setminus A_{n,\eta}$ for infinitely many $n$. Setting 
$$E_n:=\{X \in \chi: (x_1^{(n)},...,x_{d_n}^{(n)})\in K^{d_n}\setminus A_{n,\eta}\},$$
we have
$${\bf P}(E_n)\leq Prob_n(K^{d_n}\setminus A_{n,\eta})\leq (1-\frac{\eta}{2\delta^Q(K)})^{2l_n}$$
and $\sum_{n=1}^{\infty} {\bf P}(E_n)<+\infty$. By the Borel-Cantelli lemma, 
$${\bf P}(\limsup_{n\to \infty} E_n)={\bf P}(\bigcap_{n=1}^{\infty} \bigcup_{k\geq n}^{\infty} E_k)=0.$$ Thus, with probability one, only finitely many $E_n$ occur,  and (\ref{borcan}) follows.
\end{proof}

The main goal in the rest of the paper is to verify a stronger probabilistic result -- a large deviation principle -- and to explain this result in $P-$pluripotential-theoretic terms.

 \section{Relation between $E^*$ and $J,J^Q$ functionals.}
	\label{sec:eandp} 
	We define some functionals on $\mathcal M_P(K)$ using $L^2-$type notions which act as a replacement for an energy functional on measures. Then we show these functionals $\overline J(\mu)$ and $\underline J(\mu)$ defined using a ``$\limsup$'' and a ``$\liminf$'' coincide (see Definitions \ref{jwmu} and \ref{jwmuq}); this is the essence of our first proof of the large deviation principle, Theorem \ref{ldp}. Using Proposition \ref{gap}, we relate this functional with $E^*$ from (\ref{estar2}).

	Fix a nonpluripolar compact set $K$ and a strong Bernstein-Markov measure $\nu$ on $K$. For simplicity, we normalize so that $\nu$ is a probability measure. Recall then for any $Q\in C(K)$, $\nu$ is a Bernstein-Markov measure for the triple $(P,K,Q)$. Given $G\subset {\mathcal M}_P(K)$ open, for each $s=1,2,...$ we set
\begin{equation}\label{nbhddef}\tilde G_s:= \{{\bf a} =(a_1,...,a_s)\in K^s: \frac{\gamma_d}{s}\sum_{j=1}^s \delta_{a_j}\in G\}.\end{equation}
Define, for $n=1,2,...$, 
$$J_n(G):=[\int_{\tilde G_{d_n}}|VDM_n({\bf a})|^{2}d\nu ({\bf a})]^{1/2l_n}.$$

\begin{definition} \label{jwmu} For $\mu \in \mathcal M_P(K)$ we define
$$\overline J(\mu):=\inf_{G \ni \mu} \overline J(G) \ \hbox{where} \ \overline J(G):=\limsup_{n\to \infty} J_n(G);$$
$$\underline J(\mu):=\inf_{G \ni \mu} \underline J(G) \ \hbox{where} \ \underline J(G):=\liminf_{n\to \infty} J_n(G).$$
\end{definition}

The infima are taken over all neighborhoods $G$ of the measure $\mu$ in ${\mathcal M}_P(K)$. 
A priori, $\overline J,\underline J$ depend on $\nu$. These functionals are nonnegative but can take the value zero. Intuitively, we are taking a ``limit'' of $L^2(\nu)$ averages of discrete, equally weighted approximants $\frac{\gamma_d}{s}\sum_{j=1}^s \delta_{a_j}$ of $\mu$. An ``$L^{\infty}$'' version of $\overline J,\underline J$ was introduced in \cite{PE} where $J_n(G)$ is replaced by 
\begin{equation}\label{wfcl} W_n(G):=\sup_{{\bf a}\in \tilde G_{d_n}}|VDM_n({\bf a})|^{1/l_n}\geq J_n(G).\end{equation}

The weighted versions of these functionals are defined for $Q\in \mathcal A(K)$ using
\begin{equation}\label{jkqmu}J^Q_n(G):=[\int_{\tilde G_{d_n}}|VDM^Q_n({\bf a})|^{2}d\nu ({\bf a})]^{1/2l_n}.\end{equation}

\begin{definition} \label{jwmuq} For $\mu \in \mathcal M_P(K)$ we define
$$\overline J^Q(\mu):=\inf_{G \ni \mu} \overline J^Q(G) \ \hbox{where} \ \overline J^Q(G):=\limsup_{n\to \infty} J^Q_n(G);$$
$$\underline J^Q(\mu):=\inf_{G \ni \mu} \underline J^Q(G) \ \hbox{where} \ \underline J^Q(G):=\liminf_{n\to \infty} J^Q_n(G).$$
\end{definition}

The uppersemicontinuity of $\overline J, \overline J^Q, \underline J$ and $\underline J^Q$ on ${\mathcal M}_P(K)$ (with the weak-* topology) follows as in Lemma 3.1 of \cite{PE}.  Set 
$$b_d=b_d(P):=\frac{d+1}{Ad\gamma_d}.$$
\begin{proposition}\label{prop: relation J JQ}  Fix $Q\in C(K)$. Then 
\begin{enumerate}
\item $\overline J^Q(\mu)\leq   \delta^Q (K)$;
\item $\overline J(\mu)=\overline J^Q(\mu)\cdot (e^{\int_K Qd\mu})^{b_d} $; 
\item $\log \overline J(\mu)\leq  \inf_{v\in  C(K)} [\log  \delta^{v} (K) +b_d\int_K vd\mu]$; 
\item $\log \overline J^Q(\mu)\leq  \inf_{v\in  C(K)} [\log  \delta^{v} (K) +b_d\int_K vd\mu]-b_d\int_K Qd\mu$.
\end{enumerate}
 Properties (1)-(4) also hold for the functionals $\underline J,\underline J^Q$. 	
\end{proposition}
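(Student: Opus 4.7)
The plan is to establish (2) as the key identity, from which (1), (3), and (4) follow by soft manipulations. For (1), I would use the trivial pointwise bound $|VDM_n^Q({\bf a})| \leq W_n(K)$ on $K^{d_n}$ together with the fact that $\nu$ is a probability measure to get
$$J_n^Q(G) \leq W_n(K)^{1/l_n} \cdot \bigl(\nu^{d_n}(\tilde G_{d_n})\bigr)^{1/(2l_n)} \leq W_n(K)^{1/l_n}.$$
Passing to the limit yields $\overline J^Q(G) \leq \delta^Q(K)$ for every open $G$, so $\overline J^Q(\mu) \leq \delta^Q(K)$; the same reasoning handles $\underline J^Q$.

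The core step (2) would exploit the factorization
$$|VDM_n^Q({\bf a})|^2 = |VDM_n({\bf a})|^2 \cdot \exp\Bigl(-2n\sum_{j=1}^{d_n} Q(a_j)\Bigr)$$
together with weak-$*$ continuity of $\sigma \mapsto \int Q d\sigma$. Given $\eta > 0$, since $Q \in C(K)$ is uniformly approximable by polynomials in $\Re z, \Im z$, the neighborhood basis (\ref{nbhdbase}) provides some $G_0 \ni \mu$ with $|\int_K Q d\sigma - \int_K Q d\mu| < \eta$ for every $\sigma \in G_0$. For any neighborhood $G \subset G_0$ and any ${\bf a} \in \tilde G_{d_n}$, the empirical measure $\frac{\gamma_d}{d_n}\sum_j \delta_{a_j}$ lies in $G_0$, so
$$\Bigl|\sum_{j=1}^{d_n} Q(a_j) - \frac{d_n}{\gamma_d}\int_K Q d\mu \Bigr| \leq \frac{d_n \eta}{\gamma_d}.$$
Pulling the almost-constant exponential out of the integral defining $J_n^Q(G)$, taking $(2l_n)$th roots, and invoking (\ref{key}) to pass to the limit $nd_n/(l_n \gamma_d) \to b_d$, I would obtain
$$\bigl|\log \overline J^Q(G) - \log \overline J(G) + b_d {\textstyle \int_K Q d\mu}\bigr| \leq b_d \eta$$
for every $G \subset G_0$. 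Taking the infimum over such $G$ and then sending $\eta \to 0$ yields (2). The argument for $\underline J, \underline J^Q$ is identical, since only the monotonicity of $\log$ and of $\limsup/\liminf$ under the one-sided inequalities is used.

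Finally, (3) follows at once: for every $v \in C(K)$, (2) and (1) give $\log \overline J(\mu) = \log \overline J^v(\mu) + b_d \int_K v d\mu \leq \log \delta^v(K) + b_d \int_K v d\mu$, and I take the infimum over $v$. Then (4) is obtained by subtracting $b_d \int_K Q d\mu$ from (3) and invoking (2) again. The main obstacle I anticipate is the bookkeeping in step (2): one must simultaneously manage the uniform weak-$*$ approximation of $\int_K Q d\mu$ by $\frac{1}{d_n}\sum_j Q(a_j)$, track the convex-body asymptotic $nd_n/l_n \to (d+1)/(dA)$ coming from the spaces $Poly(nP)$, and verify that the infimum over shrinking neighborhoods commutes with $\limsup/\liminf$ in $n$ up to the controlled error $O(\eta)$.
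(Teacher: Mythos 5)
Your proposal is correct and follows essentially the same approach as the paper: (1) via the pointwise bound $|VDM_n^Q|\leq W_n(K)$, the key step being the sandwich estimate $|VDM_n({\bf a})| \asymp |VDM_n^Q({\bf a})|\,e^{\frac{nd_n}{\gamma_d}\int_K Q\,d\mu}$ for ${\bf a}$ in a small enough neighborhood $\tilde G_{d_n}$, combined with the asymptotic $nd_n/l_n\to b_d\gamma_d$ from \eqref{key}. The paper compresses (2)--(4) by citing Corollary 3.4 and Propositions 3.5--3.6 of \cite{PE} and recording only the replacement estimate \eqref{POTA}, whereas you spell out the deduction of (3) and (4) from (1) and (2), but the underlying argument is identical.
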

\begin{proof}
Property (1) follows from
$$J^Q_n(G)\leq \sup_{{\bf a}\in \tilde G_{d_n}} |VDM^Q_n({\bf a})|^{1/l_n}\leq \sup_{{\bf a}\in K^{d_n}} |VDM^Q_n({\bf a})|^{1/l_n}.$$ 
The proofs of Corollary 3.4, Proposition 3.5 and Proposition 3.6 of \cite{PE} work mutatis mutandis to verify (2), (3) and (4). The relevant estimation, replacing the corresponding one which is two lines above equation (3.2) in \cite{PE}, is, given $\epsilon >0$, for ${\bf a}\in \tilde G_{d_n}$,
\begin{eqnarray}\label{POTA}
|VDM^Q_n({\bf a})|e^{\frac{nd_n}{\gamma_d}(-\epsilon-\int_K Qd\mu)} &\leq & |VDM_n({\bf a})|\\
& \leq & |VDM^Q_n({\bf a})|e^{\frac{nd_n}{\gamma_d}(\epsilon+\int_K Qd\mu)}.\nonumber
\end{eqnarray}
To see this, we first recall that
$$|VDM_n({\bf a})|=|VDM^Q_n({\bf a})|e^{n\sum_{j=1}^{d_n}Q(a_j)}.$$
For $\mu \in {\mathcal M}_P(K)$, $Q\in C(K)$, $\epsilon >0$, there exists a neighborhood $G$ of $\mu$ in ${\mathcal M}_P(K)$ with 
$$-\epsilon < \int_K Qd\mu - \frac{\gamma_d}{d_n}\sum_{j=1}^{d_n}Q(a_j)<\epsilon$$
for ${\bf a}\in \tilde G_{d_n}$. Plugging this double inequality into the previous equality we get (\ref{POTA}).
Moreover, from (\ref{key}),
\begin{equation} \label{key2}\lim_{n\to \infty} \frac{nd_n}{l_n} = \frac{d+1}{Ad}=b_d\gamma_d\end{equation}
so that $\frac{nd_n}{\gamma_d}\asymp l_nb_d$ as $n\to \infty$. Taking $l_n-$the roots in (\ref{POTA}) accounts for the factor of $b_d$ in (2), (3) and (4).
\end{proof}

\begin{remark} \label{for45} The corresponding $\underline W,\underline W^Q, \overline W,\overline W^Q$ functionals, defined using (\ref{wfcl}), clearly dominate their ``$J$'' counterparts; e.g., $\overline W^Q\geq \overline J^Q$.

\end{remark}

Note that formula (\ref{enrum}) can be rewritten:
\begin{equation}\label{enrum2} \log \delta^Q(K)= -b_dE(V_{P,K,Q}^*).\end{equation}
Thus the upper bound in Proposition \ref{prop: relation J JQ} $(3)$ becomes
\begin{equation}\label{jmue}
\log \overline J(\mu)\leq -b_d\sup_{v\in C(K)}[E(V^*_{P,K,v})-\int_Kvd\mu]=-b_d E^*(\mu).
\end{equation}

For the rest of section \ref{sec:eandp} and section \ref{sec:ld}, we will always assume $Q\in C(K)$. Theorem \ref{obsolete} shows that the inequalities in (3) and (4) are equalities, and that the $\overline J,\overline J^Q$ functionals coincide with their $\underline J,\underline J^Q$ counterparts. The key step in the proof of Theorem \ref{obsolete} is to verify this for $\overline J^v(\mu_{K,v})$ and $\underline J^v(\mu_{K,v})$.

 \begin{theorem} \label{obsolete} Let $K\subset \CC^d$ be a nonpluripolar compact set and let $\nu$ satisfy a strong Bernstein-Markov property. Fix $Q\in  C(K)$. Then for any $\mu\in \mathcal M_P(K)$, 
 \begin{equation}\label{minunwtd}\log \overline J(\mu)= \log \underline  J(\mu)=\inf_{v\in  C(K)} [\log  \delta^{v} (K) +b_d\int_K v d\mu]\end{equation}
and
 \begin{equation}\label{minwtd}\log \overline J^Q(\mu)= \log \underline J^Q(\mu)=\inf_{v\in  C(K)} [\log  \delta^{v} (K) +b_d\int_K v d\mu]-b_d\int_K Qd\mu.\end{equation}
 \end{theorem}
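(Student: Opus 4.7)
The plan is as follows. The upper bounds in (\ref{minunwtd}) and (\ref{minwtd}) are already contained in Proposition \ref{prop: relation J JQ} parts (3) and (4), and since $\overline J \geq \underline J$ while part (2) of the same proposition reduces the weighted version to the unweighted one, it suffices to prove
$$
\log \underline J(\mu) \geq F(\mu) := \inf_{v \in C(K)}\bigl[\log \delta^v(K) + b_d \textstyle\int_K v \, d\mu \bigr]
$$
for every $\mu \in \mathcal{M}_P(K)$. By Proposition \ref{gap} together with (\ref{enrum2}), the infimum equals $-b_d E^*(\mu)$, so if $E^*(\mu) = +\infty$ the claim is vacuous. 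I will therefore assume $E^*(\mu) < +\infty$. Theorem \ref{thm: existence in E1 local} then provides $u \in \mathcal{E}_P^1(\CC^d)$ with $(dd^c u)^d = \mu$ maximizing $\mathcal{F}_\mu$, and since $\mu$ is compactly supported in $K$, Remark \ref{rem: compact support} yields $E^*(\mu) = E(u) - \int_K u\,d\mu$.

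I will first establish the crucial base case $\underline J^v(\mu_{K,v}) = \delta^v(K)$ for $v \in C(K)$. Given any neighborhood $G$ of $\mu_{K,v}$ in $\mathcal{M}_P(K)$, I will argue via compactness of $\mathcal{M}_P(K)$ and the equilibrium uniqueness behind Theorem \ref{asympwtdfek} that, for $\eta > 0$ sufficiently small and $n$ sufficiently large, any $\mathbf{a} \in A_{n,\eta}$ (see (\ref{aketa})) has empirical measure $\frac{\gamma_d}{d_n}\sum_{j=1}^{d_n}\delta_{a_j} \in G$; equivalently $A_{n,\eta} \subset \tilde G_{d_n}$. Corollary \ref{largedev} then forces $Prob_n(\tilde G_{d_n}) \to 1$, so combined with Proposition \ref{weightedtd},
$$
J_n^v(G) = Z_n^{1/(2l_n)} \cdot Prob_n(\tilde G_{d_n})^{1/(2l_n)} \longrightarrow \delta^v(K).
$$
Since $G$ was arbitrary and Proposition \ref{prop: relation J JQ}(1) gives the reverse inequality, $\underline J^v(\mu_{K,v}) = \overline J^v(\mu_{K,v}) = \delta^v(K)$, and Proposition \ref{prop: relation J JQ}(2) then yields $\log \underline J(\mu_{K,v}) = \log \delta^v(K) + b_d \int_K v\,d\mu_{K,v}$.

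To handle a general $\mu$ with $E^*(\mu) < +\infty$, I will apply Proposition \ref{goodapprox} to the $u$ above: it produces continuous $Q_j \downarrow u$ on $K$ with $u_j := V^*_{P,K,Q_j} \downarrow u$, $\mu_j := (dd^c u_j)^d = \mu_{K,Q_j} \to \mu$ weak-$*$ in $\mathcal{M}_P(K)$, and $\int_K Q_j\,d\mu_j \to \int_K u\,d\mu$. The base case applied to each $Q_j$, combined with (\ref{enrum2}), gives
$$
\log \underline J(\mu_j) = \log \delta^{Q_j}(K) + b_d \textstyle\int_K Q_j\,d\mu_j = -b_d E(u_j) + b_d \int_K Q_j\,d\mu_j.
$$
Continuity of the Monge-Amp\`ere energy along decreasing sequences in $\mathcal{E}_P^1(\CC^d)$ (transferred from the compact K\"ahler setting through the correspondence (\ref{corresp})) yields $E(u_j) \to E(u)$, whence $\limsup_j \log \underline J(\mu_j) = b_d(\int_K u\,d\mu - E(u)) = -b_d E^*(\mu) = F(\mu)$. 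Upper semicontinuity of $\underline J$ on $\mathcal{M}_P(K)$ applied to $\mu_j \to \mu$ then gives $\log \underline J(\mu) \geq \limsup_j \log \underline J(\mu_j) = F(\mu)$, and (\ref{minwtd}) follows from Proposition \ref{prop: relation J JQ}(2). The hardest step will be the base case: upgrading the sup-type concentration of Corollary \ref{largedev} into an $L^2$-average lower bound on $J_n^v(G)$, which relies crucially on the Bernstein-Markov asymptotic $Z_n^{1/(2l_n)} \to \delta^v(K)$ from Proposition \ref{weightedtd}.
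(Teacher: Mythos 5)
Your proposal is correct and follows essentially the same architecture as the paper's proof: reduce to the unweighted lower bound via Proposition~\ref{prop: relation J JQ}(2)--(4), dispose of the case $E^*(\mu)=+\infty$, establish the base case $\underline J^v(\mu_{K,v})=\delta^v(K)$ by combining the set inclusion $A_{n,\eta}\subset\tilde G_{d_n}$ (proved by contradiction from Theorem~\ref{asympwtdfek}) with Corollary~\ref{largedev} and Proposition~\ref{weightedtd}, and then pass to general $\mu$ with $E^*(\mu)<\infty$ via Proposition~\ref{goodapprox} and upper semicontinuity of $\underline J$. The two small execution differences -- using a fixed $\eta>0$ rather than the paper's carefully chosen $\eta_n\downarrow 0$ in~\eqref{etan}, and identifying $M:=\lim_j\log\underline J(\mu_j)=-b_dE^*(\mu)$ directly through energy continuity along decreasing sequences rather than the paper's $\epsilon$-sandwich with the infimum -- are both valid and do not change the structure of the argument.
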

 \begin{proof} It suffices to prove (\ref{minunwtd}) since (\ref{minwtd}) follows from $(2)$ of Proposition \ref{prop: relation J JQ}. We have the upper bound 
 $$\log \overline J(\mu)\leq  \inf_{v\in  C(K)} [\log  \delta^{v} (K) +b_d\int_K vd\mu]$$
 from (3); for the lower bound, we consider different cases.

 \smallskip \noindent {\sl Case I: $\mu=\mu_{K,v}$ for some $v\in  C(K)$.}
\smallskip

 We verify that
\begin{equation}\label{jversion}\log \overline J(\mu_{K,v})=\log \underline J(\mu_{K,v})=\log \delta^{v} (K) +b_d\int_K vd\mu_{K,v}\end{equation}
which proves (\ref{minunwtd}) in this case. 
 \medskip
 
  {To prove (\ref{jversion}), we use the definition of $ \underline J(\mu_{K,v})$ and Corollary \ref{largedev}. Fix a neighborhood $G$ of $\mu_{K,v}$. For $\eta >0$, define $A_{n,\eta}$ as in (\ref{aketa}) with $Q=v$. Set 
 \begin{equation}
 	\label{etan}
 	\eta_n := \max \left( \delta^v(K)- \frac{nZ_n^{1/2l_n} }{n+1}, \frac{Z_n^{1/2l_n}}{n+1}\right).
 \end{equation}
 By  Proposition \ref{weightedtd}, $\eta_n\to 0$. 
 We claim that we have the inclusion
 \begin{equation}\label{setinclu} 
  A_{n,\eta_n} \subset \tilde G_{d_{n}} \  \hbox{for all} \ n\  \hbox{large enough}. 
 \end{equation}
We prove \eqref{setinclu} by contradiction: if false, there is a sequence $\{n_j\}$ with $n_j\uparrow \infty$  and  $x^j=(x_1^j,...,x_{d_{n_j}}^j)\in 
 A_{n_j,\eta_{n_j}} \setminus \tilde G_{d_{n_j}}$. However $\mu_j:=\frac{\gamma_d}{d_{n_j}}\sum_{i=1}^{d_{n_j}}\delta_{x_i^j}\not\in  G$ for $j$ sufficiently large contradicts Theorem \ref{asympwtdfek} since $x^j\in A_{n_j,\eta_j} $ and $\eta_j \downarrow 0$ imply $\mu_j\to \mu_{K,v}$ weak-*.

Next,  a direct computation using \eqref{etan} shows that, for all $n$ large enough,
 \begin{equation}\label{probk2} Prob_{n}(K^{d_{n}}\setminus A_{n,\eta_n})\leq \frac{(\delta^v(K)-\eta_n)^{2l_n}}{Z_n} \leq (\frac{n}{n+1})^{2l_n}
 \leq \frac{n}{n+1}
 \end{equation}
(recall $\nu$ is a probability measure). Hence 
\begin{flalign*}
	& \frac{1}{Z_{n}} \int_{\tilde G_{d_{n}}}  |VDM_{n}^{v}(z_1,...,z_{d_{n}
})|^2 \cdot d\nu(z_1) \cdots d\nu(z_{d_{n}
})\\
& \geq   \frac{1}{Z_{n}} \int_{A_{n,\eta_n}}  |VDM_{n}^{v}(z_1,...,z_{d_{n}
})|^2 \cdot d\nu(z_1) \cdots d\nu(z_{d_{n}
})\\
&\geq  \frac{1}{n+1}. 
\end{flalign*}
Since $P\subset r \Sigma$ and $\Sigma \subset kP$ for some $k\in \ZZ^+$,  $l_n = 0 (n^{d+1})$ and we have $\frac{1}{2l_n} \log (n+1) \to 0$.} 
Since $\nu$ satisfies a strong Bernstein-Markov property and $v\in C(K)$, using Proposition \ref{weightedtd}  and the above estimate we conclude that
$$\liminf_{n\to \infty} \frac{1}{2l_{n}}\log \int_{\tilde G_{d_{n}}}  |VDM_{n}^{v}(z_1,...,z_{d_{n}
})|^2 d\nu(z_1) \cdots d\nu(z_{d_{n}
})$$
$$\geq \log  \delta^{{v}}(K).$$
Taking the infimum over all neighborhoods $G$ of $\mu_{K,v}$ we obtain
$$\log \underline J^{v}(\mu_{K,v})\geq \log  \delta^{{v}}(K).$$
{From (1) Proposition \ref{prop: relation J JQ}, $\log \overline J^{v}(\mu_{K,v})\leq \log  \delta^{{v}}(K)$; thus we have} 
\begin{equation}\label{jeqn}\log \underline J^{v}(\mu_{K,v})=\log \overline J^{v}(\mu_{K,v})= \log  \delta^{{v}}(K).\end{equation}
Using (2) of Proposition \ref{prop: relation J JQ} with $\mu = \mu_{K,v}$ we obtain (\ref{jversion}).

\smallskip \noindent {\sl Case II: $\mu\in \mathcal M_P(K)$ with the property that $E^*(\mu)<\infty$.}
\smallskip

{ From Theorem \ref{thm: existence in E1 local} and Proposition \ref{pro: E1 characterization local} there exists $u\in L_P(\CC^d)$ -- indeed, $u\in \Ec^1_P(\CC^d)$ -- with $\mu =(dd^cu)^d$ and $\int_K ud\mu>-\infty$}.  However, since $u$ is only usc on $K$, $ \mu$ is not necessarily of the form $\mu_{K,v}$ for some $v\in  C(K)$. Taking a sequence of continuous functions $\{Q_j\}\subset  C(K)$ with $Q_j \downarrow u$ on $K$,  by Proposition \ref{goodapprox} the weighted extremal functions $V^*_{P,K,Q_j}$ decrease to $u$ on $\CC^d$;  
$$\mu_j:=(dd^cV^*_{P,K,Q_j})^d\to \mu=(dd^cu)^d \ \hbox{weak-}*;$$
and 
\begin{equation} \label{needit}
\lim_{j\to \infty} \int_K Q_j d\mu_j =\lim_{j\to \infty} \int_K Q_j d\mu=\int_K u d\mu.
\end{equation}
From the previous case we have 
$$\log \overline J(\mu_j)= \log  \underline J(\mu_j)=\log  \delta^{Q_j} (K) +b_d\int_K Q_jd\mu_j.$$
Using uppersemicontinuity of the functional $\mu \to \underline J(\mu)$,
$$\limsup_{j\to \infty} \overline J(\mu_j)=\limsup_{j\to \infty} \underline J(\mu_j)\leq \underline  J(\mu).$$
Since $Q_j \downarrow u$ on $K$, 
\begin{equation}\label{upbound}
\limsup_{j\to \infty} \log  \delta^{Q_j}(K)=\lim_{j\to \infty}  \log  \delta^{Q_j}(K).
\end{equation}
Therefore
$$M:=\lim_{j\to \infty} \log \underline J(\mu_j)=\lim_{j\to \infty}\bigl( \log  \delta^{Q_j} (K) +b_d\int_K Q_jd\mu_j \bigr)$$
exists and is less than or equal to $\log \underline J(\mu)$. 
We want to show that 
\begin{equation}\label{finbound}\inf_{v} [\log  \delta^{v} (K) +b_d\int_K vd\mu]\leq M.\end{equation}

Given $\epsilon >0$, by \eqref{needit} for $j\geq j_0(\epsilon)$,
$$\int_K Q_j d\mu_j \geq \int_K Q_j d\mu -\epsilon \ \hbox{and} \ \log \underline J(\mu_j)< M+\epsilon.$$
Hence for such $j$,
$$\inf_{v} [\log  \delta^{v} (K) +b_d\int_K vd\mu]\leq \log  \delta^{Q_j} (K) +b_d\int_K Q_jd\mu$$
$$\leq \log  \delta^{Q_j} (K) +b_d\int_K Q_jd\mu_j+b_d\epsilon=\log \underline J(\mu_j)+b_d\epsilon < M+(b_d+1)\epsilon,$$
yielding (\ref{finbound}). This finishes the proof in Case II.

\smallskip \noindent {\sl Case III: $\mu\in \mathcal M(K)$ with the property that $E^*(\mu)=+\infty$.}
\smallskip

It follows from Proposition \ref{gap}  and Theorem \ref{energyrumely} that the right-hand side of  \eqref{minunwtd} is $-\infty$, finishing the proof.

 \end{proof}

\begin{remark} \label{fourfour} From now on, we simply use the notation $J,J^Q$ without the overline or underline. 
Using Proposition \ref{gap} and Theorem \ref{energyrumely}, we have
$$\log  J(\mu)= \inf_{Q\in  C(K)} [\log  \delta^{Q} (K) +b_d\int_K Qd\mu]$$
$$=-\sup_{Q\in  C(K)} [-\log  \delta^{Q} (K) -b_d\int_K Qd\mu]$$
$$=-\sup_{Q\in  C(K)} [b_dE(V_{P,K,Q}^*) -b_d\int_K Qd\mu]=-b_d\sup_{Q\in  C(K)} [E(V_{P,K,Q}^*) -\int_K Qd\mu]$$
(recall (\ref{enrum2})) which one can compare with
$$E^*(\mu)=\sup_{Q\in C(K)}[E(V^*_{P,K,Q})-\int_K Q d\mu]$$
from Proposition \ref{gap} to conclude
\begin{equation}\label{eandj}\log  J(\mu)=-b_d E^*(\mu).\end{equation}
In particular, $J, \ J^Q$ are independent of the choice of strong Bernstein-Markov measure for $K$.
\end{remark}

Following the idea in Proposition 4.3 of \cite{PELD}, we observe the following:

\begin{proposition} \label{prop43} Let $K\subset \CC^d$ be a nonpluripolar compact set and let $\nu$ satisfy a strong Bernstein-Markov property. Fix $Q\in  C(K)$. The measure $\mu_{K,Q}$ is the unique maximizer of the functional $\mu \to J^Q(\mu)$ over $\mu \in \mathcal M_P(K)$; i.e., 
\begin{equation}\label{check}  J^Q(\mu_{K,Q})=\delta^Q(K)  \ (\hbox{and} \ J(\mu_K)=\delta(K)).
\end{equation}
 \end{proposition}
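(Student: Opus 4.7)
The plan is to recognize (\ref{check}) as the equality case in the Legendre duality between the functionals $E^*$ on $\mathcal{M}_P(K)$ and $Q\mapsto E(V^*_{P,K,Q})$ on $C(K)$, and to extract uniqueness from a first-order optimality condition.

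First I would observe that the value equality $J^Q(\mu_{K,Q})=\delta^Q(K)$ is already embedded in the proof of Theorem \ref{obsolete}: specializing $v=Q$ in equation (\ref{jeqn}) gives exactly this, and the unweighted statement $J(\mu_K)=\delta(K)$ is the same identity with $Q\equiv 0$. Equivalently, the upper bound is Proposition \ref{prop: relation J JQ}(1) and the matching lower bound is (\ref{jeqn}). So only uniqueness requires additional work.

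For uniqueness, combining (\ref{eandj}), Proposition \ref{prop: relation J JQ}(2), and Theorem \ref{energyrumely} gives
$$\log J^Q(\mu)=-b_d\Bigl[E^*(\mu)+\int_K Q\,d\mu\Bigr],\qquad \log\delta^Q(K)=-b_d E(V^*_{P,K,Q}),$$
so $J^Q(\mu)=\delta^Q(K)$ is equivalent, via Proposition \ref{gap}, to the statement that $v=Q$ attains the supremum
$$E(V^*_{P,K,Q})-\int_K Q\,d\mu=\sup_{v\in C(K)}\Bigl[E(V^*_{P,K,v})-\int_K v\,d\mu\Bigr].$$
Given $\phi\in C(K)$, set $Q_t:=Q+t\phi$. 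Using (\ref{diff02}) together with the fact that $V^*_{P,K,Q_t}=Q_t$ quasi-everywhere on $\mathrm{supp}(\mu_{K,Q_t})$ (cf.\ Proposition 2.5 of \cite{BBL}), standard envelope-differentiation arguments (following the proof of \cite[Proposition 4.3]{PELD}) show that $t\mapsto E(V^*_{P,K,Q_t})$ is differentiable at $t=0$ with derivative $\int_K\phi\,d\mu_{K,Q}$. The optimality condition at $t=0$ then forces $\int_K\phi\,d(\mu-\mu_{K,Q})=0$ for every $\phi\in C(K)$, hence $\mu=\mu_{K,Q}$.

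The main technical obstacle is the differentiation of $t\mapsto E(V^*_{P,K,Q_t})$: the envelopes $V^*_{P,K,Q_t}$ need not be uniformly bounded or depend regularly on $t$, so the derivative computation requires the continuous approximation of Proposition \ref{goodapprox} to reduce to a tractable situation, combined with the concavity of $E$ along affine paths in $L_{P,+}(\CC^d)$ (the discussion following (\ref{evu})) to upgrade the one-sided envelope derivative to a genuine two-sided derivative.
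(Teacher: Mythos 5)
Your proof is correct, but it takes a genuinely different route from the paper's. Both proofs establish the value $J^Q(\mu_{K,Q})=\delta^Q(K)$ the same way (via (\ref{jeqn}) and Proposition \ref{prop: relation J JQ}); the difference is in the uniqueness argument. The paper proves uniqueness combinatorially: if $\overline J^Q(\mu)=\delta^Q(K)$ then, by comparing $\overline J^Q$ with the $\overline W^Q$ functional (Remark \ref{for45}), every neighborhood $G$ of $\mu$ must contain the normalized counting measure of an asymptotic weighted $P$-Fekete array, and Theorem \ref{asympwtdfek} forces these counting measures to converge to $\mu_{K,Q}$, whence $\mu_{K,Q}\in\bar G$ for all $G\ni\mu$ and so $\mu=\mu_{K,Q}$. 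Your argument instead passes through Legendre duality: using (\ref{eandj}), Proposition \ref{prop: relation J JQ}(2) and (\ref{enrum2}), the equality $J^Q(\mu)=\delta^Q(K)$ is equivalent to $v=Q$ attaining the supremum defining $E^*(\mu)$ in Proposition \ref{gap}; the first-order condition obtained by differentiating $t\mapsto E(V^*_{P,K,Q+t\phi})-\int_K(Q+t\phi)\,d\mu$ at $t=0$ then gives $\int_K\phi\,d(\mu_{K,Q}-\mu)=0$ for all $\phi\in C(K)$, hence $\mu=\mu_{K,Q}$. The differentiability input you flag as the ``main technical obstacle'' is in fact already available in the paper as Proposition \ref{diffpropcor} (stated and used for the second proof of the LDP), so your argument closes without needing to redevelop that machinery. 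The trade-off: the paper's Fekete-array argument is elementary and self-contained given Theorem \ref{asympwtdfek}, whereas your variational argument requires the heavier differentiability result but exposes more clearly the Legendre-duality structure behind the rate functional, which is the same mechanism underlying the G\"artner--Ellis proof in Section 5.
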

\begin{proof} The fact that $\mu_{K,Q}$ maximizes $J^Q$ (and $\mu_K$ maximizes $J$) follows from \eqref{jversion}, \eqref{jeqn} and Proposition \ref{prop: relation J JQ}.

Assume now that $\mu\in \mathcal{M}_P(K)$ maximizes $J^Q$.    From Remark \ref{for45} and the definitions of the functionals, for any neighborhood $G\subset \mathcal M_P(K)$ of $\mu$,  
$$\overline J^Q(\mu)\leq \overline W^Q(\mu)\leq \sup\{\limsup_{n\to \infty} |VDM_{n}^{Q}({\bf a}^{(n)})|^{1/l_n}\}\leq \delta^Q(K)$$ 
where the supremum is taken over all arrays $\{{\bf a}^{(n)}\}_{n=1,2,...}$ of $d_n-$tuples ${\bf a}^{(n)}$ in $K$ whose normalized counting measures $\mu_n:=\frac{1}{d_n}\sum_{j=1}^{d_n} \delta_{a^{(n)}_j}$ lies in $G$.  Since $\overline J^Q(\mu) = \delta^Q(K)$  there is an asymptotic weighted  
Fekete array $\{{\bf a}^{(n)}\}$ as in (\ref{wam}). Theorem \ref{asympwtdfek} yields that $\mu_n:=\frac{1}{d_n}\sum_{j=1}^{d_n} \delta_{a^{(n)}_j}$ converges weak-* to $\mu_{K,Q}$, hence $\mu_{K,Q}\in \bar{G}$.   Since this is true for each neighborhood $G\subset \mathcal M_P(K)$ of $\mu$, we must have $\mu=\mu_{K,Q}$. 
\end{proof}

\section{Large deviation.}\label{sec:ld} As in the previous section, we fix $K\subset \CC^d$ a nonpluripolar compact set; $Q\in  C(K)$; and a measure $\nu$ on $K$ satisfying a strong Bernstein-Markov property. For $x_1,...,x_{d_n}\in K$, we get a discrete measure $\frac{\gamma_d}{d_n}\sum_{j=1}^{d_n} \delta_{x_j}\in \mathcal M_P(K)$. Define 
$j_n:  K^{d_n} \to \mathcal M_P(K)$ via 
$$j_n(x_1,...,x_{d_n}):=\frac{\gamma_d}{d_n}\sum_{j=1}^{d_n} \delta_{x_j}.$$
From (\ref{probk}), 
$\sigma_n:=(j_n)_*(Prob_n) $ is a probability measure on $\mathcal M_P(K)$: for a Borel set $B\subset \mathcal M_P(K)$,
\begin{equation}\label{sigmak}\sigma_n(B)=\frac{1}{Z_n} \int_{\tilde B_{d_n}} |VDM_n^Q(x_1,...,x_{d_n
})|^2 d\nu(x_1) \cdots d\nu(x_{d_n
})\end{equation}
where $\tilde B_{d_n}:= \{{\bf a} =(a_1,...,a_{d_n})\in K^{d_n}: \frac{\gamma_d}{d_n}\sum_{j=1}^{d_n} \delta_{a_j}\in B\}$(recall (\ref{nbhddef})). Here, $Z_n:=Z_n(P,K,Q,\nu)$. Note that 
\begin{equation}\label{jandsigma} \sigma_n(B)^{1/2l_n}=\frac{1}{Z_n^{1/2l_n}}\cdot J_n^Q(B).\end{equation}
For future use, suppose we have a function $F:\RR\to \RR$ and a function $v\in C(K)$. We write, for $\mu \in \mathcal M_P(K)$, 
$$<v,\mu
>:= \int_K v d\mu
$$
and then 
\begin{equation}\label{lambdaint}\int_{\mathcal M_P(K)} F(<v,\mu
>)d\sigma_n(\mu
):=\end{equation}
$$\frac{1}{Z_n} \int_K \cdots \int_K  |VDM_n^Q(x_1,...,x_{d_n
})|^2  F\left(\frac{\gamma_d}{d_n
}\sum_{j=1}^{d_n
} v(x_j)\right)d\nu(x_1) \cdots d\nu(x_{d_n
}).$$

With this notation, we offer two proofs of our LDP, Theorem \ref{ldp}. We state the result; define LDP in Definition \ref{equivform}; and then proceed with the proofs. This closely follows the exposition in section 5 of \cite{PELD}.

\begin{theorem} \label{ldp} The sequence $\{\sigma_n=(j_n)_*(Prob_n)\}$ of probability measures on $\mathcal M_P(K)$ satisfies a {\bf large deviation principle} with speed $2l_n$ and good rate function $\mathcal I:=\mathcal I_{K,Q}$ where, for $\mu \in \mathcal M_P(K)$,
$$\mathcal I(\mu):=\log J^Q(\mu_{K,Q})-\log J^Q(\mu).$$
 \end{theorem}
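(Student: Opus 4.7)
The plan is to translate the LDP into asymptotic statements about the functionals $J^Q_n$ via the identity $\sigma_n(B)^{1/2l_n}=J^Q_n(B)/Z_n^{1/2l_n}$ from \eqref{jandsigma}. Combined with Proposition \ref{weightedtd} and Proposition \ref{prop43}, which give $Z_n^{1/2l_n}\to \delta^Q(K)=J^Q(\mu_{K,Q})$, this reduces the problem to asymptotic analysis of $J^Q_n$ on open and compact subsets of $\mathcal M_P(K)$, and Theorem \ref{obsolete} supplies exactly what is needed through the coincidence $\overline J^Q=\underline J^Q=:J^Q$. Goodness of $\mathcal I$ is then immediate: $\mathcal I\geq 0$ because $\mu_{K,Q}$ uniquely maximizes $J^Q$ (Proposition \ref{prop43}); $\mu\mapsto \log J^Q(\mu)=\inf_{G\ni\mu}\log \overline J^Q(G)$ is upper semicontinuous as an infimum of functionals indexed by shrinking neighborhoods (if $\mu_k\to\mu$ and $G\ni\mu$ is open, eventually $\mu_k\in G$, so $\log J^Q(\mu_k)\leq \log \overline J^Q(G)$); and the sublevel sets are compact because $\mathcal M_P(K)$ itself is weak-$*$ compact.

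For the LDP lower bound on an open set $G\subset \mathcal M_P(K)$, each $\mu\in G$ sees $G$ as a neighborhood of $\mu$, so Theorem \ref{obsolete} gives $\underline J^Q(G)\geq \underline J^Q(\mu)=J^Q(\mu)$; consequently
\[
\liminf_n \frac{1}{2l_n}\log \sigma_n(G) = \log \underline J^Q(G) - \log J^Q(\mu_{K,Q}) \geq -\mathcal I(\mu),
\]
and taking supremum over $\mu\in G$ yields the lower bound. For the upper bound on a closed (hence compact, since $\mathcal M_P(K)$ is compact) set $F$, I would use a standard finite covering argument. Given $\epsilon>0$, upper semicontinuity of $\overline J^Q$ produces for each $\mu\in F$ an open $G_\mu\ni \mu$ with $\overline J^Q(G_\mu)\leq J^Q(\mu)+\epsilon\leq \sup_{\mu'\in F} J^Q(\mu')+\epsilon$; extract a finite subcover $G_{\mu_1},\ldots, G_{\mu_N}$. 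Since $\tilde F_{d_n}\subset \bigcup_i \tilde G_{\mu_i,d_n}$ inside $K^{d_n}$, splitting the integral defining $J^Q_n(F)^{2l_n}$ yields
\[
J^Q_n(F)^{2l_n}\leq \sum_{i=1}^N J^Q_n(G_{\mu_i})^{2l_n}\leq N\max_i J^Q_n(G_{\mu_i})^{2l_n}.
\]
Since $N^{1/2l_n}\to 1$, one gets $\limsup_n J^Q_n(F)\leq \max_i \overline J^Q(G_{\mu_i})\leq \sup_{\mu\in F} J^Q(\mu)+\epsilon$; letting $\epsilon\to 0$ yields the required upper bound on $\limsup_n (2l_n)^{-1}\log \sigma_n(F)$.

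The main obstacle is entirely encapsulated in Theorem \ref{obsolete}: once the coincidence $\overline J^Q=\underline J^Q=J^Q$ (with its upper semicontinuity) together with $J^Q(\mu_{K,Q})=\delta^Q(K)$ are in hand, the LDP reduces to the formal covering argument above. A second proof, parallel to \cite{PELD}, would instead proceed via Laplace/Varadhan asymptotics: for $v\in C(K)$ one rewrites $|VDM_n^Q|^2 \exp(2l_n<v,\mu_n>)$ as $|VDM_n^{Q-v/b_d}|^2$ up to a factor of $o(1)$ on the exponential scale (using $2l_n\gamma_d /d_n \sim 2n/b_d$ from \eqref{key2}), so that
\[
\lim_n \frac{1}{2l_n}\log \int_{\mathcal M_P(K)} e^{2l_n<v,\mu>}d\sigma_n(\mu) = \log \delta^{Q-v/b_d}(K)-\log \delta^Q(K).
\]
Proposition \ref{prop: relation J JQ}(2) together with Proposition \ref{prop43} identify this limit with $\sup_\mu [<v,\mu>-\mathcal I(\mu)]$, after which Bryc's theorem (with exponential tightness automatic from weak-$*$ compactness of $\mathcal M_P(K)$) delivers the same LDP with rate function $\mathcal I$.
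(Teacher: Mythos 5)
Your main argument is correct and follows essentially the same route as the paper's first proof: both rest on the identity $\sigma_n(B)^{1/2l_n}=J_n^Q(B)/Z_n^{1/2l_n}$ from \eqref{jandsigma}, the convergence $Z_n^{1/2l_n}\to\delta^Q(K)=J^Q(\mu_{K,Q})$, and the coincidence $\overline J^Q=\underline J^Q$ supplied by Theorem \ref{obsolete}. The only genuine difference is that the paper converts the ``inf over neighborhoods of $\limsup$ = inf over neighborhoods of $\liminf$'' data into a full LDP by citing the packaged existence criterion, Proposition \ref{dzprop1} (Theorem 4.1.11 in \cite{DZ}), whereas you re-derive it by hand: the lower bound on open $G$ is immediate from $\underline J^Q(G)\geq \underline J^Q(\mu)=J^Q(\mu)$, and the upper bound on compact $F$ follows from the finite-covering argument together with $N^{1/2l_n}\to 1$. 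That covering argument is exactly the content of the Dembo--Zeitouni result (together with the exponential tightness you correctly note is automatic because $\mathcal M_P(K)$ is weak-$*$ compact), so this is a more elementary and self-contained version of the same proof; the paper's version is simply shorter. Your verifications of goodness (upper semicontinuity of $\log J^Q$ as an infimum over shrinking neighborhoods, compactness of the ambient space, nonnegativity via Proposition \ref{prop43}) match the paper's.

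Your sketch of a second proof, however, has a real gap. Bryc's inverse Varadhan lemma requires the Laplace-type limit $\Lambda$ to exist for \emph{all} bounded continuous functions on $\mathcal M_P(K)$; you only compute it for the linear functionals $\mu\mapsto\langle v,\mu\rangle$ with $v\in C(K)$. With only linear test functionals one is in the abstract G\"artner--Ellis/Baldi setting (Corollary 4.6.14 of \cite{DZ}, stated here as Proposition \ref{gartell}), and the LDP lower bound then requires G\^ateaux differentiability of $\Lambda$. This is precisely why the paper invokes Proposition \ref{diffpropcor} (the differentiability of $t\mapsto E(V^*_{P,K,Q+tu})$) in its second proof. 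Your sketch omits this step, and it is not automatic --- it is the nontrivial analytic ingredient of the second route. The computation you do perform (the identification of $\Lambda(v)$ with $\log\delta^{Q-v/b_d}(K)-\log\delta^Q(K)$ via the scaling $l_n\gamma_d/(nd_n)\to 1/b_d$) is correct and matches \eqref{lambda}; you would just need to add the differentiability argument before concluding.
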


This means that $\mathcal I:\mathcal M_P(K)\to [0,\infty]$ is a lowersemicontinuous mapping such that the sublevel sets $\{\mu \in \mathcal M_P(K): \mathcal I(\mu)\leq \alpha\}$ are compact in the weak-* topology on $\mathcal M_P(K)$ for all $\alpha \geq 0$ ($\mathcal I$ is ``good'') satisfying (\ref{lowb}) and (\ref{highb}):

\begin{definition} \label{equivform}
The sequence $\{\mu_k\}$ of probability measures on $\mathcal M_P(K)$ satisfies a {\bf large deviation principle} (LDP) with good rate function $\mathcal I$ and speed $2l_n$ if for all 
measurable sets $\Gamma\subset \mathcal M_P(K)$, 
\begin{equation}\label{lowb}-\inf_{\mu \in \Gamma^0}\mathcal I(\mu)\leq \liminf_{n\to \infty} \frac{1}{2l_n} \log \mu_n(\Gamma) \ \hbox{and}\end{equation}
\begin{equation}\label{highb} \limsup_{n\to \infty} \frac{1}{2l_n} \log \mu_n(\Gamma)\leq -\inf_{\mu \in \bar \Gamma}\mathcal I(\mu).\end{equation}
\end{definition}

In the setting of $\mathcal M_P(K)$, to prove a LDP it suffices to work with a base for the weak-* topology. The following is a special case of a basic general existence result for a LDP given in Theorem 4.1.11 in \cite{DZ}.

\begin{proposition} \label{dzprop1} Let $\{\sigma_{\epsilon}\}$ be a family of probability measures on $\mathcal M_P(K)$. Let $\mathcal B$ be a base for the topology of $\mathcal M_P(K)$. For $\mu\in \mathcal M_P(K)$ let
$$\mathcal I(\mu):=-\inf_{\{G \in \mathcal B: \mu \in G\}}\bigl(\liminf_{\epsilon \to 0} \epsilon \log \sigma_{\epsilon}(G)\bigr).$$
Suppose for all $\mu\in \mathcal M_P(K)$,
$$\mathcal I(\mu)=-\inf_{\{G \in \mathcal B: \mu \in G\}}\bigl(\limsup_{\epsilon \to 0} \epsilon \log \sigma_{\epsilon}(G)\bigr).$$
Then $\{\sigma_{\epsilon}\}$ satisfies a LDP with rate function $\mathcal I(\mu)$ and speed $1/\epsilon$. 
\end{proposition}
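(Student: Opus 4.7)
The plan is to prove the abstract LDP directly from the definition, since the hypothesis is essentially tailored to produce the two LDP bounds for base sets and then transfer them to open sets and to closed (hence compact) sets. Throughout I will use that $\mathcal M_P(K)$ is metrizable and weak-$*$ compact (because $K\subset \CC^d$ is compact), so every closed subset is compact; this removes the need for any exponential tightness argument.

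First I would verify that $\mathcal I$ is lower semicontinuous and good. If $\mu_n\to\mu$ and $G\in\mathcal B$ is any base set containing $\mu$, then $\mu_n\in G$ eventually, so by definition of $\mathcal I$,
\[
-\mathcal I(\mu_n)\le \liminf_{\epsilon\to 0}\epsilon\log\sigma_\epsilon(G)
\]
eventually; taking $\limsup_n$ and then infimum over $G\ni\mu$ yields $\mathcal I(\mu)\le\liminf_n \mathcal I(\mu_n)$. Lower semicontinuity on the compact space $\mathcal M_P(K)$ automatically gives compactness of all sublevel sets, hence $\mathcal I$ is a good rate function.

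Next I would establish the lower bound \eqref{lowb}. Given a measurable $\Gamma$ and $\mu\in\Gamma^0$, since $\mathcal B$ is a base there exists $G\in\mathcal B$ with $\mu\in G\subset\Gamma^0\subset\Gamma$. Then
\[
\liminf_{\epsilon\to 0}\epsilon\log\sigma_\epsilon(\Gamma)\ge \liminf_{\epsilon\to 0}\epsilon\log\sigma_\epsilon(G)\ge -\mathcal I(\mu),
\]
using the defining infimum for $\mathcal I(\mu)$. Taking the supremum over $\mu\in\Gamma^0$ gives \eqref{lowb}. This step is routine, and is the one place where the ``$\liminf$'' half of the hypothesis is used directly.

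The main technical step is the upper bound \eqref{highb}, and this is where I would use the hypothesis that the $\liminf$ and $\limsup$ definitions of $\mathcal I$ agree. Fix $\delta>0$ and let $F=\bar\Gamma$, which is compact. For each $\mu\in F$, the hypothesis furnishes a base set $G_\mu\in\mathcal B$ with $\mu\in G_\mu$ and
\[
\limsup_{\epsilon\to 0}\epsilon\log\sigma_\epsilon(G_\mu)\le -\mathcal I(\mu)+\delta\le -\inf_{\nu\in F}\mathcal I(\nu)+\delta.
\]
(If $\inf_{\nu\in F}\mathcal I(\nu)=+\infty$ one truncates at an arbitrarily large level instead.) By compactness of $F$, extract a finite subcover $G_{\mu_1},\dots,G_{\mu_N}$, so
\[
\sigma_\epsilon(\Gamma)\le \sigma_\epsilon(F)\le \sum_{i=1}^N\sigma_\epsilon(G_{\mu_i}),
\]
and the standard ``max over finite sum'' trick for $\epsilon\log$ gives
\[
\limsup_{\epsilon\to 0}\epsilon\log\sigma_\epsilon(\Gamma)\le \max_{1\le i\le N}\limsup_{\epsilon\to 0}\epsilon\log\sigma_\epsilon(G_{\mu_i})\le -\inf_{\nu\in F}\mathcal I(\nu)+\delta.
\]
Letting $\delta\downarrow 0$ yields \eqref{highb}. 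The main obstacle is purely bookkeeping: making sure the definition of $\mathcal I$ gives a uniform choice of base neighborhood at each $\mu$ with the required $\limsup$ control, which is exactly what the hypothesis guarantees. No other delicate input is needed since compactness of $\mathcal M_P(K)$ eliminates the usual tightness issues.
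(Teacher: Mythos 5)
Your proof is correct; it is essentially the standard argument for Theorem 4.1.11 of Dembo--Zeitouni \cite{DZ}, which the paper simply cites without reproducing. Your use of compactness of $\mathcal M_P(K)$ to pass directly from the finite-subcover upper bound on compact sets to the upper bound for all closed sets (sidestepping any exponential tightness or weak-LDP intermediary) is exactly the right simplification in this setting, and the handling of the $\inf_F\mathcal I=+\infty$ case by truncation is the correct fix.
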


There is a converse to Proposition \ref{dzprop1}, Theorem 4.1.18 in \cite{DZ}. For $\mathcal M_P(K)$, it reads as follows:

\begin{proposition} \label{dzprop2} Let $\{\sigma_{\epsilon}\}$ be a family of probability measures on $\mathcal M_P(K)$. Suppose that $\{\sigma_{\epsilon}\}$ satisfies a LDP with rate function $\mathcal I(\mu)$ and speed $1/\epsilon$. Then for any base $\mathcal B$ for the topology of $\mathcal M_P(K)$  and any $\mu\in \mathcal M_P(K)$ 
$$\mathcal I(\mu):=-\inf_{\{G \in \mathcal B: \mu \in G\}}\bigl(\liminf_{\epsilon \to 0} \epsilon \log \sigma_{\epsilon}(G)\bigr)$$
$$=-\inf_{\{G \in \mathcal B: \mu \in G\}}\bigl(\limsup_{\epsilon \to 0} \epsilon \log \sigma_{\epsilon}(G)\bigr).$$ 
\end{proposition}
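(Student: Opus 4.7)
The plan is to show both equalities simultaneously by sandwiching. Write
$$a_G := \liminf_{\epsilon \to 0} \epsilon \log \sigma_\epsilon(G), \qquad b_G := \limsup_{\epsilon \to 0} \epsilon \log \sigma_\epsilon(G),$$
so $a_G \leq b_G$ for every $G$, and set
$$L_-(\mu) := -\inf_{G \in \mathcal B,\, \mu \in G} a_G, \qquad L_+(\mu) := -\inf_{G \in \mathcal B,\, \mu \in G} b_G.$$
Taking the infimum over $G$ in the inequality $a_G \leq b_G$ immediately gives $L_-(\mu) \geq L_+(\mu)$, so it suffices to prove the two bounds $L_+(\mu) \leq \mathcal I(\mu) \leq L_+(\mu)$ (the $L_-$ counterpart of the first comes for free, and the $L_-$ counterpart of the second follows from $L_- \geq L_+$).

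\textbf{Upper bound on $L_-(\mu)$ and $L_+(\mu)$ via \eqref{lowb}.} For any $G \in \mathcal B$ containing $\mu$, $G$ is open, so the lower bound of the LDP gives
$$a_G \geq -\inf_{\nu \in G} \mathcal I(\nu) \geq -\mathcal I(\mu),$$
the last step because $\mu \in G$. Hence $\inf_G a_G \geq -\mathcal I(\mu)$, i.e.\ $L_-(\mu) \leq \mathcal I(\mu)$, and since $b_G \geq a_G$ we also obtain $L_+(\mu) \leq \mathcal I(\mu)$.

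\textbf{Lower bound on $L_+(\mu)$ via \eqref{highb} and lower semicontinuity.} Fix $\delta > 0$. Since $\mathcal I$ is lower semicontinuous, the sublevel set $\{\nu : \mathcal I(\nu) \leq \mathcal I(\mu) - \delta\}$ is closed and does not contain $\mu$; hence there is an open neighborhood $V$ of $\mu$ on which $\mathcal I > \mathcal I(\mu) - \delta$. The space $\mathcal M_P(K)$ is metrizable, hence regular, so I can choose an open $V'$ with $\mu \in V' \subset \overline{V'} \subset V$, and then pick $G \in \mathcal B$ with $\mu \in G \subset V'$. Then $\overline G \subset \overline{V'} \subset V$, so $\inf_{\nu \in \overline G} \mathcal I(\nu) \geq \mathcal I(\mu) - \delta$. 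Applying the upper bound of the LDP to the closed set $\overline G$:
$$b_G \leq \limsup_{\epsilon \to 0} \epsilon \log \sigma_\epsilon(\overline G) \leq -\inf_{\nu \in \overline G} \mathcal I(\nu) \leq -\mathcal I(\mu) + \delta.$$
Therefore $\inf_{G' \in \mathcal B,\, \mu \in G'} b_{G'} \leq -\mathcal I(\mu) + \delta$, and letting $\delta \to 0$ gives $L_+(\mu) \geq \mathcal I(\mu)$. Combining with Step 1 yields $L_+(\mu) = \mathcal I(\mu)$; then $L_-(\mu) \geq L_+(\mu) = \mathcal I(\mu) \geq L_-(\mu)$ gives $L_-(\mu) = \mathcal I(\mu)$ as well.

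The only delicate point is the bridging between a base element $G$ and its closure $\overline G$ in the second step: the upper bound of the LDP is valid for the closed set $\overline G$, but the claim is phrased in terms of $G$. This is handled by the interposition $\mu \in G \subset \overline G \subset V$, which uses the regularity of the metrizable space $\mathcal M_P(K)$ together with lower semicontinuity of $\mathcal I$ to control $\mathcal I$ uniformly on $\overline G$. Everything else is a bookkeeping exercise with $\liminf/\limsup$ and infima over neighborhood bases.
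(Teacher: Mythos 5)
Your proof is correct. The paper itself offers no proof of this proposition; it simply cites Theorem 4.1.18 of Dembo--Zeitouni \cite{DZ}, and your argument is a faithful reconstruction of the standard one from that reference: the lower bound of the LDP on the open set $G$ yields $L_\pm(\mu) \leq \mathcal I(\mu)$, while lower semicontinuity of $\mathcal I$ together with regularity of the (metrizable) space $\mathcal M_P(K)$ lets you trap $\overline G$ inside a set on which $\mathcal I$ is nearly $\mathcal I(\mu)$, so the upper bound of the LDP on $\overline G$ gives $L_+(\mu) \geq \mathcal I(\mu)$. The one cosmetic omission is the case $\mathcal I(\mu)=+\infty$, where ``$\mathcal I(\mu)-\delta$'' should be replaced by an arbitrary level $M$; the sublevel set $\{\mathcal I \leq M\}$ is still closed and avoids $\mu$, and the same sandwich gives $L_+(\mu) \geq M$ for all $M$, hence $L_+(\mu)=+\infty$. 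Also, the sentence ``it suffices to prove the two bounds $L_+(\mu) \leq \mathcal I(\mu) \leq L_+(\mu)$'' is a slip of the pen — you mean the two separate inequalities $L_+(\mu) \leq \mathcal I(\mu)$ and $\mathcal I(\mu) \leq L_+(\mu)$ — but the intent and the subsequent steps are clear and correct.
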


\begin{remark} \label{equival} Assuming Theorem \ref{ldp}, this shows that, starting with a strong Bernstein-Markov measure $\nu$ and the corresponding sequence of probability measures $\{\sigma_n\}$ on $\mathcal M_P(K)$ in (\ref{sigmak}), the existence of 
an LDP with rate function $\mathcal I(\mu)$ and speed $2l_n$ implies that
necessarily
\begin{equation} \label{ourrate} \mathcal I(\mu)=\log J^Q(\mu_{K,Q})-\log J^Q(\mu).\end{equation} 
Uniqueness of the rate function is basic (cf., Lemma 4.1.4 of \cite{DZ}). \end{remark}

We turn to the first proof of Theorem \ref{ldp}, using Theorem \ref{obsolete}, which gives a pluripotential theoretic description of the rate functional. 

\begin{proof} As a base $\mathcal B$ for the topology of $\mathcal M_P(K)$, we can take the sets from (\ref{nbhdbase}) or simply all open sets. For $\{\sigma_{\epsilon}\}$, we take the sequence of probability measures $\{\sigma_n\}$ on $\mathcal M_P(K)$ and we take $\epsilon =\frac{1}{2l_n}$. For $G\in \mathcal B$, from (\ref{jandsigma}),
$$\frac{1}{2l_n}\log \sigma_n(G)= \log J_n^Q(G)-\frac{1}{2l_n}\log Z_n.$$
From Proposition \ref{weightedtd}, and (\ref{jeqn}) with $v=Q$, 
$$\lim_{n\to \infty} \frac{1}{2l_n}\log Z_n=\log  \delta^Q(K)= \log J^Q(\mu_{K,Q});$$ and by Theorem \ref{obsolete}, 
$$\inf_{G \ni \mu} \limsup_{n\to \infty} \log J_n^Q(G)=\inf_{G \ni \mu} \liminf_{n\to \infty} \log J_n^Q(G)=\log J^Q(\mu).$$

Thus by Proposition \ref{dzprop1}  $\{\sigma_n\}$ satisfies an LDP with rate function 
$$\mathcal I(\mu):=\log J^Q(\mu_{K,Q})-\log J^Q(\mu)$$
and speed $2l_n$. This rate function is good since $\mathcal M_P(K)$ is compact.
\end{proof}

\begin{remark} \label{hope} From Proposition \ref{prop43}, $\mu_{K,Q}$ is the unique maximizer of the functional 
$$\mu \to \log J^Q(\mu) $$
over all $\mu\in {\mathcal M}_P(K)$. Thus
$$\mathcal I_{K,Q}(\mu)\geq 0 \ \hbox{with} \ \mathcal I_{K,Q}(\mu)= 0 \iff \mu=\mu_{K,Q}. $$
To summarize, $\mathcal I_{K,Q}$ is a good rate function with unique minimizer $\mu_{K,Q}$. Using the relations 
$$\log  J(\mu)= -b_d\sup_{Q\in  C(K)} [E(V_{P,K,Q}^*) -\int_K Qd\mu]$$
$$J(\mu)= J^Q(\mu)\cdot (e^{\int_K Qd\mu})^{b_d}, \ \hbox{and} \ J^Q(\mu_{K,Q})=\delta^Q(K)$$
(the latter from (\ref{check})), we have
$$\mathcal I(\mu):=\log \delta^Q(K)-\log J^Q(\mu)$$
$$=\log \delta^Q(K)-\log J(\mu)+b_d\int_K Qd\mu$$
$$=b_d\sup_{Q\in  C(K)} [E(V_{P,K,Q}^*) -\int_K Qd\mu]+\log \delta^Q(K)+b_d\int_K Qd\mu$$
$$=b_d\sup_{v\in  C(K)} [E(V_{P,K,v}^*)-\int_K vd\mu]- b_d[E(V_{P,K,Q}^*)-\int_K Qd\mu]$$
from (\ref{enrum2}).  
\end{remark}

The second proof of our LDP follows from Corollary 4.6.14 in \cite{DZ}, which is a general version of the G\"artner-Ellis theorem. This approach was originally brought to our attention by S. Boucksom and was also utilized by R. Berman in \cite{Ber}. We state the version of the \cite{DZ} result for an appropriate family of probability measures.

\begin{proposition} \label{gartell}
Let $C(K)^*$ be the topological dual of $C(K)$, and let $\{\sigma_{\epsilon}\}$ be a family of probability measures on ${\mathcal M}_P(K)\subset C(K)^*$ (equipped with the weak-* topology). Suppose for each $\lambda \in C(K)$, the limit
$$\Lambda(\lambda):=\lim_{\epsilon \to 0} \epsilon \log \int_{C(K)^*} e^{\lambda(x)/\epsilon}d\sigma_{\epsilon}(x)$$
exists as a finite real number and assume $\Lambda$ is G\^ateaux differentiable; i.e., for each $\lambda, \theta\in C(K)$, the function $f(t):= \Lambda(\lambda +t\theta)$ is differentiable at $t=0$. Then $\{\sigma_{\epsilon}\}$ satisfies an LDP in $C(K)^*$ with the convex, good rate function $\Lambda^*$.
\end{proposition}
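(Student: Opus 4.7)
The plan is to prove the two halves of the LDP separately, following the strategy of the classical G\"artner-Ellis theorem adapted to the infinite-dimensional dual space $C(K)^*$. A helpful preliminary observation is that $\mathcal{M}_P(K)$ is weak-* compact by Banach-Alaoglu since the total mass is fixed at $\gamma_d$, so the $\sigma_\epsilon$ effectively live on a compact metrizable space, trivializing the usual exponential tightness hypothesis.

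For the upper bound on a closed set $F$, the starting point is exponential Chebyshev: for any $\lambda \in C(K)$ and $\alpha \in \RR$,
$$\sigma_\epsilon(\{x : \lambda(x) \geq \alpha\}) \leq e^{-\alpha/\epsilon}\int e^{\lambda(x)/\epsilon} d\sigma_\epsilon(x),$$
giving $\limsup_{\epsilon \to 0} \epsilon \log \sigma_\epsilon(\{\lambda(x) \geq \alpha\}) \leq \Lambda(\lambda) - \alpha$. Optimizing over $(\lambda,\alpha)$ yields the rate $-\Lambda^*$ on any closed half-space; to pass to arbitrary closed $F \subset \mathcal{M}_P(K)$, I would cover $F$ by finitely many such half-spaces using compactness and Hahn-Banach separation of $F$ from the relevant sublevel sets of $\Lambda^*$ up to prescribed precision. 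For the lower bound on an open $G$, the device is exponential tilting: for $\lambda \in C(K)$ set
$$Z_\epsilon(\lambda) := \int e^{\lambda(x)/\epsilon} d\sigma_\epsilon(x), \qquad d\sigma_\epsilon^\lambda := Z_\epsilon(\lambda)^{-1} e^{\lambda/\epsilon}\, d\sigma_\epsilon.$$
A standard calculation using $\epsilon \log Z_\epsilon(\lambda) \to \Lambda(\lambda)$ shows that as $\epsilon \to 0$, the tilted measures $\sigma_\epsilon^\lambda$ concentrate weak-* on the point $y_\lambda \in \mathcal{M}_P(K)$ determined by $\theta(y_\lambda) = \frac{d}{dt}\bigl|_{t=0} \Lambda(\lambda + t\theta)$ for all $\theta \in C(K)$; convex duality identifies $y_\lambda$ as an exposed point of $\Lambda^*$ satisfying $\Lambda^*(y_\lambda) = \lambda(y_\lambda) - \Lambda(\lambda)$. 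For $y_\lambda \in G$, choosing a subneighborhood $G' \subset G$ on which $\lambda(x)$ is within $o(1)$ of $\lambda(y_\lambda)$ and writing
$$\sigma_\epsilon(G) \geq Z_\epsilon(\lambda)\, e^{-(\lambda(y_\lambda) + o(1))/\epsilon} \sigma_\epsilon^\lambda(G'),$$
then taking $\epsilon \log$ and using $\sigma_\epsilon^\lambda(G') \to 1$ gives $\liminf \epsilon \log \sigma_\epsilon(G) \geq -\Lambda^*(y_\lambda)$.

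The main technical obstacle is ensuring that the collection $\{y_\lambda : \lambda \in C(K)\}$ of exposed points is rich enough to recover $-\inf_{y \in G} \Lambda^*(y)$ for every open $G$ and every $y \in G$, not just at algebraically privileged points. This is precisely the role of Gâteaux differentiability of $\Lambda$ throughout $C(K)$: every $\lambda$ produces a well-defined exposing functional $y_\lambda$, and by convex duality the image $\lambda \mapsto y_\lambda$ hits every point of the effective domain of $\Lambda^*$ at which a supporting affine functional exists, a set dense enough to reach $\inf_G \Lambda^*$ by lower semicontinuity and approximation. Goodness of $\Lambda^*$ then comes for free: as a Legendre transform it is automatically convex and weak-* lower semicontinuous, and its sublevel sets are contained in the weak-* compact set $\mathcal{M}_P(K)$.
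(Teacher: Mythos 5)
The paper offers no proof of this proposition at all: it is stated purely as an invocation of Corollary~4.6.14 of Dembo--Zeitouni, a standard form of the abstract G\"artner--Ellis (Baldi) theorem, whose hypotheses are then verified in the second proof of Theorem~\ref{ldp}. So you are not reproducing the paper's argument --- you are re-deriving a cited textbook theorem, which is a legitimate but different enterprise, and it is worth being honest about where the sketch is and is not complete.

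Your upper-bound argument (Chebyshev on half-spaces, then a finite subcover of the compact set $F$ extracted so that each half-space misses the sublevel set $\{\Lambda^*\le \alpha\}$) is sound; compactness of $\mathcal M_P(K)$ makes exponential tightness automatic, as you say. The lower-bound sketch, however, has a genuine gap in the step you yourself flag as ``the main technical obstacle.'' Two separate things are asserted without proof. First, the claim that the tilted measures $\sigma_\epsilon^\lambda$ concentrate at $y_\lambda$: this is not a ``standard calculation'' --- it requires showing $\sigma_\epsilon^\lambda(G')\to 1$, which in turn needs the LDP \emph{upper} bound applied to the tilted family together with the fact that $y_\lambda$ is the \emph{unique} minimizer of the tilted rate function $\Lambda^*(\cdot)-\langle\lambda,\cdot\rangle + \Lambda(\lambda)$; that uniqueness is exactly what G\^ateaux differentiability buys you, and it needs to be argued. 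Second, and more seriously, your statement that the map $\lambda\mapsto y_\lambda$ ``hits every point of the effective domain of $\Lambda^*$ at which a supporting affine functional exists, a set dense enough to reach $\inf_G\Lambda^*$'' is not correct as written and is not a routine consequence of lower semicontinuity. In infinite dimensions the range of $\nabla\Lambda$ need not cover, nor even be dense in, $\partial$-dom$(\Lambda^*)$ in any naive sense; what one actually proves (this is DZ Lemma~4.5.21, the technical heart of Baldi's theorem) is that for every $y$ with $\Lambda^*(y)<\infty$ and every neighborhood $G$ of $y$, one can find exposed points $z\in G$ with exposing hyperplane in $C(K)$ and $\Lambda^*(z)$ arbitrarily close to $\Lambda^*(y)$. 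The proof of that lemma is a separation argument using essential smoothness/G\^ateaux differentiability of $\Lambda$ and is not a one-line density observation. Without establishing this lemma (or citing it), the lower bound only holds on neighborhoods of exposed points, which is strictly weaker than what the proposition claims.

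In short: your strategy is the standard one, and would work if carried out, but as written it leaves unproved precisely the two points that make the infinite-dimensional G\"artner--Ellis theorem nontrivial. Given that the paper simply cites \cite{DZ}, the cleanest route is to do the same; if you want a self-contained proof you need to supply the exposed-point approximation lemma and justify the tilted-measure concentration rigorously.
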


Here $$\Lambda^*(x):= \sup_{\lambda \in C(K)}\bigl(<\lambda,x>- \Lambda(\lambda)\bigr),$$ is the Legendre transform of $\Lambda$. The upper bound (\ref{highb}) in the LDP holds with rate function $\Lambda^*$ under the assumption that the limit $\Lambda(\lambda)$ exists and is finite; the G\^ateaux differentiability of $\Lambda$ is needed for the lower bound (\ref{lowb}). To verify this property in our setting, we must recall a result from \cite{BBL}. 

\begin{proposition}
\label{diffpropcor}
For  $Q\in \mathcal A(K)$ and $u\in C(K)$, let 
$$F(t):=E(V_{P,K,Q+tu}^*)$$
for $t\in \RR$. Then $F$ is differentiable and
$$
F'(t)= \int_{\CC^d} u (dd^c V_{P,K,Q+tu}^*)^d.
$$
\end{proposition}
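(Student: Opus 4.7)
My plan is to combine three ingredients: a uniform Lipschitz estimate in $t$ for the extremal functions $V_t := V_{P,K,Q+tu}^*$, the concavity of $E$ along affine curves (which, together with (\ref{diff02}), gives a two-sided difference-quotient inequality), and the quasi-everywhere identity $V_t = Q+tu$ on $\text{supp}((dd^c V_t)^d)\subset K$ recorded in Proposition 2.5 of \cite{BBL}.

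First I would check that $|V_t - V_s| \leq |t-s|\,\|u\|_K$ on $\CC^d$. If $v\in L_P(\CC^d)$ is a competitor in the sup defining $V_{P,K,Q+tu}$, then $v - |t-s|\,\|u\|_K \leq Q+su$ on $K$, so $v - |t-s|\,\|u\|_K$ is a competitor for $V_{P,K,Q+su}$; taking the sup and usc regularization and swapping the roles of $s$ and $t$ yields the estimate. In particular $t\mapsto V_t$ is uniformly continuous on $\CC^d$ in the parameter.

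Next, since $E$ is concave along affine curves with derivative given by (\ref{diff02}), comparison of a concave function with its tangent lines at both endpoints of $[t,t+h]$ yields the sandwich
\[
\int_{\CC^d}(V_{t+h}-V_t)(dd^c V_{t+h})^d \;\leq\; F(t+h)-F(t) \;\leq\; \int_{\CC^d}(V_{t+h}-V_t)(dd^c V_t)^d.
\]
Proposition 2.5 of \cite{BBL} gives $V_t = Q+tu$ q.e.\ on $\text{supp}((dd^c V_t)^d)\subset K$; since $V_{t+h}\leq Q+(t+h)u$ q.e.\ on $K$ and Monge-Amp\`ere measures do not charge pluripolar sets, we get $V_{t+h}-V_t \leq hu$ a.e.-$(dd^c V_t)^d$, and symmetrically $V_{t+h}-V_t \geq hu$ a.e.-$(dd^c V_{t+h})^d$. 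For $h>0$ the sandwich becomes
\[
h\!\int_K u\,(dd^c V_{t+h})^d \;\leq\; F(t+h)-F(t) \;\leq\; h\!\int_K u\,(dd^c V_t)^d,
\]
with the inequalities reversed for $h<0$.

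It remains to divide by $h$ and pass to the limit. The uniform convergence $V_{t+h}\to V_t$ on $\CC^d$ from the Lipschitz step, combined with the fact that on any bounded parameter interval the $V_t$ lie uniformly in $L_{P,+}(\CC^d)$ (as $Q$ and $u$ are bounded on $K$), places us in the Bedford--Taylor setting of locally uniform convergence of locally bounded psh functions, so $(dd^c V_{t+h})^d \to (dd^c V_t)^d$ weak-$*$. Since $u\in C(K)$ and all these measures are supported in $K$, the sandwich forces $F'(t)=\int_{\CC^d} u\,(dd^c V_t)^d$. The principal obstacle is the q.e.\ identification $V_t=Q+tu$ on the support of $(dd^c V_t)^d$, which is precisely where the $P$-pluripotential theory is doing real work, but this is exactly the content of Proposition 2.5 of \cite{BBL}; a minor secondary issue is making sense of $\int_{\CC^d} u (dd^c V_t)^d$ for $u$ defined only on $K$, which is harmless because the measure lives on $K$.
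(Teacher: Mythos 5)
Your proof is correct and follows essentially the same route as the argument the paper invokes (Theorem~11.11 of \cite{GZ}, due to Lu--Nguyen, and the same structure appears in \cite[Proposition 4.20]{DDL}): the uniform Lipschitz bound $|V_{t+h}-V_t|\leq |h|\,\|u\|_K$, the two-sided concavity estimate coming from the affine-curve differentiability of $E$ recorded in (\ref{diff02}), the q.e.\ identification $V_t=Q+tu$ on $\mathrm{supp}((dd^cV_t)^d)$ from Proposition~2.5 of \cite{BBL}, and Bedford--Taylor weak-$*$ convergence of the Monge--Amp\`ere measures under uniform convergence of locally bounded $L_{P,+}$ potentials. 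The one small slip is cosmetic: the tangent-line comparison is taken at the endpoints of the segment $s\in[0,1]$ joining $V_t$ and $V_{t+h}$, not at the endpoints of $[t,t+h]$, but the inequalities you write down are the correct ones.
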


\noindent In \cite{BBL} it was assumed that $u\in C^2(K)$ but the result is true with the weaker assumption $u\in C(K)$ (cf., Theorem 11.11 in \cite{GZ} due to Lu and Nguyen \cite{LN}, see also \cite[Proposition 4.20]{DDL}).

We proceed with the second proof of Theorem \ref{ldp}. For simplicity, we normalize so that $\gamma_d =1$ to fit the setting of Proposition \ref{gartell} (so members of  ${\mathcal M}_P(K)$ are probability measures).

\begin{proof} We show that for each $v\in C(K)$,
$$\Lambda(v):=\lim_{n \to \infty} \frac{1}{2l_n
}\log  \int_{C(K)^*} e^{2l_n
<v,\mu>}d\sigma_{n}(\mu)$$ exists as a finite real number. First, since $\sigma_n$ is a measure on ${\mathcal M}_P(K)$, the integral can be taken over ${\mathcal M}_P(K)$. Consider
$$\frac{1}{2l_n
}\log  \int_{{\mathcal M}_P(K)} e^{2l_n
<v,\mu>}d\sigma_{n}(\mu).$$
By (\ref{lambdaint}), this is equal to
$$\frac{1}{2l_n
}\log  \frac{1}{Z_n}\cdot \int_{K^{d_n
}}  |VDM_n^{Q-\frac{l_n}{nd_n}v}(x_1,...,x_{d_n
})|^2 d\nu(x_1) \cdots d\nu(x_{d_n
}).$$
From (\ref{key2}), with $\gamma_d=1$, $\frac{l_n}{nd_n}\to \frac{1}{b_d}$; hence for any $\epsilon >0$, 
$$\frac{1}{b_d+\epsilon}v \leq \frac{l_n}{nd_n}v\leq \frac{1}{b_d-\epsilon}v \ \hbox{on} \ K$$
for $n$ sufficiently large. Recall that
$$Z_n= \int_{K^{d_n
}} |VDM_n^Q(x_1,...,x_{d_n
}))|^2d\nu(x_1) \cdots d\nu(x_{d_n
}).$$
Define
$$\tilde Z_n:= \int_{K^{d_n
}}  |VDM_n^{Q-v/b_d}(x_1,...,x_{d_n
})|^2 d\nu(x_1) \cdots d\nu(x_{d_n
}).$$
Then we have
$$\lim_{n\to \infty} \tilde Z_n^{\frac{1}{2l_n
}}=  \delta^{Q-v/b_d} (K) \ \hbox{and} \ \lim_{n\to \infty}  Z_n^{\frac{1}{2l_n
}}=  \delta^Q (K) $$
from (\ref{zeen}) in Proposition \ref{weightedtd} and the assumption that $(K,\nu,\tilde Q)$ satisfies the weighted Bernstein-Markov property for {\it all} $\tilde Q\in  C(K)$. Thus
\begin{equation}\label{lambda}\Lambda(v)= \lim_{n \to \infty} \frac{1}{2l_n
}\log \frac{\tilde Z_n}{Z_n}= \log \frac{ \delta^{Q-v/b_d} (K)}{  \delta^{Q} (K)} .\end{equation}
Define now, for $v, v'\in C(K)$, 
$$f(t):=  E(V^*_{P,K,Q-(v+tv')}).$$
Proposition \ref{diffpropcor} shows that $\Lambda$ is G\^ateaux
 differentiable and Proposition \ref{gartell} gives that $\Lambda^*$ is a rate function on $C(K)^*$.
 
 Since each $\sigma_n$ has support in $\mathcal M_P(K)$, it follows from (\ref{lowb}) and (\ref{highb}) in Definition \ref{equivform} of an LDP with $\Gamma \subset C(K)^*$ that for $\mu\in C(K)^*\setminus \mathcal M_P(K)$, $\Lambda^*(\mu)=+\infty$. By Lemma 4.1.5 (b) of \cite{DZ}, the restriction of $\Lambda^*$ to $\mathcal M_P(K)$ is a rate function. Since $\mathcal M_P(K)$ is compact, it is a good rate function. Being a Legendre transform, $\Lambda^*$ is convex.  

To compute $\Lambda^*$, we have, using (\ref{lambda}) and (\ref{enrum}),
$$\Lambda^*(\mu)= \sup_{v\in C(K)} \bigl( \int_K v d\mu -  \log \frac{ \delta^{Q-v/b_d} (K)}{  \delta^{Q} (K)}\bigr)$$
$$=\sup_{v\in C(K)} \bigl( \int_K v d\mu - b_d[E(V^*_{P,K,Q})-E(V^*_{P,K,Q-v/b_d}])\bigr).$$
Thus $$\Lambda^*(\mu)+b_dE(V^*_{P,K,Q})= \sup_{v\in C(K)} \bigl( \int_K v d\mu +b_dE(V^*_{P,K,Q-v/b_d})\bigr)$$
 $$= \sup_{u\in C(K)} \bigl( b_dE(V^*_{P,K,Q+u})-b_d\int_K u d\mu \bigr) \ (\hbox{taking} \ u=-v/b_d).$$
Rearranging and replacing $u$ in the supremum by $v=u+Q$,
$$\Lambda^*(\mu)=  \sup_{u\in C(K)} \bigl( b_dE(V^*_{P,K,Q+u})-b_d\int_K u d\mu \bigr)-b_dE(V^*_{P,K,Q})$$
$$= b_d\bigl[\sup_{v\in C(K)} E(V^*_{P,K,v})-\int_K v d\mu \bigr]-b_d\bigl[E(V^*_{P,K,Q})-\int_K Q d\mu\bigr]$$
which agrees with the formula in Remark \ref{hope} (since $\mu$ is a probability measure).

\end{proof}

\begin{remark} Thus the rate function can be expressed in several equivalent ways:
$$\mathcal I(\mu)= \Lambda^*(\mu)=\log J^Q(\mu_{K,Q})-\log J^Q(\mu)$$
$$= b_d\bigl[\sup_{v\in C(K)} E(V^*_{P,K,v})-\int_K v d\mu \bigr]-b_d\bigl[E(V^*_{P,K,Q})-\int_K Q d\mu\bigr]$$
$$=b_d E^*(\mu)-b_d\bigl[E(V^*_{P,K,Q})-\int_K Q d\mu\bigr]$$
which generalizes the result equating (5.3), (5.10) and (5.11) in \cite{PELD} for the case $P=\Sigma$ and $b_d=1$. Note in the last equality we are using the slightly different notion of $E^*$ in (\ref{estar2}) and Proposition \ref{gap} than that used in \cite{PELD}.

\end{remark}


\begin{thebibliography}{99}

\bibitem {Bay} T. Bayraktar, Zero distribution of random sparse polynomials, \emph{Mich. Math. J.}, \textbf{66}, (2017), no. 2, 389--419.

\bibitem {BBL} T. Bayraktar, T. Bloom,  N. Levenberg, Pluripotential theory and convex bodies, \emph{Mat. Sbornik}, \textbf{209} (2018), no. 3, 67--101. 

\bibitem{BT} E. Bedford and B. A. Taylor, B. A., The {D}irichlet problem for a complex {M}onge-{A}mp\`ere equation, \emph{Invent. Math.}, \textbf{37}, (1976), no. 1, 1--44.

\bibitem{BT82} E. Bedford and B. A. Taylor, A new capacity for plurisubharmonic functions. 
\emph{Acta Math.}, \textbf{149}, (1982), no. 1-2, 1--40. 

\bibitem {Ber} R. Berman, Determinantal Point Processes and Fermions on Complex Manifolds: Large Deviations and Bosonization,  \emph{Comm. Math. Phys.}, \textbf{327} (2014), no. 1, 1--47.
 
 \bibitem {BB}  R. Berman and S. Boucksom, Growth of balls of
holomorphic sections and energy at equilibrium, \emph{Invent. Math.}, \textbf{181}, (2010), 337-394. 

\bibitem{BBGZ} R. Berman, S. Boucksom, V. Guedj and A. Zeriahi, A variational approach to complex Monge-Amp\`ere
equations, \emph{Publ. Math. de l'IH\'ES}, \textbf{117}, (2013), 179-245. 





\bibitem{PE} T. Bloom and N. Levenberg, Pluripotential energy, \emph{Potential Analysis}, \textbf{36}, no. 1, 155-176, 2012. 

\bibitem{PELD} T. Bloom and N. Levenberg, Pluripotential energy and large deviation, \emph{Indiana Univ. Math. J.}, \textbf{62}, no. 2, 523-550, 2013. 




\bibitem{[BEGZ]} S. Boucksom, P. Eyssidieux, V. Guedj and A. Zeriahi, Monge-Amp\`ere equations in big cohomology classes, \emph{Acta Math.}, \textbf{205} (2010), 199-262.





\bibitem{DDL} T. Darvas, E. Di Nezza, and C. H. Lu, Monotonicity of nonpluripolar products and complex
Monge-Amp\`ere equations with prescribed singularity, \emph{Analysis \& PDE}, \textbf{11}, (2018), no. 8, 2049-2087.

\bibitem{DDLnew} T. Darvas, E. Di Nezza, and C. H. Lu, Log-concavity of volume and complex Monge-Amp\`ere 
equations with prescribed singularity, arXiv:1807.00276. 

\bibitem{DZ} A. Dembo and O. Zeitouni, \emph{Large deviations techniques and applications},  Jones and Bartlett Publishers, Boston, MA, 1993.

\bibitem{GZ05} V. Guedj and A. Zeriahi, Intrinsic capacities on compact {K}\"ahler manifolds, \emph{J. Geom. Anal.}, \textbf{15} (2005), no. 4, 607-639.

\bibitem{GZ07} V. Guedj and A.  Zeriahi, The weighted {M}onge-{A}mp\`ere energy of quasiplurisubharmonic functions, \emph {J. Funct. Anal.}, \textbf{250} (2007), no. 2, 442-482.

\bibitem {GZ} V. Guedj and A.  Zeriahi, \emph{Degenerate Complex Monge-Amp\`ere Equations}, European Math. Soc. Tracts in Mathematics Vol. 26, 2017.

\bibitem{LN} C.H. Lu, V.D. Nguyen, Degenerate complex Hessian equations on compact K\"ahler manifolds, \emph{Indiana Univ. Math. J.}, \textbf{64} (2015), no. 6, 1721--1745.

\bibitem{RWN} J. Ross and D. Witt Nystr\"om, Analytic test configurations and geodesic rays, \emph{J. Symplectic Geom.}, \textbf{12} (2014), no. 1, 125-169.



\bibitem {ST} E. Saff and V. Totik, \emph{Logarithmic potentials with external fields}, Springer-Verlag, Berlin, 1997.

\bibitem{WN} David Witt Nystr{\"o}m, Monotonicity of nonpluripolar Monge-Amp\`ere masses, arXiv:1703.01950. To appear in \emph{Indiana University Mathematics Journal}. 
 

\end{thebibliography}
 \end{document}